\tikzset{every picture/.style={line width=0.75pt}} 
\newtheorem{Thx}{Theorem}
\newtheorem{Propx}[Thx]{Proposition}
\newtheorem{theorem}{Theorem}[section]
\newtheorem{corollary}[theorem]{Corollary}
\newtheorem{proposition}[theorem]{Proposition}
\theoremstyle{definition}
\newtheorem{question}{Question}
\newtheorem{remark}[theorem]{Remark}
\newtheorem{example}[theorem]{Example}
\newcommand{\divides}{\mid}
\newcommand{\ndivides}{\nmid}
\newcommand{\cF}{{\mathcal F}}
\newcommand{\cM}{{\mathcal M}}
\newcommand{\N}{{\mathbbm N}}
\newcommand{\Z}{{\mathbbm Z}}
\newcommand{\PP}{{\mathbbm P}}
\newcommand{\vep}{\varepsilon}
\newcommand{\beq}{\begin{equation}}
\newcommand{\eeq}{\end{equation}}
\def\moverlay{\mathpalette\mov@rlay}
\def\mov@rlay#1#2{\leavevmode\vtop{%
   \baselineskip\z@skip \lineskiplimit-\maxdimen
   \ialign{\hfil$\m@th#1##$\hfil\cr#2\crcr}}}
\newcommand{\charfusion}[3][\mathord]{
    #1{\ifx#1\mathop\vphantom{#2}\fi
        \mathpalette\mov@rlay{#2\cr#3}
      }
    \ifx#1\mathop\expandafter\displaylimits\fi}
\providecommand{\noopsort}[1]{}
\author{Aurelia Dymek \and Stanisław Kasjan  \and Joanna Kułaga-Przymus}
\title{A note on $\mathscr{B}$-free sets and the existence of natural density}
\begin{document}
\maketitle
\abstract{Given $\mathscr{B}\subseteq \mathbb{N}$, let $\mathcal{M}_\mathscr{B}=\bigcup_{b\in\mathscr{B}}b\mathbb{Z}$ be the correspoding set of multiples. We say that $\mathscr{B}$ is taut if the logarithmic density of $\mathcal{M}_\mathscr{B}$ decreases after removing any element from $\mathscr{B}$. We say that $\mathscr{B}$ is minimal if it is primitive (i.e.\ $b\divides b'$ for $b,b'\in\mathscr{B}$ implies $b=b'$) and the characteristic function $\eta$ of $\mathcal{M}_\mathscr{B}$ is a Toeplitz sequence (i.e.\ for every $n\in \mathbb{N}$ there exists $s_n$ such that $\eta$ is constant along $n+s_n\Z$). With every $\mathscr{B}$ one associates the corresponding taut set $\mathscr{B}'$ (determined uniquely among all taut sets by the condition that the
associated Mirsky measures agree) and the minimal set $\mathscr{B}^*$ (determined uniquely among all minimal sets by the condition that every configuration appearing on $\mathcal{M}_{\mathscr{B}^*}$ appears on $\mathcal{M}_\mathscr{B}$: for every $n\in \N$, there exists $k\in \Z$ such that $\mathcal{M}_{\mathscr{B}^*}\cap [0,n]=\mathcal{M}_\mathscr{B} \cap[k,k+n]-k$). Besicovitch \cite{MR1512943} gave an example of $\mathscr{B}$ whose set of multiples does not have the natural density. It was proved in \cite[Lemma 4.18]{MR3803141} that if $\mathcal{M}_{\mathscr{B}'}$ posses the natural density then so does $\mathcal{M}_\mathscr{B}$. In this paper we show that this is the only obstruction: every configuration $ijk\in \{0,1\}^3$ (with $ij\neq 01$), encoding the information on the existence of the natural density for the triple $\mathcal{M}_\mathscr{B},\mathcal{M}_{\mathscr{B'}},\mathcal{M}_{\mathscr{B}^*}$, can occur. Furthermore, we show that $\mathcal{M}_\mathscr{B}$ and $\mathcal{M}_{\mathscr{B}'}$ can differ along a set of positive upper density.}


\section{Introduction}
\paragraph{Sets of multiples}
Given $A\subseteq \Z$ satisfying $A=-A$ (or simply $A\subseteq \N$), let $\delta(A)$, $d(A)$, $\underline{d}(A)$ and $\overline{d}(A)$ stand, respectively, for the logarithmic density, natural density, lower natural density and upper natural density of $A\cap \N$:
\[
\delta(A)=\lim_{n\to\infty}\frac{1}{\ln n}\sum_{k\leq n, k\in A}\frac{1}{k} \text{ and }d(A)=\lim_{n\to\infty}\frac{1}{n}|A\cap [1,n]|
\]
(with obvious modifications for $\underline{d}$ and $\overline{d}$). Let $\mathscr{B}\subseteq \mathbb{N}$ and let $\mathcal{M}_\mathscr{B}=\bigcup_{b\in\mathscr{B}}b\Z$ and $\mathcal{F}_\mathscr{B}=\Z\setminus \mathcal{M}_{\mathscr{B}}$ be the corresponding set of multiples and the $\mathscr{B}$-free set. The most prominent example of a $\mathscr{B}$-free set arises for $\mathscr{B}$ being the squares of all primes. Then $\mathcal{F}_\mathscr{B}$ is the so-called square-free set and its characteristic function $\mathbf{1}_{\mathcal{F}_\mathscr{B}}\in \{0,1\}^\Z$ restricted to~$\mathbb{N}$ is equal to the square of the M\"{o}bius function.

Davenport and Erd\H{o}s~\cite{MR43835} proved that the logarithmic density of $\mathcal{M}_\mathscr{B}$ (and thus also that of $\mathcal{F}_\mathscr{B}$) always exists and it is approximated by the densities of the natural periodic approximations of $\mathcal{M}_\mathscr{B}$. Namely, we have the following.
\begin{theorem}\label{DE}
For any $\mathscr{B}\subseteq \N$,
\[
\delta(\mathcal{M}_{\mathscr{B}})=\underline{d}(\mathcal{M}_\mathscr{B})=\lim_{K\to \infty}d(\mathcal{M}_{\mathscr{B}\cap[1,K]}).
\]
\end{theorem}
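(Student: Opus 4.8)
The plan is to squeeze every quantity in the statement between the single number
\[
\beta:=\lim_{K\to\infty}d(\mathcal M_{\mathscr B\cap[1,K]}),
\]
and to show they all equal it. Write $A=\mathcal M_{\mathscr B}$ and $A_K=\mathcal M_{\mathscr B\cap[1,K]}$. Each $A_K$ is a finite union of arithmetic progressions, hence periodic with period $\lcm\{b\in\mathscr B:b\le K\}$; a periodic set has a natural density that coincides with its logarithmic density, so $d(A_K)=\delta(A_K)$ is well defined. Since $A_K\subseteq A_{K+1}$, the numbers $d(A_K)$ increase and are bounded by $1$, so $\beta$ exists. The goal is then to establish the chain
\[
\beta\le\underline d(A)\le\underline\delta(A)\le\overline\delta(A)\le\beta,
\]
where $\underline\delta,\overline\delta$ are the lower and upper logarithmic densities; this forces $\delta(A)=\underline d(A)=\beta=\lim_K d(A_K)$, which is exactly the assertion.

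Two of the four inequalities are soft. For the leftmost, I would use monotonicity: from $A\supseteq A_K$ one gets $\underline d(A)\ge\underline d(A_K)=d(A_K)$, and letting $K\to\infty$ gives $\underline d(A)\ge\beta$. The middle chain $\underline d\le\underline\delta\le\overline\delta\le\overline d$ holds for \emph{any} subset of $\N$, and I would obtain it by Abel summation: with $C(t)=|A\cap[1,t]|$ one has $\sum_{k\le n}\frac{\mathbf 1_A(k)}{k}=\frac{C(n)}{n}+\int_1^n\frac{C(t)}{t^2}\,dt+O(1)$, so after the substitution $t=e^u$, $\frac{1}{\ln n}\sum_{k\le n,\,k\in A}\frac1k$ equals, up to a vanishing error, the Cesàro average $\frac1U\int_0^U g(u)\,du$ with $U=\ln n$ and $g(u)=C(e^u)/e^u$. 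A Cesàro average lies between the lower and upper limits of its integrand, and $g$ has lower limit $\underline d(A)$ and upper limit $\overline d(A)$; this yields the chain and simultaneously explains why the logarithmic density is more stable than the natural one.

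The heart of the matter, and the step I expect to be the main obstacle, is the rightmost inequality $\overline\delta(A)\le\beta$. The natural reduction is $\overline\delta(A)\le\delta(A_K)+\overline\delta(A\setminus A_K)=d(A_K)+\overline\delta(A\setminus A_K)$, after which it suffices to show $\overline\delta(A\setminus A_K)\to0$ as $K\to\infty$. The elementary approaches break down instructively here. The crude union bound $\overline\delta(A\setminus A_K)\le\sum_{b\in\mathscr B,\,b>K}1/b$ is useless, since that tail may diverge (already for $\mathscr B$ the primes); and one cannot replace logarithmic by natural density, because the Besicovitch example is precisely a case in which $\overline d(A\setminus A_K)$ stays bounded away from $0$ for every $K$. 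Thus the high-density excursions of $A$ above its periodic truncations are genuine: what must be shown is that they are \emph{logarithmically} negligible, and this seems possible only by using the multiplicative structure, namely that $A$ is closed under passing to multiples.

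Concretely, I would carry out this last step in the profinite completion $\widehat\Z=\varprojlim\Z/m\Z$ with its Haar probability measure $\mu$. Each truncation corresponds to a clopen set $\widehat A_K$ with $\mu(\widehat A_K)=d(A_K)$, and $\widehat A:=\bigcup_K\widehat A_K$ is open with $\mu(\widehat A)=\beta$; moreover $k\in A$ iff the image of $k$ lies in $\widehat A$. The logarithmic averaging measures $\lambda_n:=\frac1{\ln n}\sum_{k\le n}\frac1k\,\bdelta_{k}$ (unit masses at the images of the integers in $\widehat\Z$) converge weakly to $\mu$, because they do so on the dense algebra of locally constant functions---this is exactly the fact that a periodic set has logarithmic density equal to its natural density. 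Weak convergence applied to the open set $\widehat A$ and its closure only yields $\beta\le\overline\delta(A)\le\beta+\mu(\partial\widehat A)$, and the boundary term can be strictly positive; closing this gap is the crux, and it is where the hypothesis that $A$ is a set of multiples must finally be spent. Here I would exploit that $A$, being a union of the ideals $b\Z$, satisfies $dA\subseteq A$ for every $d\ge1$, so the family $\{\lambda_n\}$ is compatible with the multiplication-by-$d$ maps on $\widehat\Z$; averaging $\lambda_n$ against these dilations and using the invariance of the weight $1/k$ should force the mass sitting on $\partial\widehat A$ to cancel, upgrading the estimate to $\overline\delta(A)\le\mu(\widehat A)=\beta$. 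Once this is in place the four-term sandwich closes and the theorem follows.
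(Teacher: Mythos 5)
Your reduction of the theorem is correct: the sandwich $\beta\le\underline{d}(\mathcal{M}_\mathscr{B})\le\underline{\delta}(\mathcal{M}_\mathscr{B})\le\overline{\delta}(\mathcal{M}_\mathscr{B})\le\beta$ would indeed prove everything, the first inequality is immediate from $\mathcal{M}_{\mathscr{B}\cap[1,K]}\subseteq\mathcal{M}_\mathscr{B}$, and the middle chain follows from Abel summation exactly as you describe. Note, for calibration, that the paper itself does not prove this statement at all: it quotes it as the classical Davenport--Erd\H{o}s theorem (reference \cite{MR43835}), so the only meaningful comparison is with the classical proofs, not with anything in the text.

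The genuine gap is the final inequality $\overline{\delta}(\mathcal{M}_\mathscr{B})\le\beta$, which you rightly call the crux but do not prove; since everything else is soft, this inequality \emph{is} the theorem, and your profinite reformulation merely restates it. Concretely, what you would need is $\limsup_n\lambda_n(\widehat{A})\le\mu(\widehat{A})$ for the open set $\widehat{A}=\bigcup_{b\in\mathscr{B}}b\widehat{\Z}$, and the three properties you actually invoke --- $\lambda_n\to\mu$ weakly, $\widehat{A}$ open with $\mu(\widehat{A})=\beta$, and $d\widehat{A}\subseteq\widehat{A}$ for all $d$ --- are provably insufficient to yield it. Indeed, take $\mathscr{B}=\{p^2:p\in\mathcal{P}\}$: then $\widehat{A}$ is \emph{dense} in $\widehat{\Z}$ (by the Chinese Remainder Theorem every basic clopen set meets some $p^2\widehat{\Z}$), so one can choose finitely supported probability measures $\nu_n$ with atoms in $\widehat{A}$ converging weakly to $\mu$ and satisfying $\nu_n(\widehat{A})=1$ for all $n$, even though $\mu(\widehat{A})=1-6/\pi^2$; moreover the closure of $\widehat{A}$ (here all of $\widehat{\Z}$) inherits the dilation invariance $d\,\mathrm{cl}(\widehat{A})\subseteq\mathrm{cl}(\widehat{A})$ by continuity of multiplication, so no argument using only weak convergence and dilation invariance can distinguish $\widehat{A}$ from its closure and ``cancel'' the boundary mass. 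The same phenomenon occurs in Besicovitch's example, where $\mathscr{B}$ contains infinitely many primes and $\widehat{A}$ is again dense, yet $\beta<\varepsilon$. So a correct proof must exploit finer arithmetic properties of the specific measures $\lambda_n$ --- precisely the step where Davenport and Erd\H{o}s spend their real idea (in the original argument, the Hardy--Littlewood Tauberian theorem applied to the Dirichlet series $\sum_{n\in\mathcal{M}_\mathscr{B}}n^{-s}$, whose multiplicative structure gives access to $\lim_{s\to1^+}(s-1)\sum_{n\in\mathcal{M}_\mathscr{B}}n^{-s}=\beta$; their later proof replaces this by a delicate elementary analysis). Nothing playing that role appears in your sketch, so the proposal is circular at its decisive step.
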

The natural density of $\mathcal{M}_\mathscr{B}$ may not exist. Examples of such sets $\mathscr{B}$ was given by Besicovitch in~\cite{MR1512943}, we will make some more comments on this construction later.  Moreover, Erd\H{o}s showed in \cite{MR0026088} the following (we recall here the formulation from \cite[Theorem 0]{ErHaTe}).
\begin{theorem}\label{besicovitch}
Let $\mathscr{B}\subseteq \N$. Then $d(\mathcal{M}_\mathscr{B})$ exists if and only if
\[
\lim\limits_{\varepsilon\rightarrow 0}\limsup_{x\to \infty}x^{-1}\sum_{x^{1-\varepsilon}<a\leq x, a\in\mathscr{B}}|[1,x]\cap a\Z \cap \mathcal{F}_{\mathscr{B}\cap [1,a)}|=0.
\]
\end{theorem}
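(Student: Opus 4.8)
The plan is to decompose $\mathcal{M}_\mathscr{B}$ according to the least element of $\mathscr{B}$ dividing each integer. For $n\in\mathcal{M}_\mathscr{B}$ put $a(n)=\min\{b\in\mathscr{B}:b\divides n\}$ and note that $\{n:a(n)=a\}=a\Z\cap\mathcal{F}_{\mathscr{B}\cap[1,a)}$, so these sets are disjoint and $\bigcupdot_{a\le y,\,a\in\mathscr{B}}(a\Z\cap\mathcal{F}_{\mathscr{B}\cap[1,a)})=\mathcal{M}_{\mathscr{B}\cap[1,y]}$ for every $y$. Since $a(n)\le x$ whenever $n\in\mathcal{M}_\mathscr{B}\cap[1,x]$, for each $\varepsilon\in(0,1)$, writing $y=x^{1-\varepsilon}$,
\[
|\mathcal{M}_\mathscr{B}\cap[1,x]| = \underbrace{|\mathcal{M}_{\mathscr{B}\cap[1,y]}\cap[1,x]|}_{S_1(x,\varepsilon)} + \underbrace{\sum_{y<a\le x,\,a\in\mathscr{B}}\bigl|[1,x]\cap a\Z\cap\mathcal{F}_{\mathscr{B}\cap[1,a)}\bigr|}_{S_2(x,\varepsilon)}.
\]
Here $S_2$ is exactly the inner sum of the theorem, and it is non-increasing as $\varepsilon\to0$, so the double limit exists and the claim is that it equals $0$. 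By Theorem~\ref{DE}, $\underline d(\mathcal{M}_\mathscr{B})=\delta(\mathcal{M}_\mathscr{B})=\lim_{K}d(\mathcal{M}_{\mathscr{B}\cap[1,K]})=:\delta$; moreover $\mathcal{M}_{\mathscr{B}\cap[1,K]}\subseteq\mathcal{M}_{\mathscr{B}\cap[1,y]}$ once $y\ge K$ and the former is periodic, whence $\liminf_{x}S_1(x,\varepsilon)/x\ge d(\mathcal{M}_{\mathscr{B}\cap[1,K]})$ for each $K$, and therefore $\liminf_{x}S_1(x,\varepsilon)/x\ge\delta$ for every $\varepsilon$.

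For the \emph{forward} implication, assume $d(\mathcal{M}_\mathscr{B})$ exists, so it equals $\delta$. Fixing $\varepsilon$ and using $\tfrac1xS_2=\tfrac1x|\mathcal{M}_\mathscr{B}\cap[1,x]|-\tfrac1xS_1$, we get
\[
\limsup_{x\to\infty}\tfrac1xS_2(x,\varepsilon)\le\delta-\liminf_{x\to\infty}\tfrac1xS_1(x,\varepsilon)\le\delta-\delta=0,
\]
and since $S_2\ge0$ this forces $\limsup_x S_2(x,\varepsilon)/x=0$ for every $\varepsilon$, hence the double limit vanishes. This half uses only the elementary truncation bound above.

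For the \emph{reverse} implication, assume the tail condition. As $\underline d(\mathcal{M}_\mathscr{B})=\delta$, it suffices to show $\overline d(\mathcal{M}_\mathscr{B})\le\delta$. From the displayed identity $\overline d(\mathcal{M}_\mathscr{B})\le\limsup_xS_1/x+\limsup_xS_2/x$, so after letting $\varepsilon\to0$ the whole matter reduces to
\[
(\star)\qquad \limsup_{x\to\infty}\frac1x\,|\mathcal{M}_{\mathscr{B}\cap[1,x^{1-\varepsilon}]}\cap[1,x]|\le\delta\qquad\text{for each fixed }\varepsilon.
\]
I expect $(\star)$ to be the main obstacle. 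The set $\mathcal{M}_{\mathscr{B}\cap[1,y]}$ is periodic, but with period possibly as large as $\lcm(\mathscr{B}\cap[1,y])$, which is typically far larger than $x$; one cannot read its window density off a single period, and $\overline d(\mathcal{M}_\mathscr{B})$ itself may exceed $\delta$ (Besicovitch). So the lower bound $\liminf_xS_1/x\ge\delta$ is soft, while the matching upper bound genuinely requires arithmetic input and is where the polynomial gap encoded by $\varepsilon$ is essential.

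The mechanism making truncation harmless is that whether $n\in\mathcal{M}_{\mathscr{B}\cap[1,y]}$ depends only on the $y$-smooth part $s$ of $n$: every $b\in\mathscr{B}$ with $b\le y$ is itself $y$-smooth, so $b\divides n\iff b\divides s$. (Concretely, if $f\in\mathcal{F}_\mathscr{B}$ and $p>y$ is prime, then any $b\le y$ is coprime to $p$, so $b\divides pf$ forces $b\divides f$; thus $pf\in\mathcal{F}_{\mathscr{B}\cap[1,y]}$.) This yields the exact identity
\[
|\mathcal{M}_{\mathscr{B}\cap[1,y]}\cap[1,x]|=\sum_{\substack{s\ y\text{-smooth}\\ s\in\mathcal{M}_{\mathscr{B}\cap[1,y]}}}\Phi(x/s,y),\qquad \Phi(Z,y):=|\{m\le Z:\ p\divides m\Rightarrow p>y\}|,
\]
to be compared with $\delta\ge d(\mathcal{M}_{\mathscr{B}\cap[1,y]})=\sum_{s}W(y)/s$, where $W(y)=\prod_{p\le y}(1-1/p)$. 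Because $y=x^{1-\varepsilon}$ keeps $u=\log(x/s)/\log y$ bounded, the rough‑number counts $\Phi(x/s,y)$ must be compared with their expected size $W(y)\,x/s$ in the Buchstab regime and summed over the smooth $s\in\mathcal{M}_{\mathscr{B}\cap[1,y]}$; carrying out this comparison uniformly is the analytic heart of $(\star)$. Granting $(\star)$, the reverse direction closes: for each $\varepsilon$, $\overline d(\mathcal{M}_\mathscr{B})\le\delta+\limsup_xS_2(x,\varepsilon)/x$, and letting $\varepsilon\to0$ gives $\overline d(\mathcal{M}_\mathscr{B})\le\delta=\underline d(\mathcal{M}_\mathscr{B})$, so $d(\mathcal{M}_\mathscr{B})$ exists.
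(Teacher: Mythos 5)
The paper itself never proves this statement: it is quoted as a theorem of Erd\H{o}s, in the Erd\H{o}s--Hall--Tenenbaum formulation, with a citation in place of an argument. So your proposal has to be judged on its own merits. Its setup and its first half pass that test: the decomposition by least $\mathscr{B}$-divisor giving $|\mathcal{M}_{\mathscr{B}}\cap[1,x]|=S_1(x,\varepsilon)+S_2(x,\varepsilon)$ is exact, the bound $\liminf_{x}S_1(x,\varepsilon)/x\ge\delta$ via Theorem~\ref{DE} is correct, and your proof of the forward implication is complete and elementary.

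The reverse implication, however, is not proved: you reduce it to $(\star)$ and then write ``granting $(\star)$''. That is a genuine gap, not a technical remainder, because $(\star)$ \emph{is} the hard direction of Erd\H{o}s's theorem --- once it is available, the conclusion follows in the three lines of your last paragraph. Worse, the mechanism you sketch for $(\star)$ cannot deliver it. A termwise comparison of $\Phi(x/s,y)$ with $W(y)\,x/s$ fails in both ranges of $s$: for $s\le x^{\varepsilon}$ one has $u_s=\log(x/s)/\log y\in[1,\tfrac{1}{1-\varepsilon}]$ and, by Buchstab's theorem, $\Phi(x/s,y)=\bigl(e^{\gamma}\omega(u_s)+o(1)\bigr)W(y)\,x/s$ with $\omega(u_s)=1/u_s$, so the pointwise ratio is $\approx e^{\gamma}\approx 1.78$, not $1+o(1)$; and for $s\in(x^{\varepsilon},x]$ the window weight is $\Phi(x/s,y)=1$, which for $s$ near $x$ exceeds $W(y)x/s\asymp x/(s\log y)$ by a factor as large as $\asymp\log y$. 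Summing such comparisons can therefore only give a bound like $\limsup_x S_1/x\lesssim e^{\gamma}\delta$ plus an untreated term, and $\overline{d}(\mathcal{M}_{\mathscr{B}})\le e^{\gamma}\delta$ does not force the density to exist. The over-weighting is compensated only \emph{on average}: the window gives weight $0$ to smooth parts $s>x$, which carry a fixed proportion $\approx 1-e^{-\gamma}$ of the global mass $\sum_{s}W(y)/s=1$, so $(\star)$ is really the assertion that $\mathcal{M}_{\mathscr{B}\cap[1,y]}$ cannot concentrate its smooth-part mass on $[1,x]$ disproportionately --- and nothing in your outline engages with that; it is exactly this $e^{\gamma}$-loss that makes the classical proof nontrivial. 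Two smaller remarks: you do not actually need $(\star)$ with the sharp constant $\delta$ (whose truth for arbitrary $\mathscr{B}$ is itself not obvious); the weaker bound $\limsup_x S_1(x,\varepsilon)/x\le\delta+\phi(\varepsilon)$ with $\phi(\varepsilon)\to0$ suffices, and a correct completion should aim at that. As it stands, you have proved one implication of the equivalence and correctly isolated, but not overcome, the obstacle in the other.
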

Whenever $d(\mathcal{M}_{\mathscr{B}})$ exists, we say that $\mathscr{B}$ is \textbf{Besicovitch}.

\paragraph{Dynamics}
For each $\mathscr{B}\subseteq \N$, there is a dynamical system related to its sets of multiples. Namely, let $\sigma\colon\{0,1\}^\Z\to \{0,1\}^\Z$ stand for the left shift, define
$\eta:=\mathbf{1}_{\mathcal{F}_\mathscr{B}}\in \{0,1\}^\Z$ and consider its orbit closure under $\sigma$:
\[
X_\eta:=\overline{\{\sigma^n \eta :n\in\Z\}}.
\]
We call $X_\eta$ the \textbf{$\mathscr{B}$-free system} (or the $\mathscr{B}$-free subshift).  Usually we will assume that $\mathscr{B}$ is \textbf{primitive}, i.e. for any $b,b'\in\mathscr{B}$, if $b\divides b'$ then $b=b'$. This assumption has no influence on the dynamics since $\mathcal{M}_\mathscr{B}=\mathcal{M}_{\mathscr{B}^{prim}}$, where $\mathscr{B}^{prim}=\mathscr{B}\setminus \bigcup_{k\geq 2}k\mathscr{B}$.\footnote{Note that $\mathscr{B}^{prim}$ is the unique primitive set with the same set of multiples as that of $\mathscr{B}$.}

There is a certain natural $\sigma$-invariant measure on $X_\eta$. A simple way to define it is to use the sequence $(N_k)$ realizing the lower density of $\mathcal{M}_{\mathscr{B}}$ as in Theorem~\ref{DE}. Namely,
by~\cite[Theorem~4.1]{MR3803141},
\begin{equation}\label{qgen}
\eta \text{ is a quasi-generic point, i.e.\ the weak-* limit }\lim_{k\to\infty}\frac{1}{N_k}\sum_{n\leq N_k}\delta_{\sigma^n \eta} \text{ exists}.
\end{equation}
The resulting measure is called the \textbf{Mirsky measure} and it is denoted by $\nu_\eta$. Usually, there are more $\sigma$-invariant measures living on $X_\eta$, we will get back to this in a moment.

The topological and the measure-theoretic dynamics of $X_\eta$ highly depends on the arithmetic properties of $\mathscr{B}$. Recall that set $\mathscr{B}$ is said to be:
\begin{itemize}
\item \textbf{thin} if $\sum_{b\in\mathscr{B}}\nicefrac{1}{b}<\infty$,
\item \textbf{Erd\H{o}s} if $\mathscr{B}$ is infinite, pairwise coprime and thin,
\item \textbf{taut} if $\delta(\mathcal{M}_{\mathscr{B}\setminus \{b\}})<\delta(\mathcal{M}_{\mathscr{B}})$ for any $b\in\mathscr{B}$,
\item \textbf{Behrend} if $\mathscr{B}\subseteq\N\setminus\{1\}$ and $\delta(\mathcal{M}_\mathscr{B})=1$.\footnote{For primitive $\mathscr{B}$, condition $\mathscr{B}\subseteq\N\setminus\{1\}$ can be replaced by the requirement that $\mathscr{B}$ is infinite.}
\item \textbf{minimal} if $\mathscr{B}$ is primitive and for any $n\in\Z$ there exists $s_n$ such that either $n+s_n\Z\subseteq \mathcal{M}_\mathscr{B}$ or $n+s_n\Z\subseteq \mathcal{F}_\mathscr{B}$ (in other words, $\eta=\mathbf{1}_{\mathcal{F}_\mathscr{B}}$ is a Toeplitz sequence).\footnote{Note that periodic sequences are contained in this definition. The name \emph{minimality} is new, while the concept itself appeared earlier.}
\end{itemize}

\begin{remark}\label{basic_examples}
We collect here some relations of the classes of sets $\mathscr{B}$ defined above.
\begin{enumerate}[(a)]
\item
Every thin set is Besicovitch (this follows, e.g., from Theorem~\ref{DE}).
\item\label{Re:B}
Every primitive thin set is taut~\cite[implication (7)]{MR3803141}. In particular, any Erd\H{o}s set is taut.
\item\label{Behrend_coprime}
Any set of primes with nonsummable series of reciprocals is Behrend~\cite[formula (0.69)]{MR1414678}.
\item\label{minimal_taut} Any minimal set is taut~\cite[Proposition~5.3]{MR3731019}.
\end{enumerate}
\end{remark}
\begin{remark}\label{uwaga31}
Throughout the paper we will need some additional properties of the classes of sets $\mathscr{B}$. 
\begin{enumerate}[(a)]
\item\label{union_Besicovitch} A finite union of Besicovitch sets is Besicovitch~\cite[Theorem~1.6]{MR1414678}.
\item
Any tail of a Behrend set remains Behrend \cite[Corollary~0.14]{MR1414678}.
\item\label{Rk:d}
If $\mathscr{B}=\mathscr{B}_1\cup\mathscr{B}_2$ is Behrend then at least one of the sets $\mathscr{B}_1,\mathscr{B}_2$ is Behrend~\cite[Corollary 0.14]{MR1414678}.
\item\label{Rk:e}
$\mathscr{B}$ is taut if and only if $\mathscr{B}$ is primitive and $c\mathscr{A}\not\subseteq \mathscr{B}$ for any $c\in\N$ and any Behrend set $\mathscr{A}$~\cite[Corollary 0.19]{MR1414678}.
\item\label{Rk:f}
A subset of a taut set is taut. Indeed, immediate by~\eqref{Rk:e}.
\item\label{Rk:g}
If $\mathscr{B}=\mathscr{B}_1\cup\mathscr{B}_2$ is primitive and $\mathscr{B}_i$, $i=1,2$ is taut then $\mathscr{B}$ is taut. Indeed, suppose that $c\mathscr{A}\subset \mathscr{B}$ where $\mathscr{A}$ is a Behrend set. Let $\mathscr{A}_i=\{a\in\mathscr{A}:ca\in \mathscr{B}_i\}$ for $i=1,2$. Then $\mathscr{A}=\mathscr{A}_1\cup \mathscr{A}_2$ and by~\eqref{Rk:d} at least one of the sets $\mathscr{A}_1,\mathscr{A}_2$ is Behrend. In view of~\eqref{Rk:e}, this contradicts our assumption that both $\mathscr{B}_1,\mathscr{B}_2$ are taut.
\item\label{Rk:e_min}
$\mathscr{B}$ is minimal if and only if $\mathscr{B}$ is primitive and $c\mathscr{A}\not\subseteq \mathscr{B}$ for any $c\in\N$ and any infinite pairwise coprime set $\mathscr{A}$ \cite[Theorem B]{MR3947636}.
\item\label{Rk:f_min}
A subset of a minimal set is minimal. Indeed, immediate by~\eqref{Rk:e_min}.
\item\label{Rk:i}
If $\mathscr{B}=\mathscr{B}_1\cup\mathscr{B}_2$ is primitive and $\mathscr{B}_i$, $i=1,2$ is minimal then $\mathscr{B}$ is minimal. Indeed, suppose that $c\mathscr{A}\subset\mathscr{B}$ where $\mathscr{A}$ is an infinite pairwise coprime. Let $\mathscr{A}_i=\{a\in\mathscr{A}\colon ca\in\mathscr{B}_i\}$ for $i=1,2$. Then $\mathscr{A}=\mathscr{A}_1\cup\mathscr{A}_2$ and at least one of the sets $\mathscr{A}_1, \mathscr{A}_2$ is infinite and pairwise coprime. Since $c\mathscr{A}_i\subset\mathscr{B}_i$, $i=1,2$, this contradicts our assumption that both $\mathscr{B}_1,\mathscr{B}_2$ are minimal, see~\eqref{Rk:e_min}.
\end{enumerate}
\end{remark}
We say that a subshift $X\subseteq \{0,1\}^\Z$ is \textbf{hereditary} if for any $x\in X$ and $y\in \{0,1\}^\Z$ such that $y\leq x$, we have $y\in X$. We denote by $\widetilde{X}$ the smallest hereditary subshift containing $X$ and we call it the \textbf{hereditary closure} of $X$. In the Erd\H{o}s case, $X_\eta$ is hereditary~\cite{MR3803141}, while in general this might not be the case, see the relevant examples in~\cite{MR3803141}.

In~\cite{MR3356811}, for $\mathscr{B}\subseteq \N$ being Erd\H{o}s, the set of probability invariant measures $\mathcal{M}(X_\eta)$ on $X_\eta=\widetilde{X}_\eta$ was described in the following
 way:
\begin{equation*}
\mathcal{M}(\widetilde{X}_\eta)=\{M_\ast(\rho) : \rho\in\mathcal{M}(\{0,1\}^\Z\times\{0,1\}^\Z),\ (\pi_1)_\ast(\rho)=\nu_\eta\},
\end{equation*}
where $M\colon \{0,1\}^\Z\times \{0,1\}^\Z\to\{0,1\}^\Z$ stands for the coordinatewise multiplication of sequences and $\pi_1$ is the projection onto the first $\{0,1\}^\Z-$coordinate. Later, in~\cite{MR3803141}, it was shown that the above  description of $\mathcal{M}(\widetilde{X}_\eta)$ is valid for any $\mathscr{B}\subseteq \N$.

Recall that class of taut sets was first introduced in number-theoretic context~\cite{MR1414678}. Later it turned out that they are important also from the dynamical point of view~\cite{MR3803141}. Namely, we have the following.
\begin{theorem}[see Theorem~C in~\cite{MR3803141}]\label{taut}
For any $\mathscr{B}\subseteq \N$, there exists a unique taut set $\mathscr{B}'\subseteq \N$ such that $\nu_\eta=\nu_{\eta'}$, where $\eta':=\mathbf{1}_{\mathcal{F}_{\mathscr{B}'}}$. Moreover, $\eta'\leq \eta$  and $\mathcal{M}(\widetilde{X}_\eta)=\mathcal{M}(\widetilde{X}_{\eta'})$.
\end{theorem}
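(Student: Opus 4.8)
The plan is to produce $\mathscr{B}'$ explicitly as the primitive generating set of the \emph{logarithmic essential cover} of $\mathcal{M}_\mathscr{B}$, and then to read off all four assertions from this description. First I would reduce to the case that $\mathscr{B}$ is primitive (passing to $\mathscr{B}^{prim}$ changes neither $\mathcal{M}_\mathscr{B}$ nor $\eta$ nor $\nu_\eta$). Set
\[
D:=\{c\in\N : \delta(c\Z\setminus\mathcal{M}_\mathscr{B})=0\}.
\]
Since $c\divides c'$ forces $c'\Z\subseteq c\Z$, the set $D$ is closed under passing to multiples, and it contains every $b\in\mathscr{B}$; hence $D=\mathcal{M}_{\mathscr{B}'}$ for the primitive set $\mathscr{B}':=D^{prim}$, and $\mathcal{M}_{\mathscr{B}'}\supseteq\mathcal{M}_\mathscr{B}$. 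This inclusion immediately gives $\mathcal{F}_{\mathscr{B}'}\subseteq\mathcal{F}_\mathscr{B}$, i.e.\ $\eta'\leq\eta$.

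The engine for all density bookkeeping is Theorem~\ref{DE}. To see $\delta(\mathcal{M}_{\mathscr{B}'}\setminus\mathcal{M}_\mathscr{B})=0$, observe that for every finite $F\subseteq\mathscr{B}'$ the difference $\mathcal{M}_F\setminus\mathcal{M}_\mathscr{B}=\bigcup_{c\in F}(c\Z\setminus\mathcal{M}_\mathscr{B})$ is a \emph{finite} union of sets of logarithmic density $0$; as $\mathcal{M}_F$ is periodic this yields $d(\mathcal{M}_{\mathscr{B}'\cap[1,K]})\leq\delta(\mathcal{M}_\mathscr{B})$ for every $K$, and Theorem~\ref{DE} gives $\delta(\mathcal{M}_{\mathscr{B}'})\leq\delta(\mathcal{M}_\mathscr{B})$. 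Together with the reverse inequality from $\mathcal{M}_{\mathscr{B}'}\supseteq\mathcal{M}_\mathscr{B}$ this forces $\delta(\mathcal{M}_{\mathscr{B}'}\setminus\mathcal{M}_\mathscr{B})=0$. Consequently $\eta$ and $\eta'$ agree off a set of logarithmic density zero, so every finite pattern has the same logarithmic frequency along both, whence $\nu_\eta=\nu_{\eta'}$. This is exactly where the subtlety flagged in the abstract enters: the symmetric difference may carry positive \emph{upper} density, so one must work with logarithmic, not Cesàro, frequencies---equivalently, choose the averaging sequence $(N_k)$ of~\eqref{qgen} to realize the lower density of $\mathcal{M}_{\mathscr{B}'}$ simultaneously. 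Granting $\nu_\eta=\nu_{\eta'}$, the identity $\mathcal{M}(\widetilde{X}_\eta)=\mathcal{M}(\widetilde{X}_{\eta'})$ is then free, since by the description of invariant measures on hereditary closures recalled above both sides equal $\{M_\ast\rho:(\pi_1)_\ast\rho=\nu_\eta\}$.

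Tautness of $\mathscr{B}'$ I would verify through the criterion~\eqref{Rk:e}: $\mathscr{B}'$ is primitive by construction, so it suffices to exclude $c\mathscr{A}\subseteq\mathscr{B}'$ with $\mathscr{A}$ Behrend. Assuming such an inclusion, the same truncation argument---now using $\delta(\mathcal{M}_\mathscr{A})=1$ to handle $c\Z\setminus c\mathcal{M}_\mathscr{A}$, and Theorem~\ref{DE} applied to $\mathcal{M}_{c\mathscr{A}}$ to handle $c\mathcal{M}_\mathscr{A}\setminus\mathcal{M}_\mathscr{B}$---shows $\delta(c\Z\setminus\mathcal{M}_\mathscr{B})=0$, i.e.\ $c\in D=\mathcal{M}_{\mathscr{B}'}$. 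Then some element of $\mathscr{B}'$ divides $c$, hence properly divides $ca\in\mathscr{B}'$ for any $a\in\mathscr{A}$ (as $a>1$), contradicting primitivity. Thus $\mathscr{B}'$ is taut.

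The main obstacle is uniqueness, precisely because the Mirsky measure records only logarithmic densities: any two sets differing on a logarithmically null set share it, so uniqueness can hold only if tautness removes this slack. I would reduce to showing that two taut sets $\mathscr{B}_1,\mathscr{B}_2$ with $\nu_{\eta_1}=\nu_{\eta_2}$ (where $\eta_i=\mathbf{1}_{\mathcal{F}_{\mathscr{B}_i}}$) coincide. The key lemma is that a taut set equals its own logarithmic essential cover---$c\notin\mathcal{M}_{\mathscr{B}_i}\Rightarrow\delta(c\Z\setminus\mathcal{M}_{\mathscr{B}_i})>0$---which is the contrapositive content of~\eqref{Rk:e}: an essentially covered class $c\Z$ lying outside $\mathcal{M}_{\mathscr{B}_i}$ produces, by contracting $\mathscr{B}_i$ along $c$, a Behrend witness to non-tautness, and here I would lean on the number-theoretic structure of taut sets from~\cite{MR1414678}. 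It then remains to recover the essential cover---equivalently $\mathcal{M}_{\mathscr{B}_i}$---from $\nu_{\eta_i}$ alone: since $\nu_{\eta_i}$ determines every logarithmic pattern-frequency and, for taut sets, there is no logarithmically null ambiguity, the two covers agree, and primitivity upgrades $\mathcal{M}_{\mathscr{B}_1}=\mathcal{M}_{\mathscr{B}_2}$ to $\mathscr{B}_1=\mathscr{B}_2$. Pinning down the exact set of multiples from the measure, rather than merely up to logarithmic density, is the delicate step, and it is exactly where tautness is indispensable.
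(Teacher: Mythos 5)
The paper never proves this statement: it is quoted from \cite{MR3803141} (Theorem C there), and everything the paper adds around it --- the formula $\mathscr{B}'=(\mathscr{B}\cup C)^{prim}$ with $C$ as in \eqref{skaleC}, the fact \eqref{density_difference}, and Theorem~\ref{ThmL} --- is likewise cited. So your attempt can only be judged against that literature and the black boxes this paper recalls.

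Your existence half is essentially correct and is the known construction in disguise: your set $D=\{c\in\N:\delta(c\Z\setminus\mathcal{M}_\mathscr{B})=0\}$ coincides with $\mathcal{M}_{\mathscr{B}\cup C}$ (one inclusion is the dilation property $\delta(cA)=\delta(A)/c$; for the other, split $\{b/\gcd(b,c):b\in\mathscr{B}\}$ over the divisors $d\divides c$ and use Remark~\ref{uwaga31}~\eqref{Rk:d} to extract a Behrend witness), so $D^{prim}$ is the usual $\mathscr{B}'$. The derivation of $\eta'\leq\eta$, the Davenport--Erd\H{o}s bookkeeping giving $\delta(\mathcal{M}_{\mathscr{B}'}\setminus\mathcal{M}_\mathscr{B})=0$, and the tautness argument via Remark~\ref{uwaga31}~\eqref{Rk:e} all go through. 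For $\nu_\eta=\nu_{\eta'}$, only your parenthetical variant is rigorous with the tools at hand: a sequence $(N_k)$ realizing $\underline{d}(\mathcal{M}_{\mathscr{B}'})$ automatically realizes $\underline{d}(\mathcal{M}_\mathscr{B})$ (the two values agree and $\mathcal{M}_\mathscr{B}\subseteq\mathcal{M}_{\mathscr{B}'}$), the difference set then has zero density along $(N_k)$, and \eqref{qgen} applied to both $\eta$ and $\eta'$ gives the equality; your primary phrasing via logarithmic pattern frequencies silently needs the extra fact that $\eta$ is \emph{logarithmically} generic for $\nu_\eta$, which the paper does not recall. Granting $\nu_\eta=\nu_{\eta'}$, the identity $\mathcal{M}(\widetilde{X}_\eta)=\mathcal{M}(\widetilde{X}_{\eta'})$ is indeed immediate from the recalled description of invariant measures on hereditary closures.

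The genuine gap is uniqueness. What has to be shown is that two taut sets with equal Mirsky measures coincide. Your reduction to ``a taut set equals its own essential cover'' is fine (it is the same contraction-plus-Behrend argument as above), but the remaining step --- recovering the essential cover, equivalently $\mathcal{M}_{\mathscr{B}_i}$, from the measure $\nu_{\eta_i}$ alone --- is asserted, not proved, and that is where the entire difficulty sits. Pattern frequencies are statistics of the $0$--$1$ sequence, whereas ``$c\Z$ is essentially covered'' is a statement about the arithmetic positions $n\in c\Z$, which is not an event in the subshift; ``no logarithmically null ambiguity'' does not by itself transfer information from the measure back to the arithmetic. A real argument would have to show, for instance, that $c$ is essentially covered if and only if $\lim_{K\to\infty}\nu_{\eta_i}\bigl(\{x: x_{jc}=0 \text{ for all } |j|\leq K\}\bigr)>0$, using the Haar/odometer structure of Mirsky measures; this is precisely the content of \cite[Theorem L]{MR3803141}, recalled in this paper as Theorem~\ref{ThmL}. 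As written, your final paragraph assumes what is to be proved. Within this paper's framework the repair is immediate: invoke Theorem~\ref{ThmL} --- if $\mathscr{B}_1,\mathscr{B}_2$ are taut and $\nu_{\eta_1}=\nu_{\eta_2}$, then $\mathscr{B}_1=\mathscr{B}_2$ --- and uniqueness follows from the existence half you already have.
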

\begin{theorem}[{\cite[Theorem~L]{MR3803141}}]\label{ThmL}
Let $\mathscr{B}_1,\mathscr{B}_2\subseteq \N$ be taut. Then the following conditions are equivalent: $\mathscr{B}_1=\mathscr{B}_2$ $\iff$ $X_{\eta_1}=X_{\eta_2}$ $\iff$ $\nu_{\eta_1}=\nu_{\eta_2}$, where $\eta_i=\mathbf{1}_{\mathcal{F}_{\mathscr{B}_i}}$ for $i=1,2$.
\end{theorem}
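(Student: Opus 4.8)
My plan is to derive all three equivalences from two nontrivial reconstructions, $\nu_{\eta_1}=\nu_{\eta_2}\Rightarrow\mathscr{B}_1=\mathscr{B}_2$ and $X_{\eta_1}=X_{\eta_2}\Rightarrow\mathscr{B}_1=\mathscr{B}_2$; the reverse implications are immediate, since $\eta_i$ is determined by $\mathscr{B}_i$ and in turn determines $X_{\eta_i}$ and $\nu_{\eta_i}$. Both reconstructions rest on a purely arithmetic observation, valid for any primitive $\mathscr{B}$ (and taut sets are primitive): setting $\cD=\{n\in\N:n\Z\subseteq\mathcal{M}_{\mathscr{B}}\}$, an element $b$ lies in $\mathscr{B}$ exactly when it is minimal for divisibility in $\cD$. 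Indeed, each $b\in\mathscr{B}$ belongs to $\cD$, and if a divisor $b'\divides b$ lay in $\cD$ then $b'\in\mathcal{M}_{\mathscr{B}}$ would be a multiple of some $b''\in\mathscr{B}$ with $b''\divides b$, forcing $b''=b$ and hence $b'=b$ by primitivity; conversely every minimal element of $\cD$ lies in $\mathcal{M}_{\mathscr{B}}$ and hence equals, by minimality, the element of $\mathscr{B}$ it is a multiple of. Thus $\mathscr{B}$ is read off from $\cD$, and since $n\in\cD$ iff $\mathcal{F}_{\mathscr{B}}\cap n\Z=\varnothing$, everything reduces to detecting, from $X_\eta$ or $\nu_\eta$, which progressions $n\Z$ avoid the $\mathscr{B}$-free set.

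Because $X_\eta$ is shift-invariant, it cannot localise a residue class on its own, so I would route both reconstructions through the maximal equicontinuous factor of $(X_\eta,\sigma)$, namely the odometer $H=\varprojlim_N\Z/\ell_N\Z$ (with $\ell_N=\lcm(\mathscr{B}\cap[1,N])$) and its canonical factor map $\psi$ satisfying $\psi(\sigma^m\eta)=m$; this factor is a topological invariant of $X_\eta$, and every $b\in\mathscr{B}$ together with each of its divisors is \emph{localisable} in the sense that $\Z/b\Z$ is a quotient of $H$. For the topological reconstruction I would argue that, for localisable $n$, one has $\mathcal{F}_{\mathscr{B}}\cap n\Z\neq\varnothing$ if and only if $X_\eta$ contains a point $y$ with $y(0)=1$ and $\psi(y)\equiv 0\pmod n$: a witness $\sigma^{nk}\eta$ exists precisely when some $nk\in\mathcal{F}_{\mathscr{B}}$, and conversely such a $y$ is a limit of $\sigma^{m_j}\eta$ with $m_j\equiv0\pmod n$, forcing $\eta(m_j)=1$. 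Since $H$ and this witnessing condition depend only on $X_\eta$, the equality $X_{\eta_1}=X_{\eta_2}$ makes the localisable part of $\cD$ coincide for the two sets; as this part already contains $\mathscr{B}$ together with the divisors needed to test minimality, it determines $\mathscr{B}=\min_{\divides}\cD$, giving $\mathscr{B}_1=\mathscr{B}_2$.

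For the measure-theoretic reconstruction the same localisation is available through $\psi$, now as the Kronecker factor of $(X_\eta,\nu_\eta,\sigma)$: disintegrating $\nu_\eta$ over $H$ expresses the relative density of $\mathcal{F}_{\mathscr{B}}$ in $n\Z$ as a functional of $\nu_\eta$ (using that $\eta$ is quasi-generic along $(N_k)$, see~\eqref{qgen}), so $\nu_{\eta_1}=\nu_{\eta_2}$ forces these densities to agree for all localisable $n$. The crux — and the only place tautness is indispensable — is the upgrade from \emph{vanishing density} to \emph{genuine emptiness}: I must show that for taut $\mathscr{B}$ the set $\mathcal{F}_{\mathscr{B}}\cap n\Z$ is empty iff its relative density is $0$. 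One direction is trivial; the converse can fail in general precisely because of the Behrend phenomenon (a nonempty $\mathscr{B}$-free set may have density zero), which tautness excludes via Remark~\ref{uwaga31}\,\eqref{Rk:e}. Concretely I would check that the trace of $\mathscr{B}$ on $n\Z$, after rescaling, is the free set of an explicit derived family $\mathscr{B}^{(n)}$ that inherits tautness, hence is non-Behrend, so that Theorem~\ref{DE} gives $\delta(\mathcal{M}_{\mathscr{B}^{(n)}})<1$ and therefore positive relative density whenever $\mathcal{F}_{\mathscr{B}}\cap n\Z\neq\varnothing$. Verifying that tautness passes to $\mathscr{B}^{(n)}$, and pinning down the exact functional of $\nu_\eta$ computing the relative density, are the steps I expect to be the most delicate.
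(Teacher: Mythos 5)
First, a point of comparison: the paper does not prove this statement at all --- Theorem~\ref{ThmL} is imported verbatim from \cite{MR3803141} --- so your proposal can only be judged against the argument in that reference, which is purely arithmetic and density-theoretic. Much of your setup is sound: the implications starting from $\mathscr{B}_1=\mathscr{B}_2$ are trivial; a primitive set is recovered from $\mathcal{D}=\{n\in\N: n\Z\subseteq\mathcal{M}_{\mathscr{B}}\}$ as its divisibility-minimal elements (in fact more simply $\mathcal{D}=\mathcal{M}_{\mathscr{B}}\cap\N$, since $n\in\mathcal{F}_{\mathscr{B}}\cap n\Z$ whenever no $b\in\mathscr{B}$ divides $n$); and, most importantly, you have correctly isolated the crux: for taut $\mathscr{B}$, the set $\mathcal{F}_{\mathscr{B}}\cap n\Z$ is empty if and only if it has zero density. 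Your sketch of that lemma is essentially right --- writing $\mathcal{F}_{\mathscr{B}}\cap n\Z=n\,\mathcal{F}_{\mathscr{B}^{(n)}}$ with $\mathscr{B}^{(n)}=\{b/\gcd(b,n):b\in\mathscr{B}\}$, if $\mathscr{B}^{(n)}$ were Behrend then by Remark~\ref{uwaga31}~\eqref{Rk:d} one of the finitely many sets $\mathscr{A}_g=\{b/g:\ b\in\mathscr{B},\ \gcd(b,n)=g\}$, $g\divides n$, would be Behrend, and $g\mathscr{A}_g\subseteq\mathscr{B}$ would contradict tautness by Remark~\ref{uwaga31}~\eqref{Rk:e} --- although your phrase ``$\mathscr{B}^{(n)}$ inherits tautness'' is not the right formulation ($\mathscr{B}^{(n)}$ need not even be primitive); what you need, and what this argument gives, is non-Behrendness.

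The genuine gap is the topological reconstruction, which rests on a false premise. For a large class of taut sets --- every Erd\H{o}s set, e.g.\ $\mathscr{B}=\{p^2:p\in\mathcal{P}\}$ --- there is \emph{no} continuous equivariant map $\psi$ from $X_\eta$ onto a nontrivial odometer, and $H$ is not the maximal equicontinuous factor of $(X_\eta,\sigma)$: by CRT, $\eta$ contains arbitrarily long blocks of zeros, so the $\sigma$-fixed point $\mathbf{0}$ lies in $X_\eta$, and a continuous factor map would have to send $\mathbf{0}$ to a fixed point of the odometer rotation, which has none. In fact $X_{\eta^*}=\{\mathbf{0}\}$ is then the unique minimal subset, so $(X_\eta,\sigma)$ is proximal and its maximal equicontinuous factor is a single point. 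Consequently the condition ``$\psi(y)\equiv 0\pmod n$'' is not intrinsic to $X_\eta$ (a point $y$ is a limit of shifts $\sigma^{m_j}\eta$ along many sequences whose residues mod $n$ need not agree), and the implication $X_{\eta_1}=X_{\eta_2}\Rightarrow\mathscr{B}_1=\mathscr{B}_2$ is unproved exactly in the central examples of the theory. The measure-theoretic half also has unresolved gaps, though subtler ones: localisation by the Kronecker factor requires (i) showing that $e^{2\pi i/n}$ really occurs as an eigenvalue of $(X_\eta,\nu_\eta,\sigma)$ for the relevant $n$ --- this uses tautness and is itself nontrivial (for Behrend $\mathscr{B}$ one has $\nu_\eta=\delta_{\mathbf{0}}$ and no nontrivial eigenvalues) --- and (ii) the fact that eigenfunctions are unique only up to a multiplicative constant, so $\nu_\eta$ determines the densities of $\mathcal{F}_{\mathscr{B}}$ in the residue classes mod $n$ only up to a rotation of $\Z/n\Z$ (globally, up to one unknown element of the odometer); hence ``the relative density in $n\Z$'' is \emph{not} a functional of $\nu_\eta$, and your argument would only show that $\mathcal{M}_{\mathscr{B}_1}$ and $\mathcal{M}_{\mathscr{B}_2}$ agree up to a translation, which still has to be ruled out. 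The known proof avoids all of this by staying at the level of densities: for instance, once one knows $\delta(b\Z\cap\mathcal{F}_{\mathscr{B}_2})=0$ for some $b\in\mathscr{B}_1$, the crux lemma yields $b\Z\subseteq\mathcal{M}_{\mathscr{B}_2}$, hence some $b_2\in\mathscr{B}_2$ divides $b$, and primitivity together with the symmetric argument gives $\mathscr{B}_1=\mathscr{B}_2$ --- no equicontinuous factor, topological or measurable, is invoked.
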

We will call the set $\mathscr{B}'$ from Theorem~\ref{taut} the \textbf{tautification} of $\mathscr{B}$. It was proved in~\cite{MR3803141,DKKu} that $\mathscr{B}'=(\mathscr{B}\cup C)^{prim}$, where
\begin{equation}\label{skaleC}
C=\{c\in\N : c\mathscr{A}\subseteq \mathscr{B}\text{ for some Behrend set }\mathscr{A}\}.
\end{equation}
Moreover,
\begin{equation}\label{density_difference}
\eta\text{ and }\eta'\text{ differ along }(N_k)_{k\geq1}\text{ on a subset of zero density},
\end{equation}
and $(N_k)_{k\geq1}$ is a sequence realizing the lower density of $\mathcal{M}_{\mathscr{B}}$.

There is another ``special'' set associated to a given $\mathscr{B}\subseteq \N$, namely $\mathscr{B}^*=(\mathscr{B}\cup D)^{prim}$, where
\begin{equation}\label{skaleD}
D=\{d\in \N : d\mathscr{A}\subseteq\mathscr{B}\text{ for some infinite pairwise coprime set }\mathscr{A}\}.
\end{equation}
Since any Behrend set contains an infinite pairwise coprime set~\cite{MR3803141}, it follows immediately that $C\subseteq D$ and thus $\eta^*\leq \eta'\leq \eta$, where $\eta^*=\mathbf{1}_{\mathcal{F}_{\mathscr{B}^*}}$. Moreover, the following holds.
\begin{theorem}[see {\cite[Proposition~D]{DKPS}}, cf.\ Theorem~\ref{ThmL}]\label{taut_more}
Let $\mathscr{B}\subseteq\N$. Suppose that $\mathscr{C}\subseteq\N$ is taut. Then $\eta^*\leq\eta_\mathscr{C}\leq\eta$ if and only if $X_{\eta_\mathscr{C}}\subseteq X_\eta$.
\end{theorem}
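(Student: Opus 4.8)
The plan is to translate both sides of the equivalence into statements about sets of multiples and then prove the two implications by rather different tools. Since $y\le x$ in $\{0,1\}^\Z$ means $\supp y\subseteq\supp x$, we have $\eta_\mathscr{C}\le\eta\iff\mathcal{M}_\mathscr{B}\subseteq\mathcal{M}_\mathscr{C}$ and $\eta^*\le\eta_\mathscr{C}\iff\mathcal{M}_\mathscr{C}\subseteq\mathcal{M}_{\mathscr{B}^*}$; moreover $\mathcal{M}_\mathscr{B}\subseteq\mathcal{M}_\mathscr{C}\iff\mathscr{B}\subseteq\mathcal{M}_\mathscr{C}$ (because $b\in\mathcal{M}_\mathscr{C}$ already forces $b\Z\subseteq\mathcal{M}_\mathscr{C}$), and similarly $\mathcal{M}_\mathscr{C}\subseteq\mathcal{M}_{\mathscr{B}^*}\iff\mathscr{C}\subseteq\mathcal{M}_{\mathscr{B}^*}$. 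On the dynamical side $X_{\eta_\mathscr{C}}\subseteq X_\eta\iff\eta_\mathscr{C}\in X_\eta$ (an orbit closure lies in a closed invariant set iff its generating point does), i.e.\ iff every finite block of $\eta_\mathscr{C}$ occurs in $\eta$.

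For the implication $X_{\eta_\mathscr{C}}\subseteq X_\eta\Rightarrow\eta^*\le\eta_\mathscr{C}\le\eta$ I would treat the two bounds separately. For the lower bound I use that $X_{\eta^*}$ is the unique minimal subsystem of $X_\eta$ (the Toeplitz subsystem, \cite{MR3947636}). Writing $\mathscr{C}^*$ for the minimal set attached to $\mathscr{C}$ as in \eqref{skaleD}, its blocks occur in $\eta_\mathscr{C}$, so $X_{\eta_{\mathscr{C}^*}}\subseteq X_{\eta_\mathscr{C}}\subseteq X_\eta$ is a minimal subsystem of $X_\eta$, hence $X_{\eta_{\mathscr{C}^*}}=X_{\eta^*}$; as $\mathscr{C}^*,\mathscr{B}^*$ are minimal and therefore taut by \eqref{minimal_taut}, Theorem \ref{ThmL} yields $\mathscr{C}^*=\mathscr{B}^*$, whence $\mathcal{M}_\mathscr{C}\subseteq\mathcal{M}_{\mathscr{C}^*}=\mathcal{M}_{\mathscr{B}^*}$. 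For the upper bound I use that every point of $X_\eta$ is $\mathscr{B}$-admissible (for each $b$ the property of omitting $1$ along some residue class mod $b$ is closed and shift invariant, and holds for $\eta$); thus for each $b\in\mathscr{B}$ there is $r$ with $r+b\Z\subseteq\mathcal{M}_\mathscr{C}$. It then suffices to prove the following Residue-Class Lemma: if $\mathscr{C}$ is taut and $r+b\Z\subseteq\mathcal{M}_\mathscr{C}$, then $b\in\mathcal{M}_\mathscr{C}$. To prove it, put $g=\gcd(r,b)$; the primes $q\equiv r/g\pmod{b/g}$ are infinite with divergent reciprocal sum, hence Behrend by \eqref{Behrend_coprime}. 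For large such $q$ the number $gq$ lies in $r+b\Z\subseteq\mathcal{M}_\mathscr{C}$, so some $c\in\mathscr{C}$ divides $gq$; writing $c=c_1c_2$ with $c_1\mid g$ and $c_2\mid q$, either $c_2=1$ (then $c\mid g\mid b$, giving $b\in\mathcal{M}_\mathscr{C}$) or $c_2=q$. If the second case held for all large $q$, grouping by the finitely many $c_1\mid g$ produces a fixed $c_1^0\mid g$ and a Behrend set $\mathscr{A}$ of primes with $c_1^0\mathscr{A}\subseteq\mathscr{C}$, contradicting tautness via \eqref{Rk:e}. Hence $\mathscr{B}\subseteq\mathcal{M}_\mathscr{C}$, i.e.\ $\eta_\mathscr{C}\le\eta$.

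For the converse $\eta^*\le\eta_\mathscr{C}\le\eta\Rightarrow\eta_\mathscr{C}\in X_\eta$ I would realize each window of $\eta_\mathscr{C}$ inside $\eta$ by a CRT construction. Fix $N$ and split $[-N,N]$ into $M=\mathcal{M}_\mathscr{C}\cap[-N,N]$ and $F=\mathcal{F}_\mathscr{C}\cap[-N,N]$; I seek $k$ with $k+M\subseteq\mathcal{M}_\mathscr{B}$ and $k+F\subseteq\mathcal{F}_\mathscr{B}$, which forces $\eta|_{[k-N,k+N]}=\eta_\mathscr{C}|_{[-N,N]}$. For $j\in M$ the lower bound gives $j\in\mathcal{M}_{\mathscr{B}^*}=\mathcal{M}_\mathscr{B}\cup\mathcal{M}_D$, so $j$ is divisible by some $w_j\in\mathscr{B}\cup D$; when $w_j\in D$, definition \eqref{skaleD} supplies an infinite pairwise coprime $\mathscr{A}$ with $w_j\mathscr{A}\subseteq\mathscr{B}$, which lets me pick, for distinct $j$, pairwise coprime witnesses and impose compatible congruences $k\equiv-j$ modulo suitable $\mathscr{B}$-multiples, forcing $k+j\in\mathcal{M}_\mathscr{B}$. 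For $j\in F$ the upper bound $\mathcal{M}_\mathscr{B}\subseteq\mathcal{M}_\mathscr{C}$ guarantees $j\notin\mathcal{M}_\mathscr{B}$, and I keep $k+j$ free by adjoining, for the finitely many relevant $b\in\mathscr{B}$, the avoidance conditions $k\not\equiv-j\pmod b$; choosing the coprime moduli of the first step large enough to be coprime to all these finitely many obstructions, CRT delivers the desired $k$.

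The main obstacle is exactly this converse construction: making the congruences that impose the prescribed $\mathscr{B}$-multiples on $M$ coexist with the avoidance conditions that keep $F$ free. This is where minimality of $\mathscr{B}^*$ is indispensable — the pairwise coprime families packaged in $D$ provide infinitely many independent ways to realise each required $0$, so the moduli can be taken coprime to all finitely many obstructions — and where tautness of $\mathscr{C}$ enters, via the Residue-Class Lemma, to rule out the spurious coverings of nonzero residue classes that would otherwise destroy the upper bound. I expect the bookkeeping of these congruences, together with the uniform choice of $k$ as $N\to\infty$, to be the only genuinely delicate point; the remaining steps are essentially formal given Theorems \ref{taut} and \ref{ThmL} and the cited description of the unique minimal subsystem.
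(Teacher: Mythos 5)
This theorem is not proved in the paper under review: it is quoted from \cite[Proposition~D]{DKPS}, so there is no internal proof to compare against, and your argument has to stand on its own. Its forward half does. The lower bound via the unique minimal subsystem is correct: $X_{\eta_{\mathscr{C}^*}}$ is the unique minimal subset of $X_{\eta_\mathscr{C}}$, hence a minimal subset of $X_\eta$, hence equal to $X_{\eta^*}$ by essential minimality, and since $\mathscr{C}^*,\mathscr{B}^*$ are minimal and therefore taut, Theorem~\ref{ThmL} gives $\mathscr{C}^*=\mathscr{B}^*$ and thus $\eta^*\leq\eta_\mathscr{C}$. The upper bound via admissibility is also correct, and your Residue-Class Lemma is true: the decomposition $c=c_1c_2$ with $c_1\divides g$, $c_2\in\{1,q\}$, the pigeonhole over the divisors of $g$ combined with Remark~\ref{uwaga31}~\eqref{Rk:d}, and the contradiction with tautness via Remark~\ref{uwaga31}~\eqref{Rk:e} all work. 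Note that this is exactly where tautness of $\mathscr{C}$ enters your proof --- and it is the \emph{only} place where it enters.

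That observation exposes the gap in your converse: you claim $\eta^*\leq\eta_\mathscr{C}\leq\eta\Rightarrow\eta_\mathscr{C}\in X_\eta$ by a CRT construction that nowhere uses tautness of $\mathscr{C}$, but this implication is \emph{false} without tautness, so no bookkeeping can rescue the scheme as described. Concretely, take $\mathscr{B}=2\mathcal{P}\cup3\mathcal{P}$ and $\mathscr{C}=\{2,9\}\cup\{3p: p\in\mathcal{P},\ p\neq3\}$ (not taut, since it contains $3\cdot(\mathcal{P}\setminus\{3\})$, a rescaled Behrend set). Then $\mathscr{B}^*=\{2,3\}$ and $\mathcal{M}_\mathscr{B}\subseteq\mathcal{M}_\mathscr{C}\subseteq2\Z\cup3\Z=\mathcal{M}_{\mathscr{B}^*}$, so the sandwich holds; moreover $\mathcal{F}_\mathscr{B}=\{n\text{ odd}: 3\ndivides n\}\cup\{\pm2,\pm3\}$ and $\mathcal{F}_\mathscr{C}=\{n\text{ odd}: 3\ndivides n\}\cup\{\pm3\}$. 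If a window $\eta_\mathscr{C}|_{[-N,N]}$ with $N$ large occurred in $\eta$ at position $k$, then comparing the residues mod $6$ of the $1$'s (both sequences agree with $\mathbf{1}_{\{n:\,n\equiv\pm1 \,(6)\}}$ off finitely many positions) forces $k\equiv0\pmod 6$; but then $k+3$ is an odd multiple of $3$, and every odd multiple of $3$ other than $\pm3$ lies in $\mathcal{M}_{3\mathcal{P}}\subseteq\mathcal{M}_\mathscr{B}$, so the required $1$ at $j=3$ forces $k\in\{0,-6\}$, and both fail ($k=0$ at $j=2$, because $2\in\mathcal{M}_\mathscr{C}$ while $\eta_2=1$; $k=-6$ at $j=-3$, because $\eta_{-9}=0$). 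Hence $\eta_\mathscr{C}\notin X_\eta$. This example also pinpoints which step of your construction breaks: keeping $k+j$ $\mathscr{B}$-free for $j\in F$ is \emph{not} a matter of ``finitely many relevant $b\in\mathscr{B}$'' --- here the congruences from step (A) force $3\divides k$, whereupon $k+3$ is covered by infinitely many elements $3p\in\mathscr{B}$ at once, and no choice of CRT moduli coprime to finitely many obstructions can prevent this. A correct proof of this direction must make essential use of the tautness of $\mathscr{C}$ (for instance through the fact that for taut sets the Mirsky measure $\nu_{\eta_\mathscr{C}}$ has full support equal to $X_{\eta_\mathscr{C}}$, reducing the containment to a statement about measures), an ingredient entirely absent from your sketch.
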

Recall that each $\mathscr{B}$-free subshift is \textbf{essentially minimal} i.e.\ $(X_\eta,\sigma)$ has a unique minimal subset \cite[Theorem~A]{MR3803141}. In fact, the unique minimal subset is also a $\mathscr{B}$-free system, namely, it equals $X_{\eta^*}$ \cite[Corollary~1.5 and Lemma~1.3]{MR4280951}. Moreover, $X_\eta$ is minimal if and only if $\eta$ is a~Toeplitz sequence \cite[Theorem~3.7]{DKK}, see \cite[Theorem~B]{MR3947636}. (This justifies why we call $\mathscr{B}$ minimal in this case.) 
Moreover, for any $\mathscr{B}$, we have
\begin{equation}\label{j3}
X_{\eta^*}\subseteq X_{\eta'}\subseteq X_\eta
\end{equation}
(for the first inclusion, see Remark 3.22 in \cite{DKKu} and for the second, see (27) in \cite{MR4289651}; recall that it was shown earlier that $X_{\eta^*}\subseteq X_\eta$, see \cite{MR4280951}). In fact, if we define partial order $\prec$ on the family of all taut sets via $\mathscr{C}_1\prec \mathscr{C}_2 \iff X_{\eta_{\mathscr{C}_1}}\subseteq X_{\eta_{\mathscr{C}_2}}$ then minimal sets $\mathscr{B}$ are precisely the minimal elements with respect to $\prec$. Moreover, $\mathscr{B}^*$ is the smallest element and $\mathscr{B}'$ is the largest element of $\text{Taut}(\mathscr{B}):=\{\mathscr{C}\subseteq \N : \mathscr{C}\text{ is taut and }X_{\eta_\mathscr{C}}\subseteq X_\eta\}$ with respect to $\prec$ (see the discussion at the end of Section 1.3.2 in~\cite{DKPS}). Furthermore, it follows immediately that $\mathscr{B}^*$ is uniquely determined by the set of multiples of $\mathscr{B}$ and therefore,
for any $\mathscr{B}$, we have $\mathscr{B}^*=(\mathscr{B}^{prim})^*$. (Let us here also justify the statement from the abstract that $\mathscr{B}^*$ is the unique minimal set such that any pattern from $\mathcal{M}_{\mathscr{B}^*}$ appears on $\mathcal{M}_\mathscr{B}$. Clearly, patterns from $\mathcal{M}_{\mathscr{B}^*}$ appear on $\mathcal{M}_\mathscr{B}$ as this is equivalent to $X_{\eta^*}\subseteq X_\eta$. If $\mathscr{C}$ is another minimal set with such a property then $X_{\eta_\mathscr{C}}\subseteq X_\eta$ and we obtain $X_{\eta_\mathscr{C}}=X_{\eta^*}$ by the essential minimality of $X_\eta$. Since, by Remark~\ref{basic_examples}~\eqref{minimal_taut}, any minimal set is taut, it remains to use Theorem~\ref{ThmL} to conclude that $\mathscr{C}=\mathscr{B}^*$.)



As it was shown recently in~\cite{DKPS}, the indicator function $\eta^*$ of the $\mathscr{B}^*$-free set also plays an important role from the measure-theoretic point of view. We have the following description of the set of invariant measures on $X_\eta$, provided that $\eta^*$ is a regular Toeplitz sequence:\footnote{A Toeplitz sequence $x\in \{0,1\}^\Z$ is called regular whenever $d( \{n : x|_{n+s\Z} \text{ is constant for some } s\leq K\}) \to 1$ as $K\to \infty$, cf.\ the discussion in~\cite{DKPS}.}
\begin{equation}\label{eq:2}
\mathcal{M}(X_\eta)=\{N_\ast(\rho): \rho\in\mathcal{M}(\{0,1\}^\Z)^3,\ (\pi_{1,2})_\ast(\rho)=\nu_{\eta^*}\Delta\nu_{\eta}\},
\end{equation}
where $N\colon (\{0,1\}^\Z))^3\to\{0,1\}^{\Z}$ is given by
\[
N(w,x,y)=(1-y)w+yx,
\]
$\nu_{\eta*}\Delta\nu_{\eta}$ stands for the measure for which the pair $(\eta^*,\eta)$ is quasi-generic along any sequence $(N_k)$ realizing $\underline{d}(\mathcal{M}_\mathscr{B})$ and $\pi_{1,2}$ is the projection onto the first two $\{0,1\}^\Z$-coordinates.

\paragraph{Questions and main results}
In~\cite{MR1512943}, for each $\varepsilon>0$, Besicovitch found a set $\mathscr{B}\subseteq \N$ such that $\underline{d}(\mathcal{M}_{\mathscr{B}})<\varepsilon$ and $\overline{d}(\mathcal{M}_{\mathscr{B}})>1/2$. As noted by Keller~\cite[Remark 2]{IrK}, since $\mathcal{M}_\mathscr{B}$ contains arbitrarily long intervals of the form $[T,2T)$, it follows that $\mathscr{B}$ contains infinitely many primes. Thus, $1\in D$ and we conclude that $\mathscr{B}^*=\{1\}$. It was shown in~\cite[Lemma 4.18]{MR3803141} that
\begin{equation}\label{onlyobs}
\text{if $\mathscr{B}'$ is Besicovitch then $\mathscr{B}$ is Besicovitch.}
\end{equation}
Therefore, in the class of sets $\mathscr{B}$ from~\cite{MR1512943}, we have the following: both $\mathscr{B}$ and $\mathscr{B}'$ are non-Besicovitch, while $\mathscr{B}^*$ is Besicovitch. In~\cite{IrK}, Keller modified Besicovitch's construction and showed that there exists a non-Besicovitch minimal set $\mathscr{B}$. Here we have $\mathscr{B}=\mathscr{B}'=\mathscr{B}^*$. Moreover, if $\mathscr{B}$ is Erd\H{o}s then it is Besicovitch, $\mathscr{B}=\mathscr{B}'$ and $\mathscr{B}^*=\{1\}$. With a set $\mathscr{B}$ we associate the triple $ijk\in\{0,1\}^3$ encoding the information whether $\mathscr{B}$, $\mathscr{B}'$, $\mathscr{B}^*$ is Besicovitch or not, e.g. 100 encodes the situation that $\mathscr{B}$ is Besicovitch whereas the remaining two sets are not. It is natural to ask the following.
\begin{question}\label{q1}
Can all triplets $ijk\in\{0,1\}^3$ (with $ij\neq 01$) encoding the information whether each of $\mathscr{B},\mathscr{B}',\mathscr{B}^*$ is Besicovitch occur?
\end{question}
It turns out that~\eqref{onlyobs} is, indeed, the only obstruction and the answer to Question~\ref{q1} is positive. Our main tool are the following two results which, together with some other results from the literature, help us cover the remaining cases, i.e.\ $ijk\in\{110,100,101\}$.
\begin{Thx}\label{taut_bes}
For any taut set $\mathscr{C}$ there exists a Besicovitch set $\mathscr{B}$ with $\mathscr{B}'=\mathscr{C}$.
\end{Thx}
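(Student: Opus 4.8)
The plan is to build $\mathscr B$ by ``diluting'' each element $c\in\mathscr C$ into the scaled block $c\cdot\{\text{large primes}\}$, pushing the primes into ranges that grow so fast that the set of multiples acquires a genuine natural density while its logarithmic density (hence its tautification) is unchanged. If $\mathscr C$ is finite then $\mathcal M_\mathscr{C}$ is eventually periodic, so $\mathscr C$ is already Besicovitch and $\mathscr B=\mathscr C$ works; assume $\mathscr C=\{c_1<c_2<\cdots\}$ is infinite, put $\alpha=\delta(\mathcal M_\mathscr{C})$ and $\beta_m=d(\mathcal M_{\{c_1,\dots,c_m\}})$, and recall from Theorem~\ref{DE} that $\beta_m\uparrow\alpha$. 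I would set
\[
\mathscr B=\bigcup_{i\ge1}\{c_ip : p\text{ prime},\, p>N_i\},
\]
with thresholds $N_1<N_2<\cdots$ chosen recursively, each $N_i$ large enough that $c_iN_i$ is strictly increasing in $i$ and $c_iN_i\ge i\cdot\lcm(c_1,\dots,c_i)$. For each $i$ the set of primes exceeding $N_i$ is Behrend by Remark~\ref{basic_examples}\eqref{Behrend_coprime}.

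First I would verify $\mathscr B'=\mathscr C$ through the description $\mathscr B'=(\mathscr B\cup C)^{prim}$ with $C$ as in~\eqref{skaleC}. Since $\{c_ip:p>N_i\}=c_i\cdot(\text{primes}>N_i)\subseteq\mathscr B$ and the prime factor set is Behrend, every $c_i$ lies in $C$, so $\mathscr C\subseteq C$; also $\mathscr B\subseteq\mathcal M_\mathscr{C}$ is clear. The substantial point is $C\subseteq\mathcal M_\mathscr{C}$. Given $c\in C$ with $c\mathscr A\subseteq\mathscr B$ and $\mathscr A$ Behrend, the set $\{m:cm\in\mathcal M_\mathscr{C}\}$ contains $\mathcal M_\mathscr{A}$ and thus has logarithmic density $1$; but this set is exactly $\mathcal M_\mathscr{D}$ with $\mathscr D=\{c_i/\gcd(c_i,c):i\ge1\}$, so $\mathscr D$ is Behrend. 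Assuming $c\notin\mathcal M_\mathscr{C}$ forces $1\notin\mathscr D$; partitioning $\mathscr D$ by the finitely many values $g=\gcd(c_i,c)\mid c$ and applying Remark~\ref{uwaga31}\eqref{Rk:d} produces a $g_0$ with $\mathscr D_{g_0}=\{c_i/g_0:\gcd(c_i,c)=g_0\}$ Behrend, whence $g_0\mathscr D_{g_0}\subseteq\mathscr C$ contradicts the tautness of $\mathscr C$ by Remark~\ref{uwaga31}\eqref{Rk:e}. Hence $c\in\mathcal M_\mathscr{C}$. From $\mathscr C\subseteq\mathscr B\cup C\subseteq\mathcal M_\mathscr{C}$ and the primitivity of $\mathscr C$, the divisibility-minimal elements of $\mathscr B\cup C$ are precisely those of $\mathscr C$, so $(\mathscr B\cup C)^{prim}=\mathscr C$.

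The remaining and principal task is to show that $\mathscr B$ is Besicovitch. The lower estimate is soft: $\mathcal M_\mathscr{B}$ contains the finite union $\bigcup_{i\le m}\mathcal M_{\{c_ip:p>N_i\}}$, each summand being Besicovitch and differing from $c_i\Z$ only by the density-zero set of $c_i$ times $N_i$-smooth integers, so this union has natural density $\beta_m$; with Theorem~\ref{DE} this gives $\underline d(\mathcal M_\mathscr{B})=\delta(\mathcal M_\mathscr{B})=\alpha$ (the upper bound $\delta(\mathcal M_\mathscr{B})\le\alpha$ coming from $\mathcal M_\mathscr{B}\subseteq\mathcal M_\mathscr{C}$). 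For the upper density I would use the rapid growth of the $N_i$: any $n\le x$ in $\mathcal M_\mathscr{B}$ is divisible by some $c_ip$ with $p>N_i$, forcing $c_iN_i<x$, so with $i^\ast(x)=\max\{i:c_iN_i\le x\}$ one has $\mathcal M_\mathscr{B}\cap[1,x]\subseteq\mathcal M_{\{c_1,\dots,c_{i^\ast}\}}\cap[1,x]$. The latter set is periodic of density $\beta_{i^\ast}\le\alpha$ and period $\lcm(c_1,\dots,c_{i^\ast})$, and $x\ge c_{i^\ast}N_{i^\ast}\ge i^\ast\lcm(c_1,\dots,c_{i^\ast})$ places $x$ far beyond its stabilization scale, giving a count at most $(\alpha+1/i^\ast)x$. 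Since $i^\ast(x)\to\infty$ as $x\to\infty$, this yields $\overline d(\mathcal M_\mathscr{B})\le\alpha$, hence $d(\mathcal M_\mathscr{B})=\alpha$ exists.

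I expect the main obstacle to be the calibration underlying the last paragraph. The visible truncation $\{c_1,\dots,c_{i^\ast(x)}\}$ grows with $x$, so a fixed finite-union argument cannot control $\overline d$ uniformly; the point of letting $N_i$ grow faster than the stabilization scale of the previous truncation is precisely to guarantee that, at every scale $x$, the blocks that have ``switched on'' already sit beyond the transient of their periodic approximation. Making this recursion and the accompanying smooth-number estimate precise, alongside the arithmetic lemma $C\subseteq\mathcal M_\mathscr{C}$ that pins down the tautification, is where the real work lies.
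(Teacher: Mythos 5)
Your proposal is correct, but it reaches Theorem~\ref{taut_bes} by a genuinely different and more elementary route than the paper's. The paper obtains Theorem~\ref{taut_bes} as Corollary~\ref{n1} of Theorem~\ref{jjj}: there the patterns $\mathscr{A}_i$ are primes taken from the pairwise disjoint progressions $2^{i+1}\Z+2^i+1$, the Besicovitch property is verified through Erd\H{o}s's criterion (Theorem~\ref{besicovitch}) combined with the Mertens-type estimate~\eqref{eq:Prachar} for primes in arithmetic progressions (the dyadic moduli make the $i$-th block contribute $\mathrm{O}(2^{-i}\varepsilon)$, hence a summable total), and the identity $\mathscr{B}'=\mathscr{C}$ comes from the dynamical Proposition~\ref{pr:9}, resting on Theorems~\ref{taut_more} and~\ref{ThmL}. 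You instead take $\mathscr{A}_i$ to be \emph{all} primes above a rapidly growing threshold $N_i$ and prove existence of the density by a direct sandwich: every $b\in c_i\mathscr{A}_i$ exceeds $c_iN_i$, so at scale $x$ only the blocks with $i\le i^*(x)$ can contribute, whence $\mathcal{M}_\mathscr{B}\cap[1,x]$ lies in the periodic set $\mathcal{M}_{\{c_1,\dots,c_{i^*(x)}\}}$, of density at most $\alpha$ and of period at most $x/i^*(x)$ thanks to your calibration $c_iN_i\ge i\,\lcm(c_1,\dots,c_i)$; the matching lower bound comes from the finite subunions, which differ from periodic sets only by sets of scaled smooth numbers, of density zero (an elementary fact: for fixed $y$ the $y$-smooth integers up to $x$ number at most $(1+\log_2 x)^{k}$ with $k$ the number of primes up to $y$ --- this is the only analytic input you need). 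I checked the steps and they hold; in particular your arithmetic lemma $C\subseteq\mathcal{M}_\mathscr{C}$, via the gcd partition and Remark~\ref{uwaga31}~\eqref{Rk:d} and~\eqref{Rk:e}, is a correct self-contained substitute for Proposition~\ref{pr:9} (alternatively, Theorem~\ref{pr:9_cor} applies verbatim with $\mathscr{E}_i=\{c_i\}$ and would have saved you that page). The trade-off is scope: your two density bounds meet at $\alpha$ precisely because each $\mathcal{M}_{\mathscr{A}_i}$ has full density, so your construction is tied to Behrend patterns --- the resulting $\mathscr{B}$ contains the scaled Behrend set $c_1\mathscr{A}_1$ and is therefore never taut (Remark~\ref{uwaga31}~\eqref{Rk:e}), and it need not be primitive. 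The paper's heavier machinery buys uniformity of Theorem~\ref{jjj} over arbitrary $\emptyset\neq\mathscr{A}_i\subseteq\mathcal{P}_i\cap[K_i,\infty)$, which is exactly what lets the same density theorem also serve Corollary~\ref{n2} (Theorem~\ref{toeplitz_bes}, where the $\mathscr{A}_i$ must be thin and pairwise coprime so that $\mathscr{B}$ is taut), the primitivity claim, and the examples of Section~\ref{triples}. For Theorem~\ref{taut_bes} itself, your proof is complete and simpler.
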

\begin{Thx}\label{toeplitz_bes}
For any minimal set $\mathscr{D}$, there exists a Besicovitch taut set $\mathscr{B}$ with $\mathscr{B}^*=\mathscr{D}$.
\end{Thx}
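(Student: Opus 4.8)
The plan is to build $\mathscr{B}$ as a \emph{thin} set, since then both required properties come for free: every thin set is Besicovitch (Remark~\ref{basic_examples}) and every primitive thin set is taut (Remark~\ref{basic_examples}\eqref{Re:B}). This reduces the theorem to arranging $\mathscr{B}^{*}=\mathscr{D}$, which I would control entirely through the identity $\mathscr{B}^{*}=\prim{(\mathscr{B}\cup D)}$ with $D$ as in~\eqref{skaleD}. The one structural input about $\mathscr{D}$ that I will lean on is Remark~\ref{uwaga31}\eqref{Rk:e_min}: a minimal set admits no relation $c\mathscr{A}\subseteq\mathscr{D}$ with $\mathscr{A}$ infinite and pairwise coprime (in particular, with $c=1$, $\mathscr{D}$ has no infinite pairwise coprime subset, so the set $D$ attached to $\mathscr{D}$ itself is empty and $\mathscr{D}^{*}=\mathscr{D}$).

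For the construction I would fix pairwise disjoint infinite sets of primes $(P_{d})_{d\in\mathscr{D}}$, all of whose primes are distinct and do not divide any element of $\mathscr{D}$, chosen sparse enough that $\sum_{d\in\mathscr{D}}\frac1d\sum_{p\in P_{d}}\frac1p<\infty$, and set
\[
\mathscr{B}=\{dp : d\in\mathscr{D},\ p\in P_{d}\}.
\]
Then $\sum_{b\in\mathscr{B}}1/b<\infty$, so $\mathscr{B}$ is thin; and it is primitive, because if $d_{1}p_{1}\mid d_{2}p_{2}$ then $p_{1}\mid d_{2}p_{2}$ forces $p_{1}=p_{2}$ (the $p$'s being distinct primes not dividing elements of $\mathscr{D}$), whence $d_{1}=d_{2}$ by disjointness of the $P_{d}$. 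Thus $\mathscr{B}$ is Besicovitch and taut, and it remains to identify $D$.

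The crux is to prove $D=\mathscr{D}$. The inclusion $\mathscr{D}\subseteq D$ is immediate, since $dP_{d}\subseteq\mathscr{B}$ and $P_{d}$ is infinite and pairwise coprime. For the reverse inclusion, take $d'\in D$ and an infinite pairwise coprime $\mathscr{A}$ with $d'\mathscr{A}\subseteq\mathscr{B}$, and write $d'a=d(a)\,p(a)$ with $d(a)\in\mathscr{D}$ and $p(a)\in P_{d(a)}$. For each $a$ either $p(a)\mid a$ or $p(a)\mid d'$. If $p(a)\mid d'$ occurred for infinitely many $a$, then, as $d'$ has only finitely many prime divisors, a single prime $q\mid d'$ would serve as $p(a)$ infinitely often, giving $d(a)=(d'/q)\,a$ and hence $(d'/q)\mathscr{A}''\subseteq\mathscr{D}$ for an infinite pairwise coprime $\mathscr{A}''$, contradicting minimality of $\mathscr{D}$ via Remark~\ref{uwaga31}\eqref{Rk:e_min}. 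Hence $p(a)\mid a$ for all but finitely many $a$; writing $a=p(a)\,a^{\flat}$ then gives $d'a^{\flat}=d(a)\in\mathscr{D}$. The $a^{\flat}$ are pairwise coprime (they divide the pairwise coprime $a$), so if infinitely many of them exceeded $1$ we would again obtain an infinite pairwise coprime dilation $d'\mathscr{A}^{\flat}\subseteq\mathscr{D}$, once more impossible; therefore $a^{\flat}=1$ for some $a$, i.e.\ $d'=d(a)\in\mathscr{D}$.

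Granting $D=\mathscr{D}$, every $dp\in\mathscr{B}$ is a proper multiple of $d\in\mathscr{D}$ and $\mathscr{D}$ is primitive, so $\mathscr{B}^{*}=\prim{(\mathscr{B}\cup\mathscr{D})}=\mathscr{D}$, completing the proof. I expect the reverse inclusion $D\subseteq\mathscr{D}$ to be the main obstacle: the danger is that the dilation structure secretly produces a \emph{proper divisor} of some element of $\mathscr{D}$ (or even $1$) inside $D$, which would make $\mathscr{B}^{*}$ strictly smaller than $\mathscr{D}$. The decisive design choice that averts this is to dilate each $d$ only by single large primes, which collapses any putative new member of $D$ into an infinite pairwise coprime dilation inside $\mathscr{D}$ — exactly what minimality rules out.
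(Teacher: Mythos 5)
Your core computation --- the identification $D=\mathscr{D}$ via Remark~\ref{uwaga31}~\eqref{Rk:e_min} --- is correct, and it is a genuinely more arithmetic route than the paper's: the paper obtains $\mathscr{B}^*=\mathscr{D}$ dynamically (Proposition~\ref{wersja2}: the Toeplitz property forces $X_{\eta_{\mathscr{B}\cup\mathscr{D}}}$ to be the unique minimal subset of $X_\eta$, then tautness and Theorem~\ref{ThmL} give equality of sets), and gets the Besicovitch property from Theorem~\ref{jjj}, whereas you compute $D$ directly from the definition and get Besicovitchness from thinness. However, there is a genuine gap in your setup: you require the primes in $\bigcup_{d}P_d$ to divide \emph{no} element of $\mathscr{D}$, and such primes need not exist. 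A minimal set can meet every prime. Take $\mathscr{D}=\{2^k q_k : k\geq 1\}$, where $q_k$ denotes the $k$-th odd prime. This set is primitive (if $2^kq_k\mid 2^jq_j$ with $k<j$ then $q_k\mid q_j$, impossible; if $k>j$ then $2^k\nmid 2^jq_j$), and it admits no relation $c\mathscr{A}\subseteq\mathscr{D}$ with $\mathscr{A}$ infinite pairwise coprime: writing $v_2$ for the $2$-adic valuation, $ca=2^{k_a}q_{k_a}$ gives $v_2(a)=k_a-v_2(c)$ with the exponents $k_a$ pairwise distinct, so all but at most one $a\in\mathscr{A}$ are even, forcing $|\mathscr{A}|\leq 2$. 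Hence $\mathscr{D}$ is minimal by Remark~\ref{uwaga31}~\eqref{Rk:e_min}; yet $2$ divides every element and $q_k$ divides $2^kq_k$, so \emph{every} prime divides some element of $\mathscr{D}$, and your sets $P_d$ cannot be chosen at all. As written, the construction cannot even begin for this $\mathscr{D}$.

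Fortunately, the offending hypothesis is used only in your primitivity check, and the gap is repairable: take the $P_d$ pairwise disjoint, thin as you prescribe, and with every $p\in P_d$ satisfying $p>d$. Primitivity then follows as in the paper's own proof of Theorem~\ref{jjj}: if $d_1p_1\mid d_2p_2$ with $d_1=d_2$, then $p_1=p_2$; if $d_1\neq d_2$, disjointness gives $p_1\neq p_2$, hence $p_1\mid d_2$; moreover $d_1\mid d_2p_2$, and $p_2\nmid d_1$ would yield $d_1\mid d_2$, contradicting primitivity of $\mathscr{D}$, so $p_2\mid d_1$, and then $p_2\leq d_1<p_1\leq d_2<p_2$, a contradiction. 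Your argument for the crux $D\subseteq\mathscr{D}$ never uses the offending hypothesis, and neither does the final primitivization step: no $dp\in\mathscr{B}$ can divide an element $d_1\in\mathscr{D}$ in any case, since $d\mid d_1$ would force $d=d_1$ by primitivity of $\mathscr{D}$ and then $dp\mid d$ is absurd. With this one modification your proof is complete and, apart from the verification route, builds essentially the same set as the paper's Corollary~\ref{n2}.
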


A seemingly unrelated problem appeared while the authors of~\cite{DKPS} were trying to prove a conjecture from~\cite{MR4280951} on the form of invariant measures on $X_\eta$. Namely, they thought first that a reasonable way to approach it, would be to get a description of invariant measures for taut $\mathscr{B}$'s first and then pass to all $\mathscr{B}$'s by proving that the upper Banach density of the set where $\eta$ and $\eta'$ differ is equal to zero -- this would immediately imply that the invariant measures on $X_\eta$ and $X_{\eta'}$ agree. That is, there was the following problem.
\begin{question}\label{q2}
Is $d^*(\mathcal{M}_{\mathscr{B}'}\setminus \mathcal{M}_\mathscr{B})=0$ always true?
\end{question}
Question~\ref{q2} remained unsolved at the time when~\cite{DKPS} was written (and was not mentioned there explicitly). We started working on it later and it turned out that the answer to this question is negative. More precisely, we have the following.
\begin{Propx}\label{prop2}
For any non-Besicovitch taut set $\mathscr{C}$, there exists $\mathscr{B}$ such that $\mathscr{B}'=\mathscr{C}$ and
$\overline{d}(\mathcal{M}_{\mathscr{B}'} \setminus\mathcal{M}_{\mathscr{B}})>0$.
\end{Propx}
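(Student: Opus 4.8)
The plan is to reduce the statement to a one-line density computation and then invoke Theorem~\ref{taut_bes} as a black box. The key observation is that the inequality $\overline{d}(\mathcal{M}_{\mathscr{B}'}\setminus\mathcal{M}_{\mathscr{B}})>0$ will follow as soon as we exhibit a single $\mathscr{B}$ with $\mathscr{B}'=\mathscr{C}$ whose set of multiples has \emph{natural} density strictly below $\overline{d}(\mathcal{M}_{\mathscr{C}})$. By Theorem~\ref{taut_bes} there is a Besicovitch set $\mathscr{B}$ with $\mathscr{B}'=\mathscr{C}$, and this is exactly the set I would take: being Besicovitch forces $d(\mathcal{M}_{\mathscr{B}})$ to exist, while the hypothesis that $\mathscr{C}$ is non-Besicovitch supplies the strict gap $\underline{d}(\mathcal{M}_{\mathscr{C}})<\overline{d}(\mathcal{M}_{\mathscr{C}})$ that drives the argument.

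First I would record the elementary bookkeeping. Since $\eta'\le\eta$ (Theorem~\ref{taut}), we have $\mathcal{M}_{\mathscr{B}}\subseteq\mathcal{M}_{\mathscr{B}'}=\mathcal{M}_{\mathscr{C}}$, so writing $\mathcal{M}_{\mathscr{C}}=\mathcal{M}_{\mathscr{B}}\cup(\mathcal{M}_{\mathscr{C}}\setminus\mathcal{M}_{\mathscr{B}})$ and using subadditivity of the upper density gives
\[
\overline{d}(\mathcal{M}_{\mathscr{C}})\le \overline{d}(\mathcal{M}_{\mathscr{B}})+\overline{d}(\mathcal{M}_{\mathscr{C}}\setminus\mathcal{M}_{\mathscr{B}}),
\]
i.e.\ $\overline{d}(\mathcal{M}_{\mathscr{B}'}\setminus\mathcal{M}_{\mathscr{B}})\ge \overline{d}(\mathcal{M}_{\mathscr{C}})-\overline{d}(\mathcal{M}_{\mathscr{B}})$. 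It therefore suffices to show $\overline{d}(\mathcal{M}_{\mathscr{B}})<\overline{d}(\mathcal{M}_{\mathscr{C}})$.

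Next I would evaluate $\overline{d}(\mathcal{M}_{\mathscr{B}})$. As $\mathscr{B}$ is Besicovitch, $\overline{d}(\mathcal{M}_{\mathscr{B}})=\underline{d}(\mathcal{M}_{\mathscr{B}})=\delta(\mathcal{M}_{\mathscr{B}})$, the last step by Theorem~\ref{DE}. Tautification leaves the logarithmic density of the set of multiples unchanged: this is immediate from $\nu_\eta=\nu_{\eta'}$ in Theorem~\ref{taut}, since $\delta(\mathcal{M}_{\mathscr{B}})=\nu_\eta(\{x:x_0=0\})$ and the same holds for $\mathscr{B}'=\mathscr{C}$. Hence $\delta(\mathcal{M}_{\mathscr{B}})=\delta(\mathcal{M}_{\mathscr{C}})=\underline{d}(\mathcal{M}_{\mathscr{C}})$, again by Theorem~\ref{DE}, and so
\[
\overline{d}(\mathcal{M}_{\mathscr{B}})=\underline{d}(\mathcal{M}_{\mathscr{C}})<\overline{d}(\mathcal{M}_{\mathscr{C}}),
\]
the strict inequality being precisely the failure of the natural density for the non-Besicovitch set $\mathscr{C}$. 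Feeding this into the previous display yields $\overline{d}(\mathcal{M}_{\mathscr{B}'}\setminus\mathcal{M}_{\mathscr{B}})\ge\overline{d}(\mathcal{M}_{\mathscr{C}})-\underline{d}(\mathcal{M}_{\mathscr{C}})>0$, as required.

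I do not anticipate a genuine obstacle: once Theorem~\ref{taut_bes} is granted, the whole proof is density accounting. The only point needing care is the claim that tautification preserves $\delta(\mathcal{M}_{\cdot})$, which I would justify through the identity $\delta(\mathcal{M}_{\mathscr{B}})=\nu_\eta(\{x:x_0=0\})$ (valid because the sequence realizing the lower density in Theorem~\ref{DE} is the one along which $\eta$ is quasi-generic for $\nu_\eta$) together with $\nu_\eta=\nu_{\eta'}$. Everything else is the subadditivity of the upper density and the defining property of a Besicovitch set.
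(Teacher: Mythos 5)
Your proposal is correct and is essentially the paper's own deduction of this proposition (the remark following Theorem~\ref{prop1}): there, too, one takes the Besicovitch set $\mathscr{B}$ with $\mathscr{B}'=\mathscr{C}$ from Theorem~\ref{taut_bes} and computes $\overline{d}(\mathcal{M}_{\mathscr{C}}\setminus\mathcal{M}_{\mathscr{B}})=\overline{d}(\mathcal{M}_{\mathscr{C}})-d(\mathcal{M}_{\mathscr{B}})=\overline{d}(\mathcal{M}_{\mathscr{C}})-\underline{d}(\mathcal{M}_{\mathscr{C}})>0$, the only cosmetic difference being that the paper justifies $d(\mathcal{M}_{\mathscr{B}})=\underline{d}(\mathcal{M}_{\mathscr{C}})$ by citing~\eqref{density_difference} while you route it through $\nu_\eta=\nu_{\eta'}$ and Theorem~\ref{DE}, and you use subadditivity where an exact equality holds. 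Be aware only that the paper regards this as the ``expensive'' proof, since it rests on the quite involved Theorem~\ref{taut_bes}, and it also gives a shorter self-contained construction (Proposition~\ref{bazujenaDE}) that avoids Theorem~\ref{taut_bes} entirely.
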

\begin{Thx}\label{prop1}
There exist $\mathscr{B}$ such that both $\mathscr{B}$ and $\mathscr{B}'$ are Besicovitch and $d^*(\mathcal{M}_{\mathscr{B}'} \setminus \mathcal{M}_{\mathscr{B}})>0$.
\end{Thx}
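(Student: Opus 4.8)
The plan is to first pin down what ``both Besicovitch'' forces. Since $\nu_\eta=\nu_{\eta'}$ (Theorem~\ref{taut}) and the Mirsky measure of the cylinder $[0]$ records exactly $\delta(\mathcal{M}_\mathscr{B})$, we have $\delta(\mathcal{M}_\mathscr{B})=\delta(\mathcal{M}_{\mathscr{B}'})$; if in addition both sets are Besicovitch then $d(\mathcal{M}_\mathscr{B})=d(\mathcal{M}_{\mathscr{B}'})$, and as $\mathcal{M}_\mathscr{B}\subseteq\mathcal{M}_{\mathscr{B}'}$ the difference has \emph{natural} density $0$. Hence the target $d^*>0$ can only be realized on a sparse sequence of long windows. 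A second observation explains why no cheap construction works: for a Behrend set $\mathscr{A}$ one has $d^*(\mathcal{F}_\mathscr{A})=0$, because $\mathcal{M}_{\mathscr{A}\cap[1,K]}$ is periodic with density $\to\delta(\mathcal{M}_\mathscr{A})=1$ (Theorem~\ref{DE}), so $\mathcal{F}_\mathscr{A}$ lies inside a periodic set of Banach density $<\varepsilon$. Consequently the part of the difference inside any fixed class $c\Z$ is contained in $c\mathcal{F}_\mathscr{A}$ (for the Behrend witness of $c\in C$) and has Banach density $0$; the discrepancy must therefore be assembled from \emph{infinitely many} scalings $c\to\infty$, each active on a single window.

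This dictates the construction. Fix rapidly increasing $T_i$ and lengths $L_i\to\infty$ with $L_i=o(T_i)$, $\sum_i L_i/T_i<\tfrac18$ and $\sum_{k<i}L_k=o(L_i)$ (e.g.\ $T_i=2^{3\cdot 2^i}$, $L_i=2^{2^i}$), put $W_i=[T_i,T_i+L_i)$ and $\mathcal{C}_i=\N\cap W_i$. To force every $c\in\mathcal{C}_i$ into the set $C$ of~\eqref{skaleC} (hence into $\mathscr{B}'$), attach to each such $c$ a Behrend set of primes $\mathscr{A}^{(c)}$ with $\sum_{p\in\mathscr{A}^{(c)}}1/p=\infty$, chosen pairwise disjoint over all $c$, and set $\mathscr{B}=\bigcup_i\bigcup_{c\in\mathcal{C}_i}c\mathscr{A}^{(c)}$. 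The decisive point is that every element of the $i$-th block is $\ge 2T_i$, so this block covers \emph{no} point of its own window $W_i$; and since $\mathcal{M}_{c\mathscr{A}^{(c)}}\subseteq c\Z$, the number of points of $W_i$ covered by any other block $k$ is at most $\sum_{c\in\mathcal{C}_k}(L_i/c+1)\le L_kL_i/T_k+L_k$, a fraction $\le L_k/T_k+L_k/L_i$ of $W_i$ (blocks with $k>i$ contribute nothing, their elements exceeding $T_i+L_i$). Summing, $\mathscr{B}$ covers less than $\tfrac14$ of $W_i$ for large $i$, while $W_i\subseteq\mathcal{M}_{\mathscr{B}'}$ since $\mathcal{C}_i\subseteq\mathscr{B}'$. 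Thus more than $\tfrac34$ of each $W_i$ lies in $\mathcal{M}_{\mathscr{B}'}\setminus\mathcal{M}_\mathscr{B}$, and as $L_i\to\infty$ this gives $d^*(\mathcal{M}_{\mathscr{B}'}\setminus\mathcal{M}_\mathscr{B})\ge\tfrac34$.

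It remains to verify the two Besicovitch claims and to identify $\mathscr{B}'$. For $\mathscr{B}$: each finite union $\bigcup_{i\le m}(\text{block }i)$ is a finite union of the Besicovitch sets $c\mathcal{M}_{\mathscr{A}^{(c)}}$ (each of density $1/c$), hence Besicovitch by Remark~\ref{uwaga31}\eqref{union_Besicovitch}, while the tail satisfies $\overline{d}\big(\bigcup_{k>m}\text{block }k\big)\le\sum_{k>m}L_k/T_k\to0$; sandwiching $\mathcal{M}_\mathscr{B}$ between the two forces $\overline{d}(\mathcal{M}_\mathscr{B})-\underline{d}(\mathcal{M}_\mathscr{B})\to0$, so $d(\mathcal{M}_\mathscr{B})$ exists. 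For $\mathscr{B}'$: every element of $\mathscr{B}$ is a multiple of some $c\in C$, whence $\mathscr{B}'=(\mathscr{B}\cup C)^{prim}=C^{prim}$, and one checks $C=\bigcup_i\mathcal{C}_i$. Indeed, any $\tilde c\notin\bigcup_i\mathcal{C}_i$ with $\tilde c\in C$ would require $\{a:\tilde c a\in\mathscr{B}\}$ to be Behrend; by disjointness of the prime witnesses this set is contained in $\bigcup_{\tilde c\mid c',\,c'\in\bigcup_i\mathcal{C}_i}(c'/\tilde c)\mathscr{A}^{(c')}$, whose set of multiples has logarithmic density at most $\sum_{\tilde c\mid c'}\tilde c/c'$, and the rapid growth of the $T_i$ together with $\sum_i L_i/T_i<1$ makes this $<1$, so it is not Behrend. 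Hence $\mathscr{B}'=(\bigcup_i\mathcal{C}_i)^{prim}$ is primitive and thin (as $\sum_c 1/c\le\sum_i L_i/T_i<\infty$), therefore taut and Besicovitch by Remark~\ref{basic_examples}\eqref{Re:B} and the fact that thin sets are Besicovitch.

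The main obstacle is the simultaneous control of two opposing requirements: making $\mathscr{B}$ Besicovitch pushes toward witnesses whose sets of multiples do not accumulate, whereas keeping a genuine discrepancy on each $W_i$ requires those windows to stay essentially uncovered by $\mathscr{B}$. The reconciliation is the separation of scales with summable block densities $\sum_i L_i/T_i<\infty$, which is precisely what both the tail estimate for $\mathscr{B}$ and the ``no extra elements of $C$'' computation consume. I expect the most delicate step to be the last one: verifying that no unintended scaling $\tilde c$ acquires a Behrend witness inside $\mathscr{B}$, since this is what guarantees that the tautification is the harmless thin set $\bigcup_i\mathcal{C}_i$ rather than something larger and possibly non-Besicovitch.
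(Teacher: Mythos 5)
Your overall architecture is sound and genuinely different from the paper's: the window-covering estimate giving $d^*(\mathcal{M}_{\mathscr{B}'}\setminus\mathcal{M}_{\mathscr{B}})\ge \tfrac34$ is correct, and so is your direct sandwich proof that $\mathscr{B}$ is Besicovitch (the paper gets this for free from \eqref{onlyobs}). The gap sits exactly in the step you yourself flag as most delicate: the identification $C=\bigcup_i\mathcal{C}_i$, on which the Besicovitchness of $\mathscr{B}'$ rests. Two concrete problems. First, the containment $\{a:\tilde c a\in\mathscr{B}\}\subseteq\bigcup_{\tilde c\mid c',\,c'\in\bigcup_i\mathcal{C}_i}(c'/\tilde c)\mathscr{A}^{(c')}$ is false as stated: from $\tilde c a=c'p$ you may conclude $\tilde c\mid c'$ only when $\gcd(\tilde c,p)=1$; if $p\mid\tilde c$, $p\in\mathscr{A}^{(c')}$ and $(\tilde c/p)\mid c'$, you pick up the extra element $a=c'/(\tilde c/p)$. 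Disjointness of the $\mathscr{A}^{(c)}$ makes the set of such exceptions finite (at most one per prime factor of $\tilde c$), and a finite set avoiding $1$ is never Behrend, so this is repairable via Remark~\ref{uwaga31}~\eqref{Rk:d} --- but the repair must be made. Second, and more seriously, the inequality $\sum_{c':\tilde c\mid c'}\tilde c/c'<1$ does not follow from ``rapid growth of the $T_i$ together with $\sum_i L_i/T_i<\tfrac18$''. Let $i_0$ be the first window containing a multiple of $\tilde c$ and take $L_{i_0}<\tilde c<T_{i_0}$, say $\tilde c=T_{i_0}-1$: the estimate you are implicitly using bounds the window-$i_0$ contribution by $L_{i_0}/T_{i_0}+\tilde c/T_{i_0}\approx 1$, and adding the $\tfrac18$ coming from the remaining windows pushes the bound above $1$. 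The claim is still true, but for a reason never stated in your argument: $\tilde c\notin\bigcup_i\mathcal{C}_i$ forces $\tilde c<T_{i_0}$, hence every quotient $c'/\tilde c$ is an integer $\ge 2$, so window $i_0$ contributes at most $\max\{1/2,\,2L_{i_0}/T_{i_0}\}$; only with this observation does the total come out at most $1/2+1/8+o(1)<1$. As written, the key step fails.

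Both defects are avoidable, and this is precisely where the paper's route differs from yours. Proposition~\ref{pr:9} requires only a subset $\mathscr{C}\subseteq C$ such that $(\mathscr{B}\cup\mathscr{C})^{prim}$ is taut; in your construction take $\mathscr{C}=\bigcup_i\mathcal{C}_i$. Every element of $\mathscr{B}$ is a proper multiple of its scale $c\in\mathscr{C}$, so $(\mathscr{B}\cup\mathscr{C})^{prim}\subseteq\mathscr{C}$ is a primitive subset of a thin set, hence taut by Remark~\ref{basic_examples}~\eqref{Re:B}, and Proposition~\ref{pr:9} gives that $\mathscr{B}'=(\mathscr{B}\cup\mathscr{C})^{prim}$ is thin, hence Besicovitch; no analysis of unintended scalings $\tilde c$ is needed, and $\mathcal{M}_{\mathscr{B}'}=\mathcal{M}_{\mathscr{B}\cup\mathscr{C}}\supseteq\bigcup_i W_i$ still holds, so your density estimate survives verbatim. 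The paper's own proof of Theorem~\ref{prop11} makes the analogous move but additionally prunes each window, taking $\mathscr{E}_{i+1}$ inside $[t_{i+1},t_{i+1}+T_{i+1}-1]\setminus\mathcal{M}_{\mathscr{E}_1\cup\dots\cup\mathscr{E}_i}$, so that the scale set $\mathscr{E}$ is itself primitive and disjoint from $\mathcal{M}_{\mathscr{B}}$ and carries the discrepancy, with the tautification identified by Theorem~\ref{pr:9_cor}. In short: your construction works and parts of it are even more self-contained than the paper's, but your verification of what $\mathscr{B}'$ is has a genuine gap, which either needs the fixes above or should be replaced by an appeal to Proposition~\ref{pr:9}.
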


\begin{remark}
Proposition~\ref{prop2} can be deduced from Theorem~\ref{taut_bes} in the following way. Consider a non-Besicovich taut set $\mathscr{D}$, e.g., Keller's non-Besicovich minimal set. As any minimal set is taut (see Remark~\ref{basic_examples}~\eqref{minimal_taut}), by Theorem~\ref{taut_bes}, there exists a Besicovich set $\mathscr{B}$ with $\mathscr{B}'=\mathscr{D}$. Then
\[
\overline{d}(\mathcal{M}_\mathscr{D}\setminus \mathcal{M}_\mathscr{B})=\overline{d}(\mathcal{M}_\mathscr{D})-d(\mathcal{M}_\mathscr{B})=\overline{d}(\mathcal{M}_\mathscr{D})-\underline{d}(\mathcal{M}_\mathscr{D})>0
\]
(in the last equality we use \eqref{density_difference}). However, the proof of Theorem~\ref{taut_bes} is quite involved and Proposition~\ref{prop2} has another shorter, more direct proof.
\end{remark}


\paragraph{Unions of rescaled patterns}
Let us give here a somewhat more detailed summary of our results and explain why Question 1 and Question 2 are actually related. To approach them, we study sets $\mathscr{B}$ of the form
\begin{equation}\label{forma}
\mathscr{B}=\bigcup_{i\geq 1}\mathscr{E}_i\mathscr{A}_i,
\end{equation}
where $\mathscr{E}_i,\mathscr{A}_i\subset\N$. One can think of the sets $\mathscr{E}_i$ as of ``scales'' and  of the sets $\mathscr{A}_i$ as of ``patterns''.
Often we will have $\mathscr{A}_i\subseteq \mathcal{P}_i \cap [K_i,\infty)$, where
\[
\mathcal{P}_i \text{ stands for all primes in the arithmetic progression }2^{i+1}\Z+2^i+1
\]
and $K_i$ is sufficiently large, while $\mathscr{E}=\bigcup_{i\geq 1}\mathscr{E}_i$ will either be taut or minimal. Additional properties of scales and patterns will result in additional properties of $\mathscr{B}$, $\mathscr{B}'$ and $\mathscr{B}^*$. 

In Section~\ref{des}, we prove the following.
\begin{theorem}\label{pr:9_cor}
Let $\mathscr{B}=\bigcup_{i\geq 1}\mathscr{E}_i\mathscr{A}_i$, where $\mathscr{E}=\bigcup_{i\geq 1}\mathscr{E}_i$ is taut and $\mathscr{A}_i$ are Behrend sets. Then $\mathscr{B}'=\mathscr{E}$.
\end{theorem}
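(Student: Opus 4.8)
The plan is to work directly from the explicit description $\mathscr{B}'=(\mathscr{B}\cup C)^{prim}$ with $C$ as in \eqref{skaleC}, and to show that $\mathcal{M}_{\mathscr{B}\cup C}=\mathcal{M}_{\mathscr{E}}$. Since $\mathscr{E}$ is taut, hence primitive, and the primitivization is the unique primitive set with a given set of multiples, this identity immediately yields $\mathscr{B}'=(\mathscr{B}\cup C)^{prim}=\mathscr{E}$.

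First I would record two easy inclusions. Every $b\in\mathscr{B}$ has the form $b=ea$ with $e\in\mathscr{E}_i$ and $a\in\mathscr{A}_i$, so $e\divides b$ and thus $\mathcal{M}_\mathscr{B}\subseteq\mathcal{M}_\mathscr{E}$. Conversely, for each $e\in\mathscr{E}_i$ we have $e\mathscr{A}_i\subseteq\mathscr{E}_i\mathscr{A}_i\subseteq\mathscr{B}$ with $\mathscr{A}_i$ Behrend, so $e\in C$ by the very definition of $C$; hence $\mathscr{E}\subseteq C$, and in particular $\mathcal{M}_\mathscr{E}\subseteq\mathcal{M}_{\mathscr{B}\cup C}$.

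The heart of the matter is the reverse inclusion $\mathcal{M}_C\subseteq\mathcal{M}_\mathscr{E}$, i.e.\ that every $c\in C$ is already a multiple of some element of $\mathscr{E}$. I would argue by contradiction: assume $c\in C\setminus\mathcal{M}_\mathscr{E}$ and fix a Behrend set $\mathscr{A}$ with $c\mathscr{A}\subseteq\mathscr{B}\subseteq\mathcal{M}_\mathscr{E}$. For each $a\in\mathscr{A}$ choose $e_a\in\mathscr{E}$ with $e_a\divides ca$, and set $g_a=\gcd(e_a,c)$ and $d_a=e_a/g_a$. Then $d_a\divides a$ (because $\gcd(e_a/g_a,c/g_a)=1$), and since $c\notin\mathcal{M}_\mathscr{E}$ forces $e_a\ndivides c$, we get $d_a\geq 2$. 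As the divisors $g_a$ of $c$ take only finitely many values $g^{(1)},\dots,g^{(r)}$, I would group the elements $a$ accordingly into sets $\mathscr{A}^{(j)}$ and put $\mathscr{D}^{(j)}=\{d_a:a\in\mathscr{A}^{(j)}\}\subseteq\N\setminus\{1\}$, so that $g^{(j)}\mathscr{D}^{(j)}\subseteq\mathscr{E}$ for every $j$.

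To close, note $\mathscr{A}\subseteq\mathcal{M}_{\mathscr{D}}$ for $\mathscr{D}=\bigcup_j\mathscr{D}^{(j)}$, whence $\delta(\mathcal{M}_\mathscr{D})\geq\delta(\mathcal{M}_\mathscr{A})=1$; thus $\mathscr{D}$ is Behrend, and being a finite union, one of its pieces $\mathscr{D}^{(j_0)}$ is Behrend by Remark~\ref{uwaga31}\eqref{Rk:d}. But then $g^{(j_0)}\mathscr{D}^{(j_0)}\subseteq\mathscr{E}$ with $\mathscr{D}^{(j_0)}$ Behrend contradicts the tautness of $\mathscr{E}$ through Remark~\ref{uwaga31}\eqref{Rk:e}. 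This contradiction gives $C\subseteq\mathcal{M}_\mathscr{E}$, so $\mathcal{M}_{\mathscr{B}\cup C}=\mathcal{M}_\mathscr{B}\cup\mathcal{M}_C=\mathcal{M}_\mathscr{E}$, and primitivity of $\mathscr{E}$ finishes the argument. I expect the gcd bookkeeping in this final step — isolating a single Behrend ``pattern'' $\mathscr{D}^{(j_0)}$ that is rescaled into $\mathscr{E}$ — to be the one genuinely delicate point, since it is exactly there that the tautness hypothesis is consumed; everything else is formal manipulation of sets of multiples.
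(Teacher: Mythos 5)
Your proposal is correct, and it takes a genuinely different route from the paper's. The paper deduces Theorem~\ref{pr:9_cor} from Proposition~\ref{pr:9} (applied with $\mathscr{C}=\mathscr{E}$, after noting $\mathscr{E}\subseteq C$ and $(\mathscr{B}\cup\mathscr{E})^{prim}=\mathscr{E}$), and that proposition is proved with dynamical machinery: the chain $\eta^*\leq \eta'\leq \eta_{(\mathscr{B}\cup\mathscr{C})^{prim}}\leq\eta$ is translated into subshift inclusions via Theorem~\ref{taut_more}, the maximality of $\mathscr{B}'$ in $\text{Taut}(\mathscr{B})$ with respect to $\prec$ is invoked, and Theorem~\ref{ThmL} finishes. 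You, by contrast, never leave arithmetic: starting from the quoted formula $\mathscr{B}'=(\mathscr{B}\cup C)^{prim}$ you prove $\mathcal{M}_{\mathscr{B}\cup C}=\mathcal{M}_{\mathscr{E}}$ and conclude by uniqueness of the primitive set with a prescribed set of multiples. Your key step, $C\subseteq\mathcal{M}_{\mathscr{E}}$, checks out: $d_a=e_a/\gcd(e_a,c)$ divides $a$, can equal $1$ only if $e_a\divides c$, the finitely many divisors $\gcd(e_a,c)$ of $c$ give the grouping, $\mathscr{D}\subseteq\N\setminus\{1\}$ is Behrend because logarithmic densities always exist and are monotone (Theorem~\ref{DE}), an obvious induction extends Remark~\ref{uwaga31}~\eqref{Rk:d} to finite unions, and Remark~\ref{uwaga31}~\eqref{Rk:e} then contradicts the tautness of $\mathscr{E}$. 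As for what each approach buys: the paper's argument is short because it reuses the dynamical theory already set up, and Proposition~\ref{pr:9} is needed again later (Examples~\ref{4.1}, \ref{EX1}, \ref{110}, \ref{000}); your argument is elementary and dynamics-free, consuming only the formula for $\mathscr{B}'$ and the Behrend facts. Note, moreover, that your proof is not actually less general than the paper's intermediate result: running the same gcd argument with $\mathscr{E}$ replaced by $(\mathscr{B}\cup\mathscr{C})^{prim}$ (which automatically satisfies $\mathscr{B}\subseteq\mathcal{M}_{(\mathscr{B}\cup\mathscr{C})^{prim}}$) yields $\mathcal{M}_{\mathscr{B}\cup C}=\mathcal{M}_{\mathscr{B}\cup\mathscr{C}}$, hence a dynamics-free proof of Proposition~\ref{pr:9} itself.
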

\begin{theorem}\label{wersja2_cor1}
Let $\mathscr{B}=\bigcup_{i\geq 1}\mathscr{E}_i\mathscr{A}_i$, where $\mathscr{E}=\bigcup_{i\geq 1}\mathscr{E}_i$ is minimal and $\mathscr{A}_i$ are infinite pairwise coprime sets. Then $\mathscr{B}^*=\mathscr{E}$.
\end{theorem}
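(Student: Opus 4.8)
The plan is to compute $\mathscr{B}^{*}=(\mathscr{B}\cup D)^{prim}$ directly from the definition \eqref{skaleD} of $D$ and to show that $\mathcal{M}_{\mathscr{B}\cup D}=\mathcal{M}_{\mathscr{E}}$. Since $\mathscr{E}$ is minimal it is in particular primitive, and the primitivization of a set is the unique primitive set with the same set of multiples; hence an equality of sets of multiples immediately gives $\mathscr{B}^{*}=\mathscr{E}^{prim}=\mathscr{E}$. So everything reduces to checking $\mathcal{M}_{\mathscr{B}\cup D}=\mathcal{M}_{\mathscr{E}}$.

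I would dispatch the easy inclusions first. For each $e\in\mathscr{E}$ pick $i$ with $e\in\mathscr{E}_i$; then $e\mathscr{A}_i\subseteq\mathscr{E}_i\mathscr{A}_i\subseteq\mathscr{B}$ and $\mathscr{A}_i$ is infinite and pairwise coprime, so $e\in D$ by \eqref{skaleD}. Thus $\mathscr{E}\subseteq D$ and $\mathcal{M}_{\mathscr{E}}\subseteq\mathcal{M}_{\mathscr{B}\cup D}$. Conversely, every element of $\mathscr{B}=\bigcup_i\mathscr{E}_i\mathscr{A}_i$ has the form $ea$ with $e\in\mathscr{E}_i\subseteq\mathscr{E}$, so $\mathscr{B}\subseteq\mathcal{M}_{\mathscr{E}}$ and hence $\mathcal{M}_{\mathscr{B}}\subseteq\mathcal{M}_{\mathscr{E}}$. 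The whole statement therefore comes down to the one inclusion $\mathcal{M}_{D}\subseteq\mathcal{M}_{\mathscr{E}}$, i.e.\ that every $d\in D$ is divisible by some element of $\mathscr{E}$.

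This last inclusion is the heart of the matter, and I would argue by contradiction using the arithmetic characterization of minimality in Remark~\ref{uwaga31}~\eqref{Rk:e_min}. Suppose $d\in D$ is divisible by no element of $\mathscr{E}$, and let $\mathscr{A}$ be an infinite pairwise coprime set with $d\mathscr{A}\subseteq\mathscr{B}$. For each $a\in\mathscr{A}$ write $da=e_a a'$ with $e_a\in\mathscr{E}$ (coming from some $\mathscr{E}_{i(a)}$) and $a'\in\mathscr{A}_{i(a)}$. Put $g_a=\gcd(e_a,d)$ and $f_a=e_a/g_a$. Then $e_a\mid da$ together with $\gcd(e_a/g_a,\,d/g_a)=1$ forces $f_a\mid a$, and $f_a>1$ since otherwise $e_a=g_a\mid d$ would place $d$ in $\mathcal{M}_{\mathscr{E}}$. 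Because the $a$ are pairwise coprime and $f_a\mid a$, the $f_a$ are pairwise coprime, hence (being $>1$) pairwise distinct. As $d$ has only finitely many divisors, some fixed $c$ equals $g_a$ for infinitely many $a$; for those $a$ we get $e_a=c f_a\in\mathscr{E}$ with $\{f_a\}$ infinite and pairwise coprime, that is $c\{f_a\}\subseteq\mathscr{E}$. This contradicts the minimality of $\mathscr{E}$ by \eqref{Rk:e_min}, so $d\in\mathcal{M}_{\mathscr{E}}$.

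The main obstacle is precisely this coprimality-transfer step: from the single hypothesis $d\mathscr{A}\subseteq\mathscr{B}$ one must manufacture an infinite pairwise coprime family lying inside a single rescaled copy $c^{-1}\mathscr{E}$. The two mechanisms that make it work are passing from $a$ to the divisor $f_a=e_a/\gcd(e_a,d)$, which preserves pairwise coprimality while staying nontrivial, and the pigeonhole over the finitely many divisors of $d$ to pin down a common scale $c$. Once $\mathcal{M}_{D}\subseteq\mathcal{M}_{\mathscr{E}}$ is in hand the conclusion is pure bookkeeping with sets of multiples. I note that the parallel Theorem~\ref{pr:9_cor} should follow by the identical scheme, with ``Behrend'' and the characterization \eqref{Rk:e} of tautness replacing ``infinite pairwise coprime'' and \eqref{Rk:e_min}.
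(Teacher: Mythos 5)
Your proof is correct, and it takes a genuinely different route from the paper's. The paper deduces the theorem from Proposition~\ref{wersja2}: after checking $\mathscr{E}\subseteq D$ and $(\mathscr{B}\cup\mathscr{E})^{prim}=\mathscr{E}$, it invokes dynamics --- Theorem~\ref{taut_more} converts the chain $\eta^*\leq\eta_{\mathscr{B}\cup\mathscr{E}}\leq\eta$ into the inclusion $X_{\eta_{\mathscr{B}\cup\mathscr{E}}}\subseteq X_\eta$; since $\eta_{\mathscr{B}\cup\mathscr{E}}$ is Toeplitz, its subshift must be the unique minimal subset $X_{\eta^*}$ of $X_\eta$ (essential minimality), and Theorem~\ref{ThmL} upgrades $X_{\eta_{\mathscr{B}\cup\mathscr{E}}}=X_{\eta^*}$ to equality of the taut sets themselves. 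You instead stay entirely on the arithmetic side: starting from the formula $\mathscr{B}^*=(\mathscr{B}\cup D)^{prim}$ you compute $\mathcal{M}_{\mathscr{B}\cup D}=\mathcal{M}_{\mathscr{E}}$ outright, the only nontrivial point being $D\subseteq\mathcal{M}_{\mathscr{E}}$, which your gcd/pigeonhole argument delivers cleanly: $f_a=e_a/\gcd(e_a,d)$ divides $a$, is $>1$ unless $d\in\mathcal{M}_{\mathscr{E}}$, the $f_a$ inherit pairwise coprimality (hence distinctness) from $\mathscr{A}$, and pigeonholing over the finitely many divisors of $d$ produces $c\{f_a\}\subseteq\mathscr{E}$, contradicting \eqref{Rk:e_min}; uniqueness of the primitive set with a given set of multiples then finishes. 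Your argument is more elementary and self-contained --- it needs no subshifts, no essential minimality, and neither Theorem~\ref{taut_more} nor Theorem~\ref{ThmL}, only the cited characterization \eqref{Rk:e_min} --- and it even gives a little more, namely an explicit identification of $\mathcal{M}_D$. What the paper's route buys is brevity given its machinery and the more flexible Proposition~\ref{wersja2}, which applies to any $\mathscr{D}\subseteq D$ with $(\mathscr{B}\cup\mathscr{D})^{prim}$ minimal and never requires computing $D$. One caveat on your closing remark: the transfer to Theorem~\ref{pr:9_cor} is not quite ``identical''. Pigeonhole over the divisors of $d$ does not by itself yield a Behrend subfamily; there you would first note that $\{f_a\}$ is Behrend (since $f_a\mid a$ gives $\mathcal{M}_{\mathscr{A}}\subseteq\mathcal{M}_{\{f_a\}}$) and then use Remark~\ref{uwaga31}~\eqref{Rk:d} to extract a Behrend piece with a fixed scale $c$ before applying \eqref{Rk:e}.
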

We then say that $\mathscr{E}$ is either tautification or minimisation of $\mathscr{B}$, depending whether we are in the context of Theorem~\ref{pr:9_cor} or Theorem~\ref{wersja2_cor1}. Often, $\mathscr{E}$ will be given and we will be constructing a suitable set $\mathscr{B}$. We will then speak of a loosening procedure. It will be useful from the point of view of both our questions.

In Section~\ref{easy} we focus on Question~\ref{q2}. If we allow $\mathscr{B}'$ to be non-Besicovitch, it is relatively easy to find examples with $\overline{d}(\mathcal{M}_{\mathscr{B}'}\setminus \mathcal{M}_\mathscr{B})>0$. Namely, using Theorem~\ref{DE}, we prove the following.
\begin{proposition}\label{bazujenaDE}
Let $\mathscr{E}$ be non-Besicovitch and $\mathscr{E}_i=\mathscr{E}\cap [1,K_i)$ and $\emptyset\neq\mathscr{A}_i\subset [K_i,\infty)$ and $\mathscr{B}=\bigcup_{i\geq 1}\mathscr{E}_i\mathscr{A}_i$. Then, for large enough $K_i$, we have $\overline{d}(\mathcal{M}_\mathscr{E}\setminus \mathcal{M}_\mathscr{B})>0$. If we additionally assume that $\mathscr{E}$ is taut and $\mathscr{A}_i$ are Behrend sets then $\mathscr{B}'=\mathscr{E}$.
\end{proposition}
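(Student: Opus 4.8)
The plan is to treat the two assertions separately, observing that the second one is essentially already available. For the identity $\mathscr{B}'=\mathscr{E}$ under the extra hypotheses I would simply invoke Theorem~\ref{pr:9_cor}: since $\mathscr{E}_i=\mathscr{E}\cap[1,K_i)$ with $K_i\to\infty$ we have $\bigcup_{i\ge1}\mathscr{E}_i=\mathscr{E}$, so with $\mathscr{E}$ taut and each $\mathscr{A}_i$ Behrend the hypotheses of that theorem are met and it yields $\mathscr{B}'=\mathscr{E}$ at once. The real content is therefore the upper-density bound $\overline{d}(\mathcal{M}_\mathscr{E}\setminus\mathcal{M}_\mathscr{B})>0$, which uses neither tautness nor the Behrend property, only that $\mathscr{E}$ is non-Besicovitch and that $\mathscr{A}_i\subseteq[K_i,\infty)$.

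Set $\gamma:=\overline{d}(\mathcal{M}_\mathscr{E})-\underline{d}(\mathcal{M}_\mathscr{E})$, which is strictly positive precisely because $\mathscr{E}$ is non-Besicovitch. The first ingredient is that for \emph{every} truncation level $K$ the tail of multiples has upper density at least $\gamma$: the set $\mathcal{M}_{\mathscr{E}\cap[1,K)}$ is periodic, so its natural density exists and, by Theorem~\ref{DE}, increases to $\underline{d}(\mathcal{M}_\mathscr{E})$ as $K\to\infty$; writing $\mathcal{M}_\mathscr{E}$ as the disjoint union of $\mathcal{M}_{\mathscr{E}\cap[1,K)}$ and $\mathcal{M}_\mathscr{E}\setminus\mathcal{M}_{\mathscr{E}\cap[1,K)}$ and passing to the $\limsup$ of the counting ratios gives
\[
\overline{d}\big(\mathcal{M}_\mathscr{E}\setminus\mathcal{M}_{\mathscr{E}\cap[1,K)}\big)=\overline{d}(\mathcal{M}_\mathscr{E})-d(\mathcal{M}_{\mathscr{E}\cap[1,K)})\ge\gamma .
\]
The second ingredient localises $\mathcal{M}_\mathscr{B}$: if $n<K_{m+1}$ then every block $\mathscr{E}_i\mathscr{A}_i$ with $i>m$ consists of integers $\ge K_i\ge K_{m+1}>n$ and so contributes nothing to $[1,n]$, whence $\mathcal{M}_\mathscr{B}\cap[1,n]=\bigcup_{i\le m}\mathcal{M}_{\mathscr{E}_i\mathscr{A}_i}\cap[1,n]\subseteq\mathcal{M}_{\mathscr{E}\cap[1,K_m)}\cap[1,n]$, using $\mathcal{M}_{\mathscr{E}_i\mathscr{A}_i}\subseteq\mathcal{M}_{\mathscr{E}_i}$ and the nesting of the $\mathscr{E}_i$. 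Consequently, for such $n$,
\[
\big(\mathcal{M}_\mathscr{E}\setminus\mathcal{M}_{\mathscr{E}\cap[1,K_m)}\big)\cap[1,n]\subseteq\big(\mathcal{M}_\mathscr{E}\setminus\mathcal{M}_\mathscr{B}\big)\cap[1,n].
\]

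With these two facts I would build the scales $(K_i)$ recursively so that each window contains a witness. Suppose $K_1<\dots<K_m$, and hence $\mathscr{E}_m=\mathscr{E}\cap[1,K_m)$, have been fixed. Because $\overline{d}(\mathcal{M}_\mathscr{E}\setminus\mathcal{M}_{\mathscr{E}\cap[1,K_m)})\ge\gamma$, there are arbitrarily large $n$ with $\tfrac1n|(\mathcal{M}_\mathscr{E}\setminus\mathcal{M}_{\mathscr{E}\cap[1,K_m)})\cap[1,n]|\ge\gamma/2$; I choose one such $n_m>K_m$ and set $K_{m+1}:=n_m+1$. Then $K_m\le n_m<K_{m+1}$, so the localisation inclusion applies at $n_m$ with this $m$ and gives $\tfrac1{n_m}|(\mathcal{M}_\mathscr{E}\setminus\mathcal{M}_\mathscr{B})\cap[1,n_m]|\ge\gamma/2$. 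Since $n_m\to\infty$ this forces $\overline{d}(\mathcal{M}_\mathscr{E}\setminus\mathcal{M}_\mathscr{B})\ge\gamma/2>0$; this recursive choice is exactly what ``for large enough $K_i$'' means, each $K_{i+1}$ being taken large relative to the data already selected.

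I expect the only delicate point to be the bookkeeping that ties the witness $n_m$ to the correct truncation level, i.e.\ guaranteeing that $n_m$ lies in the window $[K_m,K_{m+1})$ so that no higher block pollutes the count on $[1,n_m]$. Everything else reduces to Theorem~\ref{DE} and the elementary inclusion $\mathcal{M}_\mathscr{B}\subseteq\mathcal{M}_\mathscr{E}$, and the taut/Behrend conclusion is a citation of Theorem~\ref{pr:9_cor}.
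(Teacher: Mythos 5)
Your proof is correct and follows essentially the same route as the paper: a recursive choice of the scales $K_i$ so that each window $[1,K_{i+1})$ carries a positive-proportion witness of $\mathcal{M}_\mathscr{E}\setminus\mathcal{M}_{\mathscr{E}_i}$ (via Theorem~\ref{DE}), combined with the localisation $\mathcal{M}_\mathscr{B}\cap[1,n]\subseteq\mathcal{M}_{\mathscr{E}_i}$ for $n<K_{i+1}$, and a citation of Theorem~\ref{pr:9_cor} for the taut/Behrend part. The only cosmetic difference is that you extract the explicit constant $\gamma=\overline{d}(\mathcal{M}_\mathscr{E})-\underline{d}(\mathcal{M}_\mathscr{E})$ (yielding the bound $\gamma/2$), whereas the paper obtains a constant $\varepsilon_0$ by contradiction from Theorem~\ref{DE} (yielding $\varepsilon_0/2$).
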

Proposition~\ref{bazujenaDE} is a refinement of Proposition~\ref{prop2}. Notice also that in view of~\eqref{onlyobs} if both $\mathscr{B}$ and $\mathscr{B}'$ are Besicovitch then $\overline{d}(\mathcal{M}_{\mathscr{B}'}\setminus \mathcal{M}_{\mathscr{B}})=d(\mathcal{M}_{\mathscr{B}'})-d(\mathcal{M}_\mathscr{B})=0$. However, it turns out that one can still have the following.
\begin{theorem}\label{prop11}
There exists a thin (hence Besicovitch) primitive set $\mathscr{E}=\bigcup_{i\geq 1}\mathscr{E}_i$ such that for $K_i$ large enough and $\mathscr{B}=\bigcup_{i\geq 1}\mathscr{E}_i \mathscr{A}_i$ with $\emptyset\neq\mathscr{A}_i\subseteq [K_i,\infty)$ we have $d^*(\mathcal{M}_\mathscr{E} \setminus \mathcal{M}_\mathscr{B})>0$. If we additionally assume that $\mathscr{A}_i$ are Behrend sets then $\mathscr{B}'=\mathscr{E}$.
\end{theorem}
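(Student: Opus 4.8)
The plan is to treat the two assertions separately, handling the identification $\mathscr{B}'=\mathscr{E}$ first since it is essentially free. As $\mathscr{E}$ is thin and primitive, Remark~\ref{basic_examples}~\eqref{Re:B} shows it is taut; hence, once the $\mathscr{A}_i$ are assumed Behrend, Theorem~\ref{pr:9_cor} applies verbatim and yields $\mathscr{B}'=\mathscr{E}$. Thus all the work goes into the Banach-density statement, and I would reduce it to a purely local estimate. Recall $d^*(A)=\inf_L\sup_{W}|A\cap[W,W+L)|/L$, the infimum being a limit by Fekete subadditivity. Consequently it suffices to produce a single constant $c_0>0$ and a sequence of intervals $I_t$ with $|I_t|\to\infty$ such that $|(\mathcal{M}_\mathscr{E}\setminus\mathcal{M}_\mathscr{B})\cap I_t|\ge c_0|I_t|$ for every admissible choice of the $\mathscr{A}_i$: chopping $I_t$ into sub-blocks of any fixed length $\ell\le|I_t|$ and pigeonholing then gives $\sup_W|(\mathcal{M}_\mathscr{E}\setminus\mathcal{M}_\mathscr{B})\cap[W,W+\ell)|\ge c_0\ell$ for all $\ell$, whence $d^*\ge c_0$.

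Because $\mathcal{M}_\mathscr{B}\subseteq\mathcal{M}_\mathscr{E}$, on each interval the difference count is exactly $|\mathcal{M}_\mathscr{E}\cap I_t|-|\mathcal{M}_\mathscr{B}\cap I_t|$, so the target splits into a lower bound for $\mathcal{M}_\mathscr{E}$ and an upper bound for $\mathcal{M}_\mathscr{B}$ on the same window. The set $\mathscr{E}$ I would build in blocks $\mathscr{E}_i$ whose elements are drawn from the progressions $\mathcal{P}_i$, with scales growing doubly-exponentially so that $\sum_{e\in\mathscr{E}}1/e<\infty$ (thinness) and $\mathscr{E}$ stays primitive, while a cumulative positive mass of blocks remains simultaneously ``active'' on each $I_t$. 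The lower bound $|\mathcal{M}_\mathscr{E}\cap I_t|\ge 2c_0|I_t|$ is then the softer half: for $|I_t|$ large compared with the periods of the blocks visible below scale $|I_t|$, the local density of $\mathcal{M}_\mathscr{E}$ is controlled from below by the cumulative block masses, exactly as in the Davenport--Erd\H{o}s approximation of Theorem~\ref{DE}.

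The crux is the uniform upper bound $|\mathcal{M}_\mathscr{B}\cap I_t|\le c_0|I_t|$, required for every $\emptyset\neq\mathscr{A}_i\subseteq[K_i,\infty)$. Here the role of the thresholds $K_i$ is decisive: every element of $\mathscr{E}_i\mathscr{A}_i$ is $\ge(\min\mathscr{E}_i)K_i$, so by taking the $K_i$ enormous one forces all same- and higher-scale ``killers'' $ea$ (with $a\ge K_i$) past the right endpoint of $I_t$, removing their multiples from the window entirely; only the finitely many lower-scale blocks can then contribute. For those, a union bound gives a local density $\lesssim\sum_{j}\big(\sum_{e\in\mathscr{E}_j}\tfrac1e\big)\log\!\big(|I_t|/((\min\mathscr{E}_j)K_j)\big)$, which the doubly-exponential growth of scales and thresholds makes summable and $<c_0$; the arithmetic-progression structure of the $\mathcal{P}_i$ (the residues $2^i+1\bmod 2^{i+1}$, i.e.\ $v_2(a-1)=i$) is what keeps the killers of distinct scales from conspiring to cover $I_t$, and it is also what I would exploit to place $I_t$ $2$-adically so that the surviving multiples of $\mathscr{E}$ genuinely land in the difference.

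The hardest point, and the one I expect to absorb most of the technical effort, is precisely this uniform control of $\mathcal{M}_\mathscr{B}$ against dense (e.g.\ Behrend) patterns $\mathscr{A}_i$: a careless construction lets a single low block equipped with a Behrend $\mathscr{A}_i$ fill every long window, whereupon the difference collapses in Banach density even though it already has zero upper density. Making the eviction by $K_i$ together with the progression separation quantitatively beat an arbitrary adversarial $\mathscr{A}_i$, while keeping $\mathscr{E}$ thin yet still locally dense enough on the $I_t$ to furnish the lower bound $2c_0$, is the delicate balance the proof must strike; once the two estimates coexist on intervals of unbounded length, the Fekete/pigeonhole step closes the argument.
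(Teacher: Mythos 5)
Your handling of the second assertion is fine and matches the paper: thin plus primitive gives taut by Remark~\ref{basic_examples}~\eqref{Re:B}, and Theorem~\ref{pr:9_cor} then yields $\mathscr{B}'=\mathscr{E}$. The genuine gap is in the Banach-density half, and it sits exactly at the step you call ``the softer half''. Test your scheme against the admissible adversary $\mathscr{A}_i=\N\cap[K_i,\infty)$: then $\mathcal{M}_\mathscr{B}$ contains every integer $em$ with $e\in\mathscr{E}_i$ and $m\geq K_i$, so $\mathcal{M}_\mathscr{E}\setminus\mathcal{M}_\mathscr{B}\subseteq\bigcup_{i}\bigcup_{e\in\mathscr{E}_i}\{em:1\leq m<K_i\}$. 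In particular, on any window lying beyond $\max(\mathscr{E}_j)K_j$ for all ``lower-scale'' blocks $j$, those blocks contribute \emph{nothing} to the difference (their multiples there are entirely inside $\mathcal{M}_\mathscr{B}$), so your constant $c_0$ must be produced by the evicted blocks alone, i.e.\ by integers $em$ with $e$ in a high block and $m<K_{i(e)}$. This contribution cannot be bounded below by ``cumulative block masses'' as in Theorem~\ref{DE}: thinness forces the tail sums $\sum_{e\in\mathscr{E}_{\geq i}}1/e$ to tend to $0$, so that mechanism gives $o(1)$ rather than a uniform constant. The density must instead come from the elements of $\mathscr{E}$ themselves (or boundedly small multiples of them) filling a positive \emph{proportion} of the window, i.e.\ essentially from $d^*(\mathscr{E})>0$. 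Your choice of blocks rules this out: subsets of the prime progressions $\mathcal{P}_i$ occupy proportion $\mathrm{O}(1/\log)$ of any long interval, so such an $\mathscr{E}$ has $d^*(\mathscr{E})=0$, and your two requirements (lower-block masses below $c_0$ for the upper bound, local density at least $2c_0$ for the lower bound) cannot hold simultaneously. The $2$-adic separation of the $\mathcal{P}_i$, which the paper uses only in Section~\ref{se4} to make $\mathscr{B}$ Besicovitch, does no work here.

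The paper's proof avoids estimating $|\mathcal{M}_\mathscr{B}\cap I|$ altogether. It takes $t_i,T_i$ with $(t_i+T_i)!\mid T_{i+1}$ and $\sum_i T_i/t_i<1$, and builds $\mathscr{E}_i\subseteq[t_i,t_i+T_i-1]$ with $|\mathscr{E}_i|\geq\beta T_i$, $\beta>0$: the factorial divisibility makes $\mathcal{M}_{\mathscr{E}_1\cup\cdots\cup\mathscr{E}_i}$ exactly periodic across the next interval, so setting $\mathscr{E}_{i+1}:=[t_{i+1},t_{i+1}+T_{i+1}-1]\setminus\mathcal{M}_{\mathscr{E}_1\cup\cdots\cup\mathscr{E}_i}$ removes only proportion $1-\beta$, while $\sum_i T_i/t_i<1$ keeps $\mathscr{E}$ thin (the intervals are long but extremely far out) and $(t_i+T_i)/t_i<2$ gives primitivity inside each block. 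Then, with $K_i>\max(\mathscr{E}_1\cup\cdots\cup\mathscr{E}_i)$, one checks $\mathscr{E}\cap\mathcal{M}_\mathscr{B}=\emptyset$ by pure divisibility: an element $ea\in\mathscr{E}_i\mathscr{A}_i$ satisfies $ea\geq a\geq K_i$, hence divides nothing in blocks $\leq i$ (those elements are smaller than $K_i$), and divides nothing in blocks $>i$ because already $e$ does not (the later blocks were chosen inside $\mathcal{F}_{\mathscr{E}_1\cup\cdots\cup\mathscr{E}_i}$). Hence $\mathscr{E}\subseteq\mathcal{M}_\mathscr{E}\setminus\mathcal{M}_\mathscr{B}$ and $d^*(\mathcal{M}_\mathscr{E}\setminus\mathcal{M}_\mathscr{B})\geq d^*(\mathscr{E})\geq\beta$, uniformly over all admissible $\mathscr{A}_i$. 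This ``survival of the elements of $\mathscr{E}$ themselves'' is the mechanism your outline lacks, and it is incompatible with drawing the blocks from sets of primes.
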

Theorem~\ref{prop11} is a refinement of Theorem~\ref{prop1}. The proof of Theorem~\ref{prop11} (the construction of the thin set) uses ideas from the proof of~\cite[part V, Theorem 11]{MR687978}, where the original example by Besicovitch of a non-Besicovitch set is given. Roughly speaking, the set $\mathscr{E}=\bigcup_{i\geq 1}\mathscr{E}_i$ is a union of subsets $\mathscr{E}_i$ of intervals $I_i$ that are spread out and the size of $I_i$ is growing rapidly and $|\mathscr{E}_i|/ |I_i| \geq \beta$ for some constant $\beta$.

Section~\ref{se4} is devoted to Question~\ref{q1}. We begin it with the following general result.
\begin{theorem}\label{jjj}
For any $\mathscr{E}=\bigcup_{i\geq 1}\{e_i\}$, there exist $K_i$ such that if $\emptyset\neq\mathscr{A}_i\subseteq \mathcal{P}_i\cap [K_i,\infty)$, where $\mathcal{P}_i$ stands for the set of primes in $2^{i+1}\Z+2^i+1$, then $\mathscr{B}=\bigcup_{i\geq 1}e_i\mathscr{A}_i$ is Besicovitch. If we additionally assume that $\mathscr{E}$ is primitive then $K_i$ can be chosen so that $\mathscr{B}$ is primitive.
\end{theorem}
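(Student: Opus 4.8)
The plan is to verify the Besicovitch property through Erd\H{o}s's criterion (Theorem~\ref{besicovitch}) and to obtain primitivity by an elementary divisibility analysis; the two assertions exploit the two defining features of the progressions $\mathcal{P}_i=\{p\text{ prime}: p\equiv 2^i+1 \bmod 2^{i+1}\}$, namely that $\phi(2^{i+1})=2^i$ grows geometrically (this drives Besicovitch) and that the residues $2^i+1$ are pairwise incompatible modulo higher powers of $2$ (this drives primitivity). I would choose $K_i$ so that $K_i>\max\bigl(\{e_j:j\le i\}\cup\{(2^{i+1})^2\}\bigr)$; every requirement below is a finite constraint, so such $K_i$ exist.

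For the Besicovitch property, write a typical $a\in\mathscr{B}$ with $x^{1-\varepsilon}<a\le x$ as $a=e_ip$ with $p\in\mathscr{A}_i$. Since the inner quantity in Theorem~\ref{besicovitch} is at most the number of multiples of $a$ in $[1,x]$, I would use the crude bound $|[1,x]\cap a\Z\cap\mathcal{F}_{\mathscr{B}\cap[1,a)}|\le \lfloor x/a\rfloor\le x/a$, reducing the whole expression to showing that
\[
\Sigma(x,\varepsilon):=\sum_{\substack{x^{1-\varepsilon}<a\le x\\ a\in\mathscr{B}}}\frac1a=\sum_{i\ge1}\frac1{e_i}\sum_{\substack{p\in\mathscr{A}_i\\ x^{1-\varepsilon}/e_i<p\le x/e_i}}\frac1p
\]
is $\lesssim\varepsilon$ uniformly in $x$. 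For the inner sum I would invoke Brun--Titchmarsh, $\pi(t;2^{i+1},2^i+1)\le \tfrac{2t}{2^i\log(t/2^{i+1})}$; the condition $K_i\ge(2^{i+1})^2$ guarantees $\log(t/2^{i+1})\ge\tfrac12\log t$ throughout the range, so partial summation gives an inner sum $\le \tfrac{C}{2^i}\bigl(\log\log(x/e_i)-\log\log w\bigr)$ plus a lower-order term that contributes $o_x(1)$ after summation, where $w=\max(K_i,x^{1-\varepsilon}/e_i)$ is the effective lower endpoint. The window has multiplicative width $\le x^\varepsilon$, so the difference of iterated logarithms is $\le \varepsilon\log x/\log w$; the extra constraint $K_i\ge e_i$ forces $\log w\ge\tfrac12(1-\varepsilon)\log x$ both in the regime $w=x^{1-\varepsilon}/e_i$ and in the regime $w=K_i$, whence the inner sum is $\le \tfrac{2C\varepsilon}{2^i(1-\varepsilon)}$ uniformly in $x$ and $i$. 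Summing against $1/e_i\le1$ and using $\sum_i 2^{-i}=1$ yields $\Sigma(x,\varepsilon)\le \tfrac{2C\varepsilon}{1-\varepsilon}$, so the $\limsup$ in Theorem~\ref{besicovitch} is $O(\varepsilon)$ and tends to $0$ as $\varepsilon\to0$; thus $\mathscr{B}$ is Besicovitch.

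For primitivity, assume $\mathscr{E}$ is primitive. First, the progressions $\mathcal{P}_i$ are pairwise disjoint: if $p\in\mathcal{P}_i\cap\mathcal{P}_j$ with $i<j$ then $p\equiv 2^j+1\equiv 1\pmod{2^{i+1}}$, contradicting $p\equiv 2^i+1\not\equiv 1\pmod{2^{i+1}}$. Now suppose $e_ip\mid e_jq$ for two distinct elements of $\mathscr{B}$. If $i=j$, cancelling $e_i$ gives $p\mid q$, hence $p=q$, a contradiction. If $i<j$, then $q\ge K_j>e_i$ forces $\gcd(e_i,q)=1$, so $e_i\mid e_jq$ yields $e_i\mid e_j$, contradicting primitivity of $\mathscr{E}$. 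If $i>j$, then $p\ge K_i>e_j$ gives $p\nmid e_j$, while $p\ne q$ by disjointness; since $p$ is prime this contradicts $p\mid e_jq$. Hence no element of $\mathscr{B}$ divides another, i.e.\ $\mathscr{B}$ is primitive.

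The main obstacle is the uniform-in-$x$, summable-in-$i$ control of the windowed prime sum, and in particular the boundary scales $i$ for which $e_i$ is close to $x^{1-\varepsilon}$, where a naive estimate of the iterated-logarithm difference can blow up; it is precisely the choice $K_i\ge e_i$ (together with $K_i\ge(2^{i+1})^2$) that tames these terms, while the factor $2^{-i}=1/\phi(2^{i+1})$ coming from the progression is what makes the sum over scales converge irrespective of $\mathscr{E}$.
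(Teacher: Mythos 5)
Your proof is correct, and its analytic core takes a genuinely different route from the paper's. Both arguments run through Erd\H{o}s's criterion (Theorem~\ref{besicovitch}), the same crude bound $|[1,x]\cap a\Z\cap\mathcal{F}_{\mathscr{B}\cap[1,a)}|\le x/a$, and the same use of $\varphi(2^{i+1})=2^i$ to make the sum over scales converge; the difference lies in how uniformity in the scale $i$ is obtained for the windowed sums $\sum_{p\in\mathscr{A}_i,\;x^{1-\varepsilon}/e_i<p\le x/e_i}1/p$. The paper uses the Mertens--Williams asymptotic \eqref{eq:Prachar}, whose $\textrm{O}(1/\ln x)$ error depends on the modulus $2^{i+1}$, hence on $i$; to recover uniformity it fixes a scale-dependent $\varepsilon'=2^{-i}$, chooses $K_i\ge x_i/e_i$ so that the bound $g_{i,2^{-i}}(x)\le 4^{-(i-1)}$ holds for \emph{all} $x$ (the window being empty below the threshold $x_i$), and then reassembles a general $\varepsilon$ via the covering estimate \eqref{eq:nierownosc5} together with a splitting of the scale sum at $i_0\approx\log_2(1/\varepsilon)$. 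You bypass this bootstrapping entirely by replacing the asymptotic formula with the Brun--Titchmarsh inequality, whose constant is absolute and uniform in the modulus: your conditions $K_i\ge(2^{i+1})^2$ (keeping $\log(t/2^{i+1})\ge\tfrac12\log t$ throughout the range) and $K_i\ge e_i$ (forcing $\log w\ge\tfrac{1-\varepsilon}{2}\log x$ in both regimes --- your two-case analysis here is correct) make partial summation give $g_{i,\varepsilon}(x)\le \tfrac{2C\varepsilon}{2^i(1-\varepsilon)}$ uniformly in $i$ and $x$ in one step. What each approach buys: yours is shorter, needs no per-scale $\varepsilon$, and makes the $K_i$ explicit, whereas the paper's $K_i$ depend on the ineffective thresholds $x_i$ hidden in the error term of \eqref{eq:Prachar}; the price is invoking a sieve theorem, a heavier input than the classical Mertens-type formulas the paper relies on. The one step you compress --- ``a lower-order term that contributes $o_x(1)$'' --- does check out: the boundary term from partial summation is at most $4/(2^i\log(x/e_i))$, which (on the set of $i$ with nonempty window) is dominated by the summable sequence $4/(2^i\log K_i)$ and tends to $0$ for each fixed $i$, so its sum over $i$ vanishes as $x\to\infty$. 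Your primitivity argument is essentially the paper's elementary divisibility check; the paper organizes it without splitting into $i<j$ and $i>j$ and needs only $K_i>e_i$ rather than $K_i>\max\{e_j:j\le i\}$, but this is a cosmetic difference.
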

As a consequence, we obtain the following.
\begin{corollary}\label{n1}
Under the assumptions of Theorem~\ref{jjj}, if we additionally assume that $\mathscr{A}_i$ are Behrend sets (e.g., $\mathscr{A}_i=\mathcal{P}_i\cap [K_i,\infty)]$ and $\mathscr{E}$ is taut then $\mathscr{B}'=\mathscr{E}$.
\end{corollary}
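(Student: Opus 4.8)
The plan is to observe that the set $\mathscr{B}=\bigcup_{i\geq 1}e_i\mathscr{A}_i$ produced by Theorem~\ref{jjj} is precisely of the shape treated in Theorem~\ref{pr:9_cor}: writing $\mathscr{E}_i=\{e_i\}$ (singleton scales), we have $\mathscr{B}=\bigcup_{i\geq 1}\mathscr{E}_i\mathscr{A}_i$ with $\mathscr{E}=\bigcup_{i\geq 1}\mathscr{E}_i=\bigcup_{i\geq 1}\{e_i\}$. Thus, once we verify that the hypotheses of Theorem~\ref{pr:9_cor} are met, the identity $\mathscr{B}'=\mathscr{E}$ follows at once. Concretely, the corollary assumes that $\mathscr{E}$ is taut and that each $\mathscr{A}_i$ is a Behrend set, which are exactly the two standing assumptions of Theorem~\ref{pr:9_cor}, so I would simply invoke that theorem.

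Next I would check that the example $\mathscr{A}_i=\mathcal{P}_i\cap[K_i,\infty)$ really is Behrend, so that the parenthetical remark in the statement is justified. Here $\mathcal{P}_i$ consists of the primes in the arithmetic progression $2^{i+1}\Z+2^i+1$; since $2^i+1$ is odd and hence coprime to $2^{i+1}$, Dirichlet's theorem guarantees that the reciprocals of these primes sum to $+\infty$. Discarding the finitely many primes below $K_i$ does not affect this divergence, so $\mathscr{A}_i$ is an infinite set of primes with nonsummable series of reciprocals, whence $\mathscr{A}_i$ is Behrend by Remark~\ref{basic_examples}~\eqref{Behrend_coprime}.

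There is essentially no obstacle to overcome here: the content of the corollary is entirely packaged in the two results it combines. Theorem~\ref{jjj} supplies thresholds $K_i$ making $\mathscr{B}$ Besicovitch (and primitive, when $\mathscr{E}$ is), while Theorem~\ref{pr:9_cor}---applied with singleton scales---yields $\mathscr{B}'=\mathscr{E}$. The one point worth flagging is that the conclusion $\mathscr{B}'=\mathscr{E}$ does not depend on the particular choice of $K_i$: it holds for any admissible choice of Behrend patterns $\mathscr{A}_i\subseteq\mathcal{P}_i\cap[K_i,\infty)$. Hence there is no tension between the $K_i$ selected to force Besicovitch-ness and primitivity in Theorem~\ref{jjj} and the hypotheses needed for Theorem~\ref{pr:9_cor}, and the corollary is immediate.
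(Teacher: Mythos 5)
Your proposal is correct and follows exactly the paper's route: the paper proves Corollary~\ref{n1} by declaring it a straightforward consequence of Theorem~\ref{pr:9_cor}, which is precisely your application with singleton scales $\mathscr{E}_i=\{e_i\}$. Your additional verification that $\mathcal{P}_i\cap[K_i,\infty)$ is Behrend (via Dirichlet and Remark~\ref{basic_examples}~\eqref{Behrend_coprime}) and that the choice of $K_i$ causes no conflict is sound supplementary detail the paper leaves implicit.
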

\begin{corollary}\label{n2}
Under the assumptions of Theorem~\ref{jjj}, if we additionally assume that each $\mathscr{A}_i$ is infinite pairwise coprime thin and $\mathscr{E}$ is minimal then $\mathscr{B}^*=\mathscr{E}$. Moreover, if additionally $\bigcup_{i\geq 1}\mathscr{A}_i$ is thin then $\mathscr{B}$ is taut (such assumptions on $\mathscr{A}_i's$ are satisfied, e.g., by sufficiently scarce subsets of $\mathcal{P}_i\cap [K_i,\infty)$).
\end{corollary}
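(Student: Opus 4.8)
The plan is to treat the two assertions separately, reducing each to results already available. For the first assertion, $\mathscr{B}^*=\mathscr{E}$, I would simply verify that the hypotheses of Theorem~\ref{wersja2_cor1} hold. Writing $\mathscr{E}_i=\{e_i\}$, the set $\mathscr{B}=\bigcup_{i\ge1}e_i\mathscr{A}_i$ is exactly of the form $\bigcup_{i\ge1}\mathscr{E}_i\mathscr{A}_i$, with $\mathscr{E}=\bigcup_{i\ge1}\mathscr{E}_i$ minimal by hypothesis. Since each $\mathscr{A}_i\subseteq\mathcal{P}_i$ consists of (distinct) primes and is infinite, it is infinite and pairwise coprime, so Theorem~\ref{wersja2_cor1} applies verbatim and yields $\mathscr{B}^*=\mathscr{E}$.

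For the second assertion I would show that $\mathscr{B}$ is both primitive and thin, so that tautness follows from Remark~\ref{basic_examples}~\eqref{Re:B}. Primitivity comes for free: $\mathscr{E}$ is minimal, hence primitive, so by the final sentence of Theorem~\ref{jjj} the constants $K_i$ may be chosen so that $\mathscr{B}$ is primitive (this is already part of ``the assumptions of Theorem~\ref{jjj}''). For thinness, the key observation is that the arithmetic progressions defining the $\mathcal{P}_i$ are pairwise disjoint: a prime $p$ lies in $\mathcal{P}_i$ precisely when its binary expansion has bit $0$ equal to $1$, bits $1,\dots,i-1$ equal to $0$, and bit $i$ equal to $1$, and for distinct $i<j$ these requirements clash (membership in $\mathcal{P}_j$ forces bit $i$ to vanish while membership in $\mathcal{P}_i$ forces it to be $1$). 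Hence the sets $\mathscr{A}_i\subseteq\mathcal{P}_i$ are pairwise disjoint.

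Given this, thinness is a one-line estimate. Mapping the index set $\{(i,a):i\ge1,\ a\in\mathscr{A}_i\}$ onto $\mathscr{B}$ by $(i,a)\mapsto e_ia$ and using $e_i\ge1$,
\[
\sum_{b\in\mathscr{B}}\frac1b\le\sum_{i\ge1}\sum_{a\in\mathscr{A}_i}\frac{1}{e_ia}\le\sum_{i\ge1}\sum_{a\in\mathscr{A}_i}\frac1a=\sum_{a\in\bigcup_{i\ge1}\mathscr{A}_i}\frac1a<\infty,
\]
where the middle equality uses the disjointness just established and the last inequality is the added hypothesis that $\bigcup_{i\ge1}\mathscr{A}_i$ is thin. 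Thus $\mathscr{B}$ is thin and primitive, hence taut. Note that I deliberately bound $1/e_i\le1$ rather than trying to control $\sum_i 1/e_i$, since $\mathscr{E}$ is only assumed minimal and need not itself be thin; all the convergence is supplied by thinness of $\bigcup_{i\ge1}\mathscr{A}_i$.

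For the parenthetical existence claim I would record that each $\mathcal{P}_i$ is infinite by Dirichlet's theorem (as $\gcd(2^i+1,2^{i+1})=1$), so one may choose an infinite $\mathscr{A}_i\subseteq\mathcal{P}_i\cap[K_i,\infty)$ sparse enough that $\sum_{a\in\mathscr{A}_i}1/a\le 2^{-i}$; then $\bigcup_{i\ge1}\mathscr{A}_i$ is thin while each $\mathscr{A}_i$ stays infinite and pairwise coprime, so all hypotheses are simultaneously realizable. I do not expect a serious obstacle: the entire content lies in invoking Theorem~\ref{wersja2_cor1} and the ``primitive thin $\Rightarrow$ taut'' principle, and the only point genuinely requiring care is the disjointness of the $\mathcal{P}_i$, which is what justifies the middle equality in the displayed estimate.
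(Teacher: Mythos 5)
Your proof is correct and follows essentially the same route as the paper: Theorem~\ref{wersja2_cor1} gives $\mathscr{B}^*=\mathscr{E}$, and tautness follows from thinness of $\mathscr{B}$ together with primitivity, i.e.\ Remark~\ref{basic_examples}~\eqref{Re:B}. The only differences are cosmetic: you make explicit the pairwise disjointness of the $\mathcal{P}_i$ (which the paper's parenthetical ``the corresponding sum of reciprocals is even smaller'' glosses over) and you secure primitivity from the last clause of Theorem~\ref{jjj}, whereas the paper simply replaces $\mathscr{B}$ by $\mathscr{B}^{prim}$.
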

Corollary~\ref{n1} is a refinement of Theorem~\ref{taut_bes}, while Corollary~\ref{n2} is a refinement of Theorem~\ref{toeplitz_bes}.

%

In Section~\ref{triples}, we provide examples showing that all triples $ijk\in\{0,1\}^3$ with $ij\neq 01$ are realizable.

\begin{remark}
In~\cite{MR3989121} implication~\eqref{onlyobs} is wrongly quoted as ``if $\mathscr{B}$ is Besicovitch then $\mathscr{B}'$ is Besicovitch'', see Theorem 2.25 therein. It is used there in the proof of Theorem 2.18. Fortunately, the error is easy to correct: in the proof of Theorem 2.18, it suffices to notice that if $\mathscr{B}$ is Besicovitch then for any $u\in \mathbb{N}$ and ${r}\in\mathbb{Z}$, we have
\[
\delta\left( \frac{\mathcal{F}_{\mathscr{B}'}-r}{u}\right)=\delta\left( \frac{\mathcal{F}_{\mathscr{B}}-r}{u}\right)=d\left( \frac{\mathcal{F}_\mathscr{B}-r}{u}\right)
\]
(cf.\ Lemma 2.22 in~\cite{MR3989121}), where, for $A\subseteq\Z$,
\[\frac{A-r}{u}=\left\{\frac{a-r}{u}  : a\in A \text{ such that } u\divides a-r\right\}.\]
\end{remark}

%

\section{Loosening procedure}\label{des}
Let $\mathscr{B}\subseteq \N$. Set
\[
C=\{c\in\N : c\mathscr{A} \subseteq \mathscr{B} \text{ for some Behrend set }\mathscr{A}\}
\]
and recall that $\mathscr{B}'=(\mathscr{B}\cup C)^{prim}$.
\begin{proposition}\label{pr:9}
Suppose that $\mathscr{C}\subseteq C$ and $(\mathscr{B} \cup \mathscr{C})^{prim}$ is taut. Then $\mathscr{B}'=(\mathscr{B} \cup \mathscr{C})^{prim}$.
\end{proposition}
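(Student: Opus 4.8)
The plan is to translate the claim into an equality of sets of multiples and then settle it by a purely arithmetic argument; no dynamics or Mirsky measures are needed. Write $\mathscr{B}'':=(\mathscr{B}\cup\mathscr{C})^{prim}$. Since a primitive set is uniquely determined by its set of multiples, and since $\mathscr{B}'=(\mathscr{B}\cup C)^{prim}$, the desired identity $\mathscr{B}'=\mathscr{B}''$ is equivalent to $\mathcal{M}_{\mathscr{B}\cup C}=\mathcal{M}_{\mathscr{B}\cup\mathscr{C}}$. The inclusion $\mathcal{M}_{\mathscr{B}\cup\mathscr{C}}\subseteq\mathcal{M}_{\mathscr{B}\cup C}$ is immediate from $\mathscr{C}\subseteq C$, so the whole content lies in the reverse inclusion; and because $\mathcal{M}_{\mathscr{B}}\subseteq\mathcal{M}_{\mathscr{B}\cup\mathscr{C}}$ trivially, it suffices to show that every $c\in C$ belongs to $\mathcal{M}_{\mathscr{B}''}$ (once $c\in\mathcal{M}_{\mathscr{B}''}$, a divisor of $c$ lying in $\mathscr{B}''$ divides every multiple of $c$, so all of $c\Z$ lies in $\mathcal{M}_{\mathscr{B}''}=\mathcal{M}_{\mathscr{B}\cup\mathscr{C}}$).

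So I would fix $c\in C$ together with a Behrend set $\mathscr{A}$ witnessing $c\mathscr{A}\subseteq\mathscr{B}$, and argue by contradiction, assuming $c\notin\mathcal{M}_{\mathscr{B}''}$, i.e.\ no element of $\mathscr{B}''$ divides $c$. For each $a\in\mathscr{A}$ we have $ca\in\mathscr{B}\subseteq\mathcal{M}_{\mathscr{B}''}$, hence there is $b_a\in\mathscr{B}''$ with $b_a\divides ca$. Setting $g_a:=\gcd(b_a,c)$ and $f_a:=b_a/g_a$, we get $b_a=g_a f_a$ with $f_a\divides a$. The crucial local observation is that $f_a>1$: otherwise $b_a=g_a\divides c$, contradicting $c\notin\mathcal{M}_{\mathscr{B}''}$.

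Next I would run a pigeonhole over the finitely many divisors of $c$. Since $\mathscr{A}=\bigcup_{g\divides c}\{a\in\mathscr{A}:g_a=g\}$ is a finite union and $\mathscr{A}$ is Behrend, Remark~\ref{uwaga31}~\eqref{Rk:d} (applied repeatedly) produces a fixed $g\divides c$ for which $\mathscr{A}_g:=\{a\in\mathscr{A}:g_a=g\}$ is Behrend. For $a\in\mathscr{A}_g$ we have $f_a\divides a$, so $a\Z\subseteq f_a\Z$ and therefore $\mathcal{M}_{\mathscr{A}_g}\subseteq\mathcal{M}_{\{f_a:a\in\mathscr{A}_g\}}$, whence $\delta(\mathcal{M}_{\{f_a:a\in\mathscr{A}_g\}})\ge\delta(\mathcal{M}_{\mathscr{A}_g})=1$. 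Combined with $f_a>1$, this shows that $\mathscr{A}':=\{f_a:a\in\mathscr{A}_g\}$ is itself a Behrend set. But $g\mathscr{A}'=\{b_a:a\in\mathscr{A}_g\}\subseteq\mathscr{B}''$, so $\mathscr{B}''$ contains $g\mathscr{A}'$ with $\mathscr{A}'$ Behrend, contradicting the tautness of $\mathscr{B}''$ via Remark~\ref{uwaga31}~\eqref{Rk:e}. This contradiction yields $c\in\mathcal{M}_{\mathscr{B}''}$, giving $\mathcal{M}_C\subseteq\mathcal{M}_{\mathscr{B}''}$ and hence $\mathcal{M}_{\mathscr{B}\cup C}=\mathcal{M}_{\mathscr{B}\cup\mathscr{C}}$, as required.

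I expect the main obstacle to be exactly the construction of a genuine Behrend set from the divisors $f_a$. Two ingredients make it work: (i) the finiteness of the divisor set of $c$, which lets one isolate a single $g$ with $\mathscr{A}_g$ still Behrend; and (ii) the monotonicity $\delta(\mathcal{M}_{\{f_a\}})\ge\delta(\mathcal{M}_{\mathscr{A}_g})$ coming from $f_a\divides a$, which transports the Behrend property from $\mathscr{A}_g$ to $\{f_a\}$, while $f_a>1$ keeps us inside $\N\setminus\{1\}$ so that the hypothesis of Remark~\ref{uwaga31}~\eqref{Rk:e} is genuinely met. This route is entirely arithmetic; an alternative, more dynamical proof could instead check that $\mathscr{B}''$ and $\mathscr{B}$ realise the same Mirsky measure and invoke the uniqueness in Theorem~\ref{taut}, but the direct inclusion above seems shorter and sidesteps quasi-genericity altogether.
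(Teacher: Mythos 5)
Your proof is correct, and it takes a genuinely different route from the paper's. The paper argues dynamically: from the chain $\eta^*\leq\eta'=\eta_{\mathscr{B}\cup C}\leq\eta_{(\mathscr{B}\cup\mathscr{C})^{prim}}\leq\eta$ it deduces via Theorem~\ref{taut_more} that $X_{\eta_{(\mathscr{B}\cup\mathscr{C})^{prim}}}\subseteq X_\eta$, then uses that $\mathscr{B}'$ is the largest element of $\text{Taut}(\mathscr{B})$ to get $X_{\eta_{(\mathscr{B}\cup\mathscr{C})^{prim}}}\subseteq X_{\eta'}$, applies Theorem~\ref{taut_more} again to conclude $\eta_{(\mathscr{B}\cup\mathscr{C})^{prim}}=\eta'$, and finishes with Theorem~\ref{ThmL}. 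You bypass the subshift machinery entirely and prove the stronger arithmetic identity $\mathcal{M}_{\mathscr{B}\cup C}=\mathcal{M}_{\mathscr{B}\cup\mathscr{C}}$, which suffices because a primitive set is determined by its set of multiples. Your key mechanism checks out: from $b_a\divides ca$ with $g_a=\gcd(b_a,c)$ and $f_a=b_a/g_a$ one indeed gets $f_a\divides a$ (since $\gcd(f_a,c/g_a)=1$) and $f_a>1$ (this is exactly the negation of $c\in\mathcal{M}_{(\mathscr{B}\cup\mathscr{C})^{prim}}$); the pigeonhole over the finitely many divisors of $c$ via Remark~\ref{uwaga31}~\eqref{Rk:d}, together with monotonicity of the logarithmic density (which exists for all sets of multiples by Theorem~\ref{DE}), legitimately transports the Behrend property to $\mathscr{A}'=\{f_a : a\in\mathscr{A}_g\}$, and then $g\mathscr{A}'\subseteq(\mathscr{B}\cup\mathscr{C})^{prim}$ contradicts tautness through Remark~\ref{uwaga31}~\eqref{Rk:e}. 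As for what each approach buys: yours is self-contained at the level of elementary number theory---it needs only the formula \eqref{skaleC} and the Behrend facts from \cite{MR1414678}, with no subshifts, Mirsky measures, or the results of \cite{DKPS}---and it yields the explicit conclusion that every $c\in C$ has a divisor in $(\mathscr{B}\cup\mathscr{C})^{prim}$; it also adapts mutatis mutandis to Proposition~\ref{wersja2} (replace Remark~\ref{uwaga31}~\eqref{Rk:e} by \eqref{Rk:e_min}, and note that divisors $f_a>1$ of pairwise coprime numbers are pairwise coprime and pairwise distinct, so infinitude replaces the density transport). The paper's proof, in turn, is shorter once Theorems~\ref{taut_more} and~\ref{ThmL} are available, and it displays the order-theoretic picture of $\mathscr{B}'$ as the top of $\text{Taut}(\mathscr{B})$, which the paper exploits elsewhere.
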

\begin{proof}
We have
\begin{equation}\label{totaut}
\eta^*\leq\eta' =\eta_{\mathscr{B}\cup C} \leq \eta_{(\mathscr{B}\cup\mathscr{C})^{prim}}\leq \eta.
\end{equation}
It follows by Theorem~\ref{taut_more} that $X_{\eta_{(\mathscr{B}\cup\mathscr{C})}^{prim}}\subseteq X_\eta$. However, since $\mathscr{B}'$ is the largest element in $\text{Taut}(\mathscr{B})$, we have $(\mathscr{B}\cup \mathscr{C})^{prim}\prec \mathscr{B}'$, i.e.\ $X_{\eta_{(\mathscr{B}\cup\mathscr{C})^{prim}}}\subseteq X_{\eta'}$. Applying again Theorem~\ref{taut_more}, we obtain $\eta_{(\mathscr{B}\cup \mathscr{C})^{prim}}\leq \eta'$. Thus, $\eta_{(\mathscr{B}\cup \mathscr{C})^{prim}}= \eta'$. It remains to use Theorem~\ref{ThmL} to complete the proof.
\end{proof}

\begin{proof}[Proof of Theorem~\ref{pr:9_cor}]
Take $\mathscr{B}=\bigcup_{i\geq 1}\mathscr{E}_i\mathscr{A}_i$, where $\mathscr{E}=\bigcup_{i\geq 1}\mathscr{E}_i$ is taut and $\mathscr{A}_i$ are Behrend sets. It suffices to notice that $\mathscr{E}\subseteq C$ and $(\mathscr{B}\cup\mathscr{E})^{prim}=\mathscr{E}^{prim}=\mathscr{E}$ (the last equality follows directly by the fact that $\mathscr{E}$ is taut and hence primitive). To obtain $\mathscr{B}'=\mathscr{E}$ it suffices to apply Proposition~\ref{pr:9}.
\end{proof}

Recall that
\[
D=\{d\in\N : d\mathscr{A} \subseteq \mathscr{B} \text{ for some infinite pairwise coprime set }\mathscr{A}\}
\]
and that $\mathscr{B}^*=(\mathscr{B}\cup D)^{prim}$.
\begin{proposition}\label{wersja2}
Suppose that $\mathscr{D}\subseteq D$ and $(\mathscr{B}\cup \mathscr{D})^{prim}$ is minimal. Then $(\mathscr{B}\cup\mathscr{D})^{prim}=\mathscr{B}^*$.
\end{proposition}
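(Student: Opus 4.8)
The plan is to mirror the proof of Proposition~\ref{pr:9} exactly, replacing ``taut'' by ``minimal'' and ``Behrend'' by ``infinite pairwise coprime'' throughout, and substituting the set $C$ by $D$. The structural parallel is complete: just as $\mathscr{B}'$ is characterized as the largest element of $\mathrm{Taut}(\mathscr{B})$ with respect to $\prec$, the set $\mathscr{B}^*$ is characterized as the \emph{smallest} element of $\mathrm{Taut}(\mathscr{B})$ with respect to $\prec$ (as recalled in the discussion following~\eqref{j3}). Every minimal set is taut by Remark~\ref{basic_examples}\eqref{minimal_taut}, so $(\mathscr{B}\cup\mathscr{D})^{prim}$ being minimal places it inside $\mathrm{Taut}(\mathscr{B})$ once we verify the subshift inclusion.

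First I would record the chain of inequalities for the indicator functions. Since $\mathscr{D}\subseteq D$, we have $\mathscr{B}\cup\mathscr{D}\subseteq \mathscr{B}\cup D$, and passing to primitive versions (which does not change sets of multiples) gives
\[
\eta^*=\eta_{(\mathscr{B}\cup D)^{prim}}\leq \eta_{(\mathscr{B}\cup\mathscr{D})^{prim}}\leq\eta.
\]
Here the outer inequalities come from $\mathcal{M}_\mathscr{B}\subseteq \mathcal{M}_{(\mathscr{B}\cup\mathscr{D})^{prim}}\subseteq\mathcal{M}_{(\mathscr{B}\cup D)^{prim}}=\mathcal{M}_{\mathscr{B}^*}$, which translate into the reverse inclusions for the $\mathscr{B}$-free sets and hence the stated order on the characteristic functions $\eta=\mathbf{1}_{\mathcal{F}_\bullet}$.

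Next I would apply Theorem~\ref{taut_more}. Set $\mathscr{C}:=(\mathscr{B}\cup\mathscr{D})^{prim}$, which is taut by hypothesis. The chain $\eta^*\leq\eta_\mathscr{C}\leq\eta$ together with Theorem~\ref{taut_more} yields $X_{\eta_\mathscr{C}}\subseteq X_\eta$, so $\mathscr{C}\in\mathrm{Taut}(\mathscr{B})$. Since $\mathscr{B}^*$ is the smallest element of $\mathrm{Taut}(\mathscr{B})$ with respect to $\prec$, we get $\mathscr{B}^*\prec\mathscr{C}$, that is $X_{\eta^*}\subseteq X_{\eta_\mathscr{C}}$. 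Applying Theorem~\ref{taut_more} in the other direction (with the taut set $\mathscr{B}^*$ in the role of $\mathscr{C}$, using $\eta^*\leq\eta^*\leq\eta$) turns this subshift inclusion into $\eta_\mathscr{C}\leq\eta^*$. Combined with $\eta^*\leq\eta_\mathscr{C}$ from the displayed chain, this gives $\eta_\mathscr{C}=\eta^*$, and a final appeal to Theorem~\ref{ThmL} (both $\mathscr{C}$ and $\mathscr{B}^*$ being taut, in fact minimal) upgrades the equality of indicator functions to the equality of sets $\mathscr{C}=\mathscr{B}^*$.

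The main subtlety I anticipate is making the ``smallest element'' characterization of $\mathscr{B}^*$ and the resulting extraction of $\eta_\mathscr{C}\leq\eta^*$ fully rigorous via Theorem~\ref{taut_more}, since that theorem as stated converts a \emph{sandwiched} inequality $\eta^*\leq\eta_\mathscr{C}\leq\eta$ into a subshift inclusion $X_{\eta_\mathscr{C}}\subseteq X_\eta$, and I must be careful that the reverse passage (from $X_{\eta^*}\subseteq X_{\eta_\mathscr{C}}$ back to an inequality of indicator functions) is legitimately an instance of the same theorem applied with the base set still $\mathscr{B}$ and the taut witness being $\mathscr{B}^*$ itself. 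Everything else is the formal transcription of Proposition~\ref{pr:9}, with the only genuine input being that a minimal set is taut so that the partial order $\prec$ and its extremal elements apply verbatim.
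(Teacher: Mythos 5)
Your argument is fine up to and including the extraction of $X_{\eta^*}\subseteq X_{\eta_{\mathscr{C}}}$, where $\mathscr{C}=(\mathscr{B}\cup\mathscr{D})^{prim}$: the chain of inequalities, the inclusion $X_{\eta_\mathscr{C}}\subseteq X_\eta$ via Theorem~\ref{taut_more}, and the membership $\mathscr{C}\in\text{Taut}(\mathscr{B})$ together with the smallest-element property of $\mathscr{B}^*$ are all correct. The step you yourself flag as the main subtlety, however, is genuinely broken. Theorem~\ref{taut_more} applied ``with the base set still $\mathscr{B}$ and the taut witness $\mathscr{B}^*$'' is the equivalence between $\eta^*\leq\eta^*\leq\eta$ and $X_{\eta^*}\subseteq X_\eta$; both sides are trivially true and neither mentions $\mathscr{C}$, so this instance cannot convert $X_{\eta^*}\subseteq X_{\eta_\mathscr{C}}$ into anything. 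To apply the theorem to that inclusion you must take the base set to be $\mathscr{C}$ and the witness to be $\mathscr{B}^*$, and then its conclusion reads $\eta_{\mathscr{C}^*}\leq\eta^*\leq\eta_\mathscr{C}$: the right-hand half is what you already knew, and the new half concerns $\mathscr{C}^*$, not $\mathscr{C}$. This is where the claimed ``exact mirroring'' of Proposition~\ref{pr:9} fails: there, the needed inequality $\eta_{(\mathscr{B}\cup\mathscr{C})^{prim}}\leq\eta'$ is precisely the ``witness $\leq$ base'' half that the theorem delivers from $X_{\eta_{(\mathscr{B}\cup\mathscr{C})^{prim}}}\subseteq X_{\eta'}$, whereas here you need the \emph{reverse} of what the theorem gives. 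Indeed, tautness of $\mathscr{C}$ plus the two subshift inclusions cannot possibly suffice: the taut set $\mathscr{C}=\mathscr{B}'$ always satisfies $X_{\eta^*}\subseteq X_{\eta'}\subseteq X_\eta$ by~\eqref{j3}, so your step would yield $\eta'\leq\eta^*$, i.e.\ $\mathscr{B}'=\mathscr{B}^*$ for every $\mathscr{B}$ --- false, e.g., for Besicovitch's example, where $\mathscr{B}^*=\{1\}\neq\mathscr{B}'$.

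What is missing is a use of minimality of $\mathscr{C}$ beyond the observation that it is taut. Two repairs are available. Staying inside your framework: since $\mathscr{C}$ is minimal, by Remark~\ref{uwaga31}~\eqref{Rk:e_min} no set $c\mathscr{A}$ with $\mathscr{A}$ infinite pairwise coprime is contained in $\mathscr{C}$, so the set $D$ computed for $\mathscr{C}$ is empty and $\mathscr{C}^*=\mathscr{C}^{prim}=\mathscr{C}$; then the base-$\mathscr{C}$ application of Theorem~\ref{taut_more} gives $\eta_\mathscr{C}=\eta_{\mathscr{C}^*}\leq\eta^*\leq\eta_\mathscr{C}$, hence $\eta_\mathscr{C}=\eta^*$, and Theorem~\ref{ThmL} finishes. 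The paper's own proof is shorter and bypasses the smallest-element characterization entirely: from $X_{\eta_\mathscr{C}}\subseteq X_\eta$, since $\eta_\mathscr{C}$ is a Toeplitz sequence the subshift $X_{\eta_\mathscr{C}}$ is a \emph{minimal} dynamical system, and since $X_\eta$ is essentially minimal with unique minimal subset $X_{\eta^*}$, it follows that $X_{\eta_\mathscr{C}}=X_{\eta^*}$; Theorem~\ref{ThmL} (both sets being taut) then gives $\mathscr{C}=\mathscr{B}^*$. Equivalently, in your setup, minimality of $X_{\eta_\mathscr{C}}$ forces the nonempty closed invariant subset $X_{\eta^*}\subseteq X_{\eta_\mathscr{C}}$ to be all of $X_{\eta_\mathscr{C}}$ --- but this minimality is exactly the ingredient your write-up never invokes.
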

\begin{proof}
We have
\[
\eta^*\leq \eta_{\mathscr{B}\cup D}\leq \eta_{\mathscr{B}\cup\mathscr{D}}\leq \eta.
\]
By Theorem~\ref{taut_more}, the above is equivalent to $X_{\eta_{\mathscr{B}\cup\mathscr{D}}}\subseteq X_\eta$. Since $\eta_{\mathscr{B}\cup\mathscr{D}}$ is Toeplitz, it follows that $X_{\eta_{\mathscr{B}\cup\mathscr{D}}}$ must be the unique minimal subset of $X_\eta$. That is, $X_{\eta_{\mathscr{B}\cup\mathscr{D}}}=X_{\eta_{\mathscr{B}^*}}$. However, both $(\mathscr{B}\cup\mathscr{D})^{prim}$ and $\mathscr{B}^*$ are taut and by Theorem~\ref{ThmL} it follows that $(\mathscr{B}\cup\mathscr{D})^{prim}=\mathscr{B}^*$.
\end{proof}
\begin{proof}[Proof of Theorem~\ref{wersja2_cor1}]
Let $\mathscr{B}=\bigcup_{i\geq 1}\mathscr{E}_i\mathscr{A}_i$, where $\mathscr{E}=\bigcup_{i\geq 1}\mathscr{E}_i$ is minimal and $\mathscr{A}_i$ are infinite pairwise coprime sets. It suffices to notice that $\mathscr{E}\subseteq D$ and $(\mathscr{B}\cup\mathscr{E})^{prim}=\mathscr{E}^{prim}=\mathscr{E}$ (again, the last equality follows directly by the fact that $\mathscr{E}$ is minimal and hence taut, so it is primitive).
The assertion follows by Proposition~\ref{wersja2}.
\end{proof}

\section{Density of $\mathcal{M}_{\mathscr{B}'}\setminus\mathcal{M}_{\mathscr{B}}$}\label{easy}
%
\begin{proof}[Proof of Proposition~\ref{bazujenaDE}]
If $\mathscr{E}$ is not Besicovitch,
\begin{equation}\label{non-Bes}
\text{there exists }\vep_0>0 \text{ such that for all finite subsets }S\subseteq\mathscr{E},\text{ we have } \overline{d}(\cM_\mathscr{E}\setminus\cM_S)>\vep_0
\end{equation}
(this follows from the Davenport-Erd\H{o}s theorem, ad absurdum). Let $K_1:=2$ and $\mathscr{E}_1:=\mathscr{E}\cap [1,K_1)$. Suppose that we have defined $K_1,\dots, K_i\in \N$ and $\mathscr{E}_1\subseteq \dots\subseteq \mathscr{E}_i$. Using~\eqref{non-Bes} for $S=\mathscr{E}_i$, we can find $K_{i+1}\in\N$ such that
\begin{equation*}
|[1,K_{i+1})\cap\cM_\mathscr{E}\setminus\cM_{\mathscr{E}_i}|\geq K_{i+1}\frac{\vep_0}{2}.
\end{equation*}
Let $\mathscr{E}_{i+1}:=\mathscr{E}\cap [1,K_{i+1})$. Finally, let
\[
\mathscr{B}:=\bigcup_{i\geq 1}\mathscr{B}_i, \text{ where }\mathscr{B}_i=\mathscr{E}_i\mathscr{A}_i,
\]
where $\emptyset \neq \mathscr{A}_i\subset [K_i,\infty)$. It follows that
\begin{align*}
&|[1,K_{i+1})\cap \mathcal{M}_\mathscr{E}\setminus \mathcal{M}_\mathscr{B}|=|[1,K_{i+1})\cap \mathcal{M}_{\mathscr{E}}\setminus \mathcal{M}_{\mathscr{B}_1\cup\dots\cup \mathscr{B}_i}|
\\
&\geq |[1,K_{i+1})\cap \mathcal{M}_{\mathscr{E}}\setminus \mathcal{M}_{\mathscr{E}_i}|\geq K_{i+1} \frac{\vep_0}{2}.
\end{align*}
Thus, $\overline{d}(\mathcal{M}_\mathscr{E}\setminus \mathcal{M}_\mathscr{B})\geq \frac{\vep_0}{2}$. If $\mathscr{E}$ is taut and $\mathscr{A}_i$ are Behrend sets, it remains to use Theorem~\ref{pr:9_cor} to conclude that $\mathscr{B}'=\mathscr{E}$.
\end{proof}

\begin{proof}[Proof of Theorem~\ref{prop11}]
Let $(T_i)_{i\ge 1}$ and $(t_i)_{i\ge 1}$ be increasing sequences of natural numbers satisfying

\begin{equation}\label{eq:tbd11}
((t_i+T_i)!)\divides T_{i+1} \text{ for every }i\geq 1
\end{equation}
and
\begin{equation}\label{eq:tbd21}
\sum_{i=1}^{\infty}\frac{T_i}{t_i}<1.
\end{equation}
Let $J$ denote the union of all sets $[t_i,t_i+T_i-1]$ over all $i\in\N$. Clearly, these intervals are pairwise disjoint. Therefore,
\begin{equation}\label{eq:tbd22}
\overline{d}(\cM_{J})\le \sum_{j\in J}\frac{1}{j}\le \sum_{i= 1}^{\infty}\frac{T_i}{t_i}
\end{equation}
and it follows by (\ref{eq:tbd21}) that $J$ is thin. Thus,
\begin{equation}\label{cienki}
\text{any primitive subset of $J$ is taut.}
\end{equation}
Let us denote $\beta:=1-\overline{d}(\cM_{J})$.
By induction on $i$, we construct sets $\mathscr{E}_i\subseteq [t_i,t_i+T_i-1]$ satisfying the following conditions:
\begin{enumerate}[(a)]
\item $\mathscr{E}_{i+1}\subseteq \cF_{\mathscr{E}_1\cup\ldots \cup \mathscr{E}_{i}}$,\label{cond:a}
\item $|\mathscr{E}_i|\ge \beta T_i$\label{cond:b}
\end{enumerate}
for every $i\in\N$. We set $\mathscr{E}_1=[t_1,t_1+T_1-1]$. Suppose that $\mathscr{E}_1,\ldots,\mathscr{E}_i$ have been constructed. Since the length of the block $[t_{i+1},t_{i+1}+T_{i+1}-1]$ is divisible by the least common multiple of $\mathscr{E}_1\cup\ldots\cup\mathscr{E}_i$ (by (\ref{eq:tbd11})), we have
\begin{equation}
|\cM_{\mathscr{E}_1\cup\ldots \cup\mathscr{E}_{i}}\cap [t_{i+1},t_{i+1}+T_{i+1}-1]|=T_{i+1}\cdot d(\cM_{\mathscr{E}_1\cup\ldots \cup\mathscr{E}_{i}})\le T_{i+1}\cdot\overline{d}(\cM_{J})=T_{i+1}(1-\beta).
\end{equation}
We set $\mathscr{E}_{i+1}:=[t_{i+1},t_{i+1}+T_{i+1}-1]\setminus \cM_{\mathscr{E}_1\cup\ldots \cup\mathscr{E}_{i}}$. As $\frac{t_i+T_i}{t_i}<2$ for $i\in \N$, it follows that each $\mathscr{E}_{i}$ is primitive.

Let $\mathscr{E}:=\bigcup_{i=1}^{\infty}\mathscr{E}_i$. By the primitivity of $\mathscr{E}_i$ and by condition~\eqref{cond:a}, $\mathscr{E}$ is primitive. Since $\mathscr{E}\subseteq J$, it follows by (\ref{cienki}) that $\mathscr{E}$ is taut. Moreover, by condition~\eqref{cond:b}:
\begin{equation}\label{eq:tbd4}
d^*(\mathscr{E})\ge \beta.
\end{equation}

Let $K_i > \max (\mathscr{E}_1\cup\dots\cup\mathscr{E}_i)$, take any $\emptyset \neq\mathscr{A}_i\subseteq [K_i,\infty)$ and let $\mathscr{B}:=\bigcup_{\geq 1}\mathscr{E}_i\mathscr{A}_i$. Then
\begin{equation*}
\mathscr{E}_1\cup\dots\cup\mathscr{E}_i \subseteq \mathcal{F}_{\mathscr{A}_i}
\end{equation*}
Notice that $\mathscr{E} \cap \mathcal{M}_{\mathscr{B}}=\emptyset$, so as $\mathscr{E}$ is primitive, $\mathscr{E}\subseteq \mathcal{M}_\mathscr{E}\setminus \mathcal{M}_\mathscr{B}$. Therefore, by~\eqref{eq:tbd4} we have $d^*(\mathcal{M}_\mathscr{E}\setminus \mathcal{M}_\mathscr{B})\geq \beta>0$. If we assume additionally that $\mathscr{A}_i$ are Behrend sets then it follows by Theorem~\ref{pr:9_cor} that $\mathscr{B}'=\mathscr{E}$.
%
%
\end{proof}

\section{Besicovitch detautification and deminimisation}\label{se4}
In this section we will prove Theorem~\ref{jjj} together with Corollary~\ref{n1} and Corollary~\ref{n2}. We need to prepare first some tools.

\paragraph{Mertens' theorems}
Mertens~\cite{Mertens1874} proved the following results on prime numbers:
\begin{align*}
\sum_{p\leq x}\frac{\ln p}{p}&=\ln x+\textrm{O}(1),\\
\sum_{p\leq x}\frac{1}{p}&=\ln \ln x+M+\textrm{O}\left(\frac{1}{\ln x}\right),\\
\prod_{p\leq x}\left(1-\frac{1}{p}\right)&=\frac{e^{-\gamma}}{\ln x}\left(1+\textrm{o}(x)\right)
\end{align*}
Here $M$  stands for the Meissel-Mertens constant and $\gamma$ is the Euler constant. They have versions for primes in arithmetic progressions.
Williams~\cite[Theorem 1]{MR0364137} generalized the third and, as a consequence, also the first of these formulas. However, we will need a generalization of the second formula, also obtainable from Williams' formula for
\begin{equation}\label{williams}
\prod_{\PP\ni p\le x, p\equiv l\, mod\, k}\left(1-\frac{1}{p}\right),
\end{equation}
for  any coprime $k,l$ (\cite[Theorem 1]{MR0364137}). Namely, the following holds:
\begin{equation}\label{eq:Prachar}
\sum_{\PP\ni p\le x, p\equiv l\, mod\, k}\frac{1}{p}=\frac{1}{\varphi(k)}\ln\ln x + B_{k,l} + \textrm{O}\left(\frac{1}{\ln x}\right)
\end{equation}
for any coprime $k,l$, where $\varphi$ denotes the Euler totient function, $B_{k,l}$ is a constant depending on $k$ and $l$ and the implied constant depends only on $k$. Calculations how to pass from the formula for~\eqref{williams} to~\eqref{eq:Prachar} can be found in~\cite[formula (6)]{MR4750804} (together with more details on the constants $B_{k,l}$ which we do not need here), see also the earlier work \cite{MR2351662}.

\begin{proof}[Proof of Theorem~\ref{jjj}]

Let $\mathscr{E}=\bigcup_{i\geq 1}\{e_i\}$ and let $\emptyset \neq \mathscr{A}_i\subseteq \mathcal{P}_i\cap [K_i,\infty)$, where $e_i$ and $K_i$ will be chosen later. We will use Theorem~\ref{besicovitch} to show that   $\mathscr{B}$ is Besicovitch. To this end, notice that for any $x\geq 1$,  we have
\begin{align}
\begin{split}\label{eq:bound2}
\sum_{a\in \mathscr{B} \cap(x^{1-\varepsilon}, x]}|[1,x]\cap a\Z\cap \mathcal{F}_{\mathscr{B}\cap [1,a)}|
&\le \sum_{a\in \mathscr{B} \cap(x^{1-\varepsilon}, x]}|[1,x]\cap a\Z|\\
&\le \sum_{i=1}^{\infty}\sum_{p\in \mathscr{A}_i\cap (x^{1-\varepsilon}/e_i,x/e_i]}|[1,x]\cap e_ip\Z|
\end{split}
\end{align}
By the obvious estimate $|[1,x]\cap a\Z|\le \frac{x}{a}$, we have
\begin{equation}\label{eq:estim1}
\sum_{p\in \mathscr{A}_i\cap (x^{1-\varepsilon}/e_i,x/e_i]}|[1,x]\cap e_ip\Z|\le x\left(\sum_{p\in \mathscr{A}_i\cap(x^{1-\varepsilon}/e_i, x/e_i]}\frac{1}{e_i p}\right)=\frac{x}{e_i}g_{i,\varepsilon}(x)\le xg_{i,\varepsilon}(x),
\end{equation}
where
\[
g_{i,\varepsilon}(x):=\sum_{p\in \mathscr{A}_i\cap\left(\frac{x^{1-\varepsilon}}{e_i},\frac{x}{e_i}\right]}\frac{1}{p}.
\]
for every $\varepsilon>0$ and $x\geq 1$.

From now on, we will assume that $\varepsilon\in (0,\nicefrac12]$. Since $\varphi(2^{i+1})=2^i$, it follows from~\eqref{eq:Prachar} that
\begin{equation}\label{eq:between}
g_{i,\varepsilon}(x)\le \frac{1}{2^i}\left(\ln\ln\frac{x}{e_i}-\ln\ln\frac{x^{1-\varepsilon}}{e_i}\right)+\textrm{O}\left(\frac{1}{\ln x}\right).
\end{equation}
The constant in $\textrm{O}(\frac{1}{\ln x})$ depends on $i$, but not on the choice of $K_i$ and not on $\varepsilon$.

We will show that
\begin{equation}\label{eq:nierownosc}
\ln\ln \frac{x}{e_i}-\ln\ln \frac{x^{1-\varepsilon}}{e_i}\leq 2\varepsilon(1+o(1)),
\end{equation}
where $\rm{o}(1)$ depends on $i$. Indeed, notice that
\[\ln\ln \frac{x}{e_i}-\ln\ln \frac{x^{1-\varepsilon}}{e_i}=\ln (y-s_i)-\ln(1-\varepsilon)-\ln\left(y-\frac{1}{1-\varepsilon}s_i\right)=\ln\left(1+\frac{\frac{\varepsilon}{1-\varepsilon}s_i}{y-\frac{1}{1-\varepsilon}s_i}\right)-\ln(1-\varepsilon),
\] where $y=\ln x$ and $s_i=\ln e_i$. Moreover,
\[
\ln\left(1+\frac{\frac{\varepsilon}{1-\varepsilon}s_i}{y-\frac{1}{1-\varepsilon}s_i}\right)\leq \varepsilon\frac{\frac{1}{1-\varepsilon}s_i}{y-\frac{1}{1-\varepsilon}s_i}\leq \varepsilon\frac{2s_i}{y-2s_i}=\varepsilon o(1),
\]
as $x\rightarrow\infty$, where $o(1)$ depends on $i$ and the first inequality is a consequence of the inequality $\ln(1+h)\leq h$ and the second one holds by $\varepsilon\in (0,\nicefrac{1}{2}]$. To finish the proof of~\eqref{eq:nierownosc}, it remains to notice that by the expansion $-\ln(1-h)=h+\frac{h^2}{2}+\frac{h^3}{3}+\ldots$, we have the following inequality
\begin{equation}\label{eq:star}
-2h<\ln(1-h)<-h
\end{equation}
valid for $0<h<\frac{1}{2}$.

Since $0<\varepsilon<\frac{1}{2}$, then, by \eqref{eq:between} and \eqref{eq:nierownosc}
\begin{equation}\label{eq:nierownosc2}
g_{i,\varepsilon}(x)\le \frac{1}{2^{i-1}}\varepsilon(1+o(1)),
\end{equation}
as $x\rightarrow \infty$, where the term $o(1)$  depends on $i$ but neither on $K_i$ nor on $\varepsilon$. Take $\varepsilon=\frac{1}{2^i}$. Then
\[
g_{i,\frac{1}{2^i}}(x)\leq \frac{1}{4^{i-1}}\cdot\frac{1}{2}(1+\textrm{o}(1)).
\]
It follows that there exists $x_i\geq 1$ such that for $x\geq x_i$ we have
\begin{equation}\label{eq:nierownosc3}
g_{i,\frac{1}{2^i}}(x)\leq \frac{1}{4^{i-1}}.
\end{equation}
If we assume that $K_i\geq \frac{x_i}{e_i}$ then $g_{i,\frac{1}{2^i}}(x)=0$ for $x<x_i$ by the definition of $g_{i,\frac{1}{2^i}}$ as then $\mathscr{A}_i\cap \left(\frac{x^{1-\varepsilon}}{e_i},\frac{x}{e_i} \right]=\emptyset$. So,~\eqref{eq:nierownosc3} holds for any $x$.

From now on, we fix $K_i> \max\left\{e_i,\frac{x_i}{e_i}\right\}$.
Notice that
$$
\left(\frac{x^{1-\varepsilon}}{e_i},\frac{x}{e_i}\right]\subseteq\bigcup_{j=0}^{\ell} \left(\frac{x^{(1-\varepsilon')^{j+1}}}{e_i},\frac{x^{(1-\varepsilon')^{j}}}{e_i}\right],
$$
provided $(1-\varepsilon')^{\ell+1}\le 1-\varepsilon$ (equivalently:  $\ell+1\ge\log_{1-\varepsilon'}(1-\varepsilon)$). So, by the definition of $g_{i,\varepsilon}$, we get
\begin{equation}\label{eq:nierownosc5}
g_{i,\varepsilon}(x)\le \sum_{j=0}^{[\log_{1-\varepsilon'}(1-\varepsilon)]} g_{i,\varepsilon'}(x^{(1-\varepsilon')^j}).
\end{equation}
Note also that by (\ref{eq:star})
\begin{equation}\label{eq:star2}
\log_{1-\varepsilon'}(1-\varepsilon)=\frac{-\ln(1-\varepsilon)}{-\ln(1-\varepsilon')}\le 2\frac{\varepsilon}{\varepsilon'}
\end{equation}
provided $\varepsilon\in (0,\nicefrac{1}{2}]$.

Fix $\varepsilon\in (0,\frac{1}{2})$ and let $i_0$ be such that $\frac{1}{2^{i_0+1}}<\varepsilon\le \frac{1}{2^{i_0}}$.
By \eqref{eq:estim1} and \eqref{eq:nierownosc2},
\begin{equation}\label{eq:final1}
\limsup_{x\rightarrow\infty}\frac{1}{x}\left(\sum_{i=1}^{i_0}\sum_{p\in \mathscr{A}_i\cap (x^{1-\varepsilon}/e_i,x/e_i]}|[1,x]\cap e_ip\Z|\right)\le \sum_{i=1}^{i_0}\frac{1}{2^{i-1}}\varepsilon\le 2\varepsilon.
\end{equation}
Moreover, by \eqref{eq:estim1}, \eqref{eq:nierownosc5}, \eqref{eq:nierownosc3}\footnote{We recall that \eqref{eq:nierownosc3} holds for every $x$.} and \eqref{eq:star2} (for $\varepsilon'=\frac{1}{2^i}$)
\begin{multline}\label{eq:final2}
\sum_{i=i_0+1}^{\infty}\sum_{p\in \mathscr{A}_i\cap (x^{1-\varepsilon}/e_i,x/e_i]}|[1,x]\cap e_ip\Z|\le x\sum_{i=i_0+1}^{\infty}g_{i,\varepsilon}(x)\\
\leq x\sum_{i=i_0+1}^{\infty}\sum_{j=0}^{[\log_{1-\frac{1}{2^i}}(1-\varepsilon)]} g_{i,\frac{1}{2^i}}(x^{(1-\frac{1}{2^i})^j})\le  x\sum_{i=i_0+1}^{\infty}([\log_{1-\frac{1}{2^i}}(1-\varepsilon)]+1) \frac{1}{4^{i-1}} \\
\leq x\sum_{i=i_0+1}^{\infty}2^{i+2}\varepsilon\frac{1}{4^{i-1}}=x\sum_{i=i_0+1}^{\infty}\frac{8\varepsilon}{2^{i-1}}\le 8x\varepsilon
\end{multline}
(to justify the 4th inequality we use also the fact that $2^{i+1}\varepsilon\ge 1$ for $i\ge i_0$).
It follows by \eqref{eq:bound2}, \eqref{eq:final1} and \eqref{eq:final2} that
$$
\limsup_{x\to\infty}\frac{1}{x}\sum_{a\in \mathscr{B} \cap(x^{1-\varepsilon}, x]}|[1,x]\cap a\Z\cap \mathcal{F}_{\mathscr{B}\cap [1,a)}|\le 2\varepsilon+8\varepsilon=10\varepsilon.
$$
Therefore,
$$
\lim\limits_{\varepsilon\rightarrow 0}\limsup_{x\to\infty}\sum_{a\in \mathscr{B} \cap(x^{1-\varepsilon}, x]}|[1,x]\cap a\Z\cap \mathcal{F}_{\mathscr{B}\cap [1,a)}|=0.
$$
We conclude, using Theorem~\ref{besicovitch}, that $\mathscr{B}$ is indeed Besicovitch.

Note that since $K_i>e_i$, $i\geq1$, it follows that $\mathscr{B}$ is primitive. Indeed, suppose that $e_ip\mid e_j q$ for some $p\in\mathscr{A}_i$ and $q\in\mathscr{A}_j$. If $i=j$, then $p=q$ since $p,q$ are primes. So $e_ip=e_j q$. If $i\neq j$ then $p\mid e_j$ because $\mathscr{A}_i$ and $\mathscr{A}_j$ are disjoint. Since
$p>K_i>e_i$, we have $e_i<e_j$. Notice that $e_i\nmid e_j$, as $\mathscr{E}$ is primitive. So $q\mid e_i$. Hence $q\leq e_i$ but $q>K_j>e_j>e_i$. We obtain a contradiction.
\end{proof}

\begin{proof}[Proof of Corollary~\ref{n1}]
This is a straightforward consequence of Theorem~\ref{pr:9_cor}.
\end{proof}
\begin{proof}[Proof of Corollary~\ref{n2}]
It suffices to notice that if each $\mathscr{A}_i$ is infinite pairwise coprime then, by Theorem~\ref{wersja2_cor1}, we have $\mathscr{B}^*=\mathscr{E}$. Moreover, if $\bigcup_{i\geq 1}\mathscr{A}_i$ is thin then $\mathscr{B}$ is also thin (the corresponding sum of reciprocals is even smaller). We can assume without loss of generality that $\mathscr{B}$ is primitive (if it is not true, then we replace $\mathscr{B}$ with $\mathscr{B}^{prim}$). It follows immediately that $\mathscr{B}$ is taut.
\end{proof}

\section{Realizable triples}\label{triples}
In this section we will show that all triples $ijk\in\{0,1\}^3$ such that $ij\neq 01$ are realizable. In Section~\ref{podstawowe}, we recall the results mentioned in the introduction and we provide examples covering the remaining cases. Section~\ref{advanced} is devoted to delivering relevant examples with $\mathscr{B},\mathscr{B}',\mathscr{B}^*$ that are pairwise distinct.

\subsection{Basic examples}\label{podstawowe}
As noted in the introduction, we have the following:
\begin{itemize}
\item
Besicovitch's original examples yield $ijk=001$ with $\mathscr{B}'\neq \mathscr{B}^*=\{1\}$ (it is unclear whether $\mathscr{B}$ and $\mathscr{B}'$ are equal but one can consider $\mathscr{B}'$ in place of $\mathscr{B}$ and thus get $\mathscr{B}=\mathscr{B}'\neq \mathscr{B}^*=\{1\}$),
\item
Keller's example from~\cite{IrK} yields $ijk=000$ (here $\mathscr{B}=\mathscr{B}'=\mathscr{B}^*$),
\item
any Erd\H{o}s set $\mathscr{B}$ yields $ijk=111$ with $\mathscr{B}=\mathscr{B}'\neq \mathscr{B}^*=\{1\}$.
\end{itemize}
\begin{example}[$ijk=101$]\label{ex101}
Consider Besicovitch's original example of a non-Besicovich set $\mathscr{E}$. Now, it suffices to use Corollary~\ref{n1} (with $\mathscr{E}$ replaced by $\mathscr{E}'$) to find $\mathscr{B}$ that is Besicovitch and satisfies $\mathscr{B}'=\mathscr{E}'$. Since $\mathscr{B}^*=(\mathscr{B}')^*=(\mathscr{E}')^*=\mathscr{E}^*=\{1\}$, clearly $\mathscr{B},\mathscr{B}',\mathscr{B}^*$ are pairwise distinct.
\end{example}
\begin{example}[$ijk=100$]\label{b1}
Let $\mathscr{E}$ be a non-Besicovitch minimal set as in~\cite{IrK}. By Corollary~\ref{n1}, there exsists a Besicovitch set $\mathscr{B}$ such that $\mathscr{B}'=\mathscr{E}$. It follows that we have a Besicovitch set $\mathscr{B}$ with $\mathscr{B}'=\mathscr{B}^*$ non-Besicovitch.
\end{example}
\begin{example}[$ijk=110$]\label{b2}
Let $\mathscr{E}$ be again a non-Besicovitch minimal set as in~\cite{IrK}. By Corollary~\ref{n2}, there exists a Besicovitch taut set $\mathscr{B}$ such that $\mathscr{B}^*=\mathscr{E}$. It follows that we have $\mathscr{B}=\mathscr{B}'$ Besicovitch with $\mathscr{B}^*$ non-Besicovitch.
\end{example}

\subsection{Advanced examples}\label{advanced}
We will start this by an easy argument to show that $ijk=111$ can be realized with pairwise distinct $\mathscr{B},\mathscr{B}',\mathscr{B}^*$.

\begin{example}[$ijk=111$ with $\mathscr{B}\neq\mathscr{B}'\neq \mathscr{B}^*$]\label{4.1}
Take $\mathscr{B}=\mathscr{A}\cup 2\mathcal{P}$, where $\mathscr{A}\subseteq 2\N+1$ is an Erd\H{o}s set. By Remark~\ref{uwaga31}~\eqref{union_Besicovitch}, $\mathscr{B}$ is Besicovitch as a union of two Besicovitch sets. Notice that $2\in C$ and $\mathscr{A}\cup\{2\}$ is taut, where $C$ is as in~\eqref{skaleC}. Therefore, by Proposition~\ref{pr:9}, we obtain $\mathscr{B}'=\mathscr{A}\cup \{2\}$. It follows that $\mathscr{B}'$ is an Erd\H{o}s set and thus is Besicovitch. Finally, it is not hard to see that we have $\mathscr{B}^*=\{1\}$ (indeed, $1\in D$ as $\mathscr{B}$ contains an Erd\H{o}s set, $D$ is as in \eqref{skaleD}).
\end{example}

In order to provide the remaining examples, we will need a certain modification of the original Besicovitch's construction of non-Besicovitch sets. Let us begin by recalling his original scheme and point out what kind of changes we want to do. The main tool behind it is the following result of Erd\H{o}s~\cite{MR1574239}: $d_T:=d(\mathcal{M}_{(T,2T]})\to 0$ as $T\to\infty$, which implies that for any $T$, there exists $x_0=x_0(T)$ such that for all $x>x_0$, we have
\[
|\mathcal{M}_{(T,2T]}\cap [0,x]|\leq 2d_Tx.
\]
Fix $\vep>0$. Let $T_k\to\infty$ be growing fast enough, so that:
\begin{equation}\label{T_k_condition}
\sum_{k\geq 1}d_{T_k}\leq \varepsilon \text{ and }T_{k+1}>x_0(T_k) \text{ for all }k.
\end{equation}
Let $\mathscr{B}_{(T_k)}:=\bigcup_{k\geq 1}(T_k,2T_k]$. Then
\begin{equation}\label{12}
\overline{d}(\mathcal{M}_{\mathscr{B}_{(T_k)}})\geq \lim_{k\to\infty} \frac{1}{2T_k}|\mathcal{M}_{\mathscr{B}_{(T_k)}} \cap [1,2T_k]|\geq \lim_{k\to\infty}\frac{1}{2T_k}|(T_k,2T_k]|=\frac{1}{2}.
\end{equation}
On the other hand,
\[
|\mathcal{M}_{\mathscr{B}_{(T_k)}}\cap [0,T_k]|\leq \sum_{h<k}|\mathcal{M}_{(T_h,2T_h]}\cap [0,T_k]| \leq \sum_{h<k}2d_{T_h}T_k\leq 2\varepsilon T_k,
\]
where the last two inequalities follow by the choice of $T_k$. 
Thus, $\underline{d}(\mathcal{M}_{\mathscr{B}_{(T_k)}})\leq 2\varepsilon$.
Now, Besicovitch picks $\varepsilon <1/4$ and this is the end of the proof. 

Notice that Besicovitch's example $\mathscr{B}_{(T_k)}$ is not taut because it is not primitive (any interval $(T_k,2T_k]$ contains some power of $2$). It is unknown whether $\mathscr{B}_{(T_k)}^{prim}$ is taut.
\begin{example}[$ijk=001$ with $\mathscr{B}\neq\mathscr{B}'\neq\mathscr{B}^*$]\label{EX1}
Now, consider
\[
\mathscr{B}_{\text{odd}}:=\mathscr{B}_{(T_k)}\cap (2\mathbb{Z}+1).
\]
 We claim that it is non-Besicovitch for $(T_k)$ chosen that satisfies \eqref{T_k_condition} for $\varepsilon<1/8$.
Indeed, clearly \begin{equation}\label{small_odd}
\underline{d}(\mathcal{M}_{\mathscr{B}_{\text{odd}}})\leq \underline{d}(\mathcal{M}_{\mathscr{B}_{(T_k)}})<2\varepsilon.
\end{equation} Now, since $\mathscr{B}_{\text{odd}}$ contains all odd numbers from $(T_k,2T_k]$, we obtain
\[
\frac{1}{2T_k}|\mathcal{M}_{\mathscr{B}_{\text{odd}}}\cap [1,2T_k]|\geq\frac{1}{2T_k}|(T_k,2T_k]\cap\mathscr{B}_{\text{odd}}|\geq
\frac{T_k}{4T_k}=\frac{1}{4}.
\]
Hence $\overline{d}(\mathcal{M}_{\mathscr{B}_{\text{odd}}})\geq 1/4$. So it suffices to take $\varepsilon <1/8$ to show that $\mathscr{B}_{\text{odd}}$ is non-Besicovitch.
Let us consider
\[
\mathscr{B}:=2\mathcal{P}\cup (\mathscr{B}_{\text{odd}})'.
\]
It is clear that $\mathscr{B}$ is not taut as $2\mathcal{P}\subset \mathscr{B}$ is a rescaling of a Behrend set (see property~\eqref{Rk:e} in Remark~\ref{uwaga31}). Since $\mathscr{B}_{\text{odd}}\subset2\Z+1$ and $(\mathscr{B}_{\text{odd}})'$ contains only factors of elements from $\mathscr{B}_{\text{odd}}$ (see the definition of tautification and \eqref{skaleC}), we have $(\mathscr{B}_{\text{odd}})'\subset2\Z+1$. Hence, $\{2\}\cup(\mathscr{B}_{\text{odd}})'$ is a primitive union of two taut sets, so in view of Remark~\ref{uwaga31}~\eqref{Rk:g} it is taut. It follows by Proposition \ref{pr:9} that
\begin{equation}\label{z:postac}
\mathscr{B}'=\{2\}\cup (\mathscr{B}_{\text{odd}})'.
\end{equation}
Since for any $T\geq2$, each set $(T,2T]$ contains an odd prime by Bertrand's postulate, it follows that $\mathscr{B}$ contains an infinite coprime subset. Thus, $\mathscr{B}^*=\{1\}$ as $1\in D$ (recall~\eqref{skaleD}). Now, one can easily see that $\mathscr{B}\neq \mathscr{B}'\neq\mathscr{B}^*$ (indeed, $2\not\in \mathscr{B}\cup\mathscr{B}^*$ while $2\in\mathscr{B}'$).
It remains to show that both $\mathscr{B}$ and $\mathscr{B}'$ are non-Besicovitch. We will begin by showing it for $\mathscr{B}'$.
Notice that for any $N\geq1$ we have
\begin{equation}\label{B'splitted}
\frac{1}{N}|\mathcal{M}_{\mathscr{B}'}\cap [1,N]|=\frac{1}{N}|2\Z\cap[1,N]|+\frac{1}{N}|(\mathcal{M}_{(\mathscr{B}_{\text{odd}})'}\cap [1,N]\cap(2\Z+1)|.
\end{equation}
Since $\mathcal{M}_{\mathscr{B}_{\text{odd}}}\subset\mathcal{M}_{(\mathscr{B}_{\text{odd}})'}$, by \eqref{B'splitted}, for any $k\geq1$ we get
\[
\frac{1}{2T_k}|\mathcal{M}_{\mathscr{B}'}\cap [1,2T_k]|\geq
\frac{1}{2}+\frac{1}{2T_k}|[1,2T_k]\cap\mathcal{M}_{\mathscr{B}_{\text{odd}}}\cap(2\Z+1)|.\]
Now, since any odd number from $(T_k,2T_k]$ is contained in $\mathscr{B}_{\text{odd}}$ and $\mathcal{M}_{\mathscr{B}_{\text{odd}}}\subset\mathcal{M}_{(\mathscr{B}_{\text{odd}})'}$, we obtain
\[\frac{1}{2T_k}|\mathcal{M}_{\mathscr{B}'}\cap [1,2T_k]|\geq\frac{1}{2}+\frac{1}{2T_k}|(T_k,2T_k]\cap\mathcal{M}_{\mathscr{B}_{\text{odd}}}\cap(2\Z+1)|\geq
\frac{1}{2}+\frac{T_k}{4T_k}=\frac{3}{4}.\]
Finally, we have \[
\overline{d}(\mathcal{M}_{\mathscr{B}'})\geq\frac{3}{4}.\]
On the other hand, 
by \eqref{B'splitted} we have \[
\frac{1}{N}|\mathcal{M}_{\mathscr{B}'}\cap[1,N]|\leq \frac{1}{2}+\frac{1}{N}|\mathcal{M}_{(\mathscr{B}_{\text{odd}})'}\cap[1,N]|.\]
Hence
\[\underline{d}(\mathcal{M}_{\mathscr{B}'})\leq\frac{1}{2}+\underline{d}(\mathcal{M}_{(\mathscr{B}_{\text{odd}})'})=\frac{1}{2}+\underline{d}(\mathcal{M}_{\mathscr{B}_{\text{odd}}}),\]
where the last equality follows from $\underline{d}(\mathcal{M}_{(\mathscr{B}_{\text{odd}})'})=\underline{d}(\mathcal{M}_{\mathscr{B}_{\text{odd}}})$, see \eqref{density_difference}.
By \eqref{small_odd}, we get
\[\underline{d}(\mathcal{M}_{\mathscr{B}'})<\frac{1}{2}+2\varepsilon.\]
So it suffices to take $\varepsilon <1/8$, to obtain $\mathscr{B}'$ is non-Besicovitch. Since $\mathcal{M}_\mathscr{B}$ and $\mathcal{M}_{\mathscr{B}'}$ differ by a finite set $\{\pm 2\}$ and $\mathscr{B}'$ is non-Besicovitch, the set $\mathscr{B}$ is non-Besicovitch.
\end{example}
Recall that Keller in~\cite{IrK} modified Besicovitch's construction to deliver a non-Besicovitch minimal set $\mathscr{B}_{\text{Kel}}^{prim}\subseteq\mathscr{B}_{(T_k)}$ such that $\overline{d}(\mathcal{M}_{\mathscr{B}_{\text{Kel}}})>\frac{1}{2}-\varepsilon$ and $\underline{d}(\mathcal{M}_{\mathscr{B}_{\text{Kel}}})<\varepsilon$. He constructed $\mathscr{B}_{\text{Kel}}$ using an inductive procedure that there exists a sequence $(n_j)$ and finite sets $P_j\subset\mathcal{P}$, $j\geq1$ such that
\begin{enumerate}[(I)]
\item $\mathscr{B}_{\text{Kel}}^{prim}$ is minimal, where $\mathscr{B}_{\text{Kel}}=\bigcup_{j\geq1}[T_{n_j},2T_{n_j})\setminus\bigcup_{j\geq1}j\mathcal{F}_{P_j}^{(1)}$ and $\mathcal{F}_{P_j}^{(1)}=\mathcal{F}_{P_j}\setminus\{1\}$,\label{Keller1}
\item $\left|\bigcup_{i\geq1}i\mathcal{F}_{P_i}^{(1)}\cap\left[T_{n_j}, 2T_{n_j}\right)\right| \leqslant \varepsilon T_{n_j}$ for any $j\geq1$. \label{Keller2}
\end{enumerate}
\begin{remark}\label{keller_ex_taut}
Notice that $\mathscr{B}_{\text{Kel}}^{prim}$ is taut since it is minimal, see Remark~\ref{basic_examples}~\eqref{minimal_taut}.
\end{remark}
\begin{example}[$ijk=100$ with $\mathscr{B}\neq\mathscr{B}'\neq\mathscr{B}^*$]\label{EX2}
Let
\[
\mathscr{B}_{\text{odd}}=\mathscr{B}_{\text{Kel}}^{prim}\cap(2\Z+1)
\]
and
\[
\mathscr{C}=\mathscr{B}_{\text{odd}}\cup\{2p^2: p\in\mathcal{P}\}.
\]
Since all elements of $\mathscr{B}_{\text{odd}}$ are odd, we have $2p^2\ndivides b$ for any $b\in\mathscr{B}_{\text{odd}}$ and any $p\in\mathcal{P}$. So \[\mathscr{B}_{\text{odd}}\subset\mathscr{C}^{prim}.\] We claim that $\mathscr{C}^{prim}$ is not minimal. Indeed, suppose that $\mathscr{C}^{prim}$ is minimal. By Remark~\ref{uwaga31}~\eqref{Rk:f_min}, $\{2p^2: p\in\mathcal{P}\}\cap\mathscr{C}^{prim}$ is finite. Then we have \[p\in\mathscr{B}_{\text{odd}}\text{ or }p^2\in\mathscr{B}_{\text{odd}}\] for almost all primes $p\in\mathcal{P}$. So $\mathscr{B}_{\text{odd}}$ contains infinitely many pairwise coprime numbers. So by Remark~\ref{uwaga31}~\eqref{Rk:e_min}, $\mathscr{B}_{\text{odd}}$ is not minimal. But $\mathscr{B}_{\text{odd}}$ is minimal as a subset of a minimal set $\mathscr{B}_{\text{Kel}}^{prim}$, see~Remark~\ref{uwaga31}~\eqref{Rk:f_min}. We get a contradiction. So $\mathscr{C}^{prim}$ contains a rescaled copy of an infinite pairwise coprime set. Hence by Remark~\ref{uwaga31}~\eqref{Rk:e_min}, $\mathscr{C}^{prim}$ is not minimal. Nevertheless, we will show now that $\mathscr{C}^{prim}$ is taut. Indeed, by Remark~\ref{uwaga31}~\eqref{Rk:f}, $\mathscr{C}^{prim}\setminus\mathscr{B}_{\text{odd}}$ is taut as a subset of a taut set $\{2p^2: p\in\mathcal{P}\}$ (see Remark~\ref{basic_examples}~\eqref{Re:B}). 
Moreover, $\mathscr{B}_{\text{odd}}$ is taut as a minimal set (Remark~\ref{basic_examples}~\eqref{minimal_taut}). By the primitivity of $\mathscr{C}^{prim}=(\mathscr{C}^{prim}\setminus\mathscr{B}_{\text{odd}})\cup\mathscr{B}_{\text{odd}}$ and Remark~\ref{uwaga31}~\eqref{Rk:g}, we conclude that $\mathscr{C}^{prim}$ is indeed taut.

We claim
\[
\mathscr{C}^*=\mathscr{B}_{\text{odd}}\cup\{2\}.
\]
Since $\mathscr{B}_{\text{odd}}$ and $\{2\}$ are minimal and $\mathscr{B}_{\text{odd}}\subset2\Z+1$, $\mathscr{B}_{\text{odd}}\cup\{2\}$ is a primitive union of two minimal sets. Hence by Remark~\ref{uwaga31}~\eqref{Rk:i}, $\mathscr{B}_{\text{odd}}\cup\{2\}$ is minimal. So as $\{2p^2\ : \ p\in\mathcal{P}\}\cap\mathscr{C}^{prim}$ is infinite, by Proposition~\ref{wersja2}, we have $(\mathscr{C}^{prim})^*=\mathscr{B}_{\text{odd}}\cup\{2\}$ (we use the fact that $\mathscr{C}^*=(\mathscr{C}^{prim})^*$). Now, we will show that $\mathscr{C}^*$ is non-Besicovitch. By the corresponding splitting for $\mathcal{M}_{\mathscr{C}^{*}}$ as in \eqref{B'splitted}, for any $j\geq1$ we get
\[
\frac{1}{2T_{n_j}}|\mathcal{M}_{\mathscr{C}^*}\cap [1,2T_{n_j}]|\geq
\frac{1}{2}+\frac{1}{2T_{n_j}}|[1,2T_{n_j}]\cap\mathcal{M}_{\mathscr{B}_{\text{odd}}}\cap(2\Z+1)|.\]
Now, by \eqref{Keller1} and \eqref{Keller2}, we obtain
\begin{equation}\label{B_odd}
\frac{1}{2T_{n_j}}|\mathcal{M}_{\mathscr{B}_{\text{odd}}}\cap [1,2T_{n_j}]\cap(2\Z+1)|\geq\frac{1}{2T_{n_j}}|[T_{n_j},2T_{n_j})\cap\mathscr{B}_{\text{odd}}|\geq
\frac{T_{n_j}}{4T_{n_j}}-\varepsilon=\frac{1}{4}-\varepsilon.
\end{equation}
Hence $\overline{d}(\mathcal{M}_{\mathscr{C}^*})\geq\frac{1}{2}+\overline{d}(\mathcal{M}_{\mathscr{B}_{\text{odd}}})\geq \frac{3}{4}-\varepsilon$. On the other hand, 
again by the corresponding splitting for $\mathcal{M}_{\mathscr{C}^{*}}$ as in \eqref{B'splitted}, we have \[
\frac{1}{N}|\mathcal{M}_{\mathscr{C}^*}\cap[1,N]|\leq \frac{1}{2}+\frac{1}{N}|\mathcal{M}_{\mathscr{B}_{\text{odd}}}\cap[1,N]|.\]
Hence
\[\underline{d}(\mathcal{M}_{\mathscr{C}^*})\leq\frac{1}{2}+\underline{d}(\mathcal{M}_{\mathscr{B}_{\text{odd}}})\leq\frac{1}{2}+\underline{d}(\mathcal{M}_{\mathscr{B}_{\text{Kel}}}).\]
So $\underline{d}(\mathcal{M}_{\mathscr{C}^*})<\frac{1}{2}+\varepsilon$. If $\varepsilon<1/8$ then $\mathscr{C}^*$ is non-Besicovitch. 
By Theorem \ref{taut_bes}, there exists a Besicovitch set $\mathscr{B}$ such that $\mathscr{B}'=\mathscr{C}^{prim}$. Notice that $\mathscr{C}^*=(\mathscr{C}^{prim})^*=(\mathscr{B}')^*=\mathscr{B}^*$. Since $\mathscr{B}^*$ is minimal and $\mathscr{C}^{prim}$ is not minimal, we have $\mathscr{B}^*\neq\mathscr{C}^{prim}$. 
 So it remains to show that $\mathscr{C}$ is non-Besicovitch (it is equivalent that $\mathscr{C}^{prim}$ is non-Besicovitch because $\mathscr{C}$ and $\mathscr{C}^{prim}$ have the same sets of multiples). Notice that for any $N\geq1$ we have
\begin{equation}\label{Csplitted}
\frac{1}{N}|\mathcal{M}_{\mathscr{C}}\cap [1,N]|\geq\frac{1}{N}|\mathcal{M}_{\{2p^2: p\in\mathcal{P}\}}\cap[1,N]|+\frac{1}{N}|\mathcal{M}_{\mathscr{B}_{\text{odd}}}\cap [1,N]\cap(2\Z+1)|.
\end{equation}
Since $d(\mathcal{F}_{\{p^2\ :\ p\in\mathcal{P} \}})=\frac{6}{\pi^2}$, for sufficiently large $k\geq1$ we have
\begin{align*}
&\frac{1}{2T_{n_k}}|\mathcal{M}_\mathscr{C} \cap [1,2T_{n_k}]|
\geq \frac{1}{2}\left(1-\frac{6}{\pi^2}\right)-\varepsilon+\frac{1}{2T_{n_k}}|\mathcal{M}_{\mathscr{B}_{\text{odd}}}\cap [T_{n_k},2T_{n_k})\cap (2\Z+1)|\\
&\geq\frac{1}{2}-\frac{3}{\pi^2}-\varepsilon+\frac{T_{n_k}}{4T_{n_k}}-\varepsilon=\frac{3}{4}-\frac{3}{\pi^2}-2\varepsilon,
\end{align*}
where the last inequality follows from \eqref{Keller2}.
Hence \[\overline{d}(\mathcal{M}_{\mathscr{C}})\geq\frac{3}{4}-\frac{3}{\pi^2}-2\varepsilon.\]
On the other hand, for sufficiently large $N\geq1$ we get
\begin{align*}
&\frac{1}{N}|\mathcal{M}_{\mathscr{C}}\cap [1,N]|\leq \frac{1}{N}|[1,N]\cap\mathcal{M}_{\{2p^2:p\in\mathcal{P}\}}|+\frac{1}{N}|[1,N]\cap\mathcal{M}_{\mathscr{B}_{\text{odd}}}|\\
&\leq\frac{1}{2}-\frac{3}{\pi^2}+\varepsilon+\frac{1}{N}|[1,N]\cap\mathcal{M}_{\mathscr{B}_{\text{odd}}}|.
\end{align*}
So
\begin{align*}
&\underline{d}(\mathcal{M}_\mathscr{C})\leq\frac{1}{2}-\frac{3}{\pi^2}+\varepsilon+\underline{d}(\mathcal{M}_{\mathscr{B}_{\text{odd}}})\leq\frac{1}{2}-\frac{3}{\pi^2}+\varepsilon+\underline{d}(\mathcal{M}_{\mathscr{B}_{\text{Kel}}^{prim}})\\
&=\frac{1}{2}-\frac{3}{\pi^2}+\varepsilon+\underline{d}(\mathcal{M}_{\mathscr{B}_{\text{Kel}}})<\frac{1}{2}-\frac{3}{\pi^2}+2\varepsilon,
\end{align*}
where the second inequality follows from $\mathscr{B}_\text{odd}\subset\mathscr{B}_{\text{Kel}}^{prim}\subset\mathscr{B}_{\text{Kel}}$ 
and $\mathcal{M}_{\mathscr{B}_{\text{Kel}}^{prim}}=\mathcal{M}_{\mathscr{B}_{\text{Kel}}}$.
So it is enough to take $\varepsilon<1/16$ to show that $\mathscr{C}$ is non-Besicovitch.
The example satisfies all required conditions.
\end{example}
\begin{example}[$ijk=110$ with $\mathscr{B}\neq\mathscr{B}'$] \label{110}
Consider
\[
\mathscr{B}_{\text{odd}}=\mathscr{B}_{\text{Kel}}^{prim}\cap(2\Z+1).
\]
Since $\mathscr{B}_{\text{odd}}\subset\mathscr{B}_{\text{Kel}}^{prim}$ and $\underline{d}(\mathcal{M}_{\mathscr{B}_{\text{Kel}}})<\varepsilon$, by~\eqref{B_odd}, we have
\[\overline{d}(\mathcal{M}_{\mathscr{B}_{\text{odd}}})\geq\frac{1}{4}-\varepsilon\text{ and }\underline{d}(\mathcal{M}_{\mathscr{B}_{\text{odd}}})<\varepsilon.\] So $\mathscr{B}_{\text{odd}}$ is a non-Besicovitch minimal set (it is a subset of minimal set $\mathscr{B}_{\text{Kel}}^{prim}$).
By Theorem~\ref{toeplitz_bes}, there exists a Besicovitch taut set $\mathscr{C}$ such that $\mathscr{C}^*=\mathscr{B}_{\text{odd}}$. The main tool of the proof of Theorem~\ref{toeplitz_bes} is Theorem~\ref{jjj}. The construction used in the proof of the latter one yields $\mathscr{C}$ of the form $\bigcup_{b\in\mathscr{B}_{\text{odd}}}b\mathscr{A}_b$ for some $\mathscr{A}_b\subset (2^{i_b+1}\Z+2^{i_b}+1)\cap\mathcal{P}$, $b\in\mathscr{B}_{\text{odd}}$. So we have $\mathscr{C}\subset2\Z+1$ and any element of $\mathscr{C}$ has at least two prime factors (not necessary distinct). Consider \[\mathscr{B}=\mathscr{C}\cup2\mathcal{P}.\] By above, we obtain that $\mathscr{B}$ is primitive. Since $\mathscr{C}$ and $2\mathcal{P}$ are Besicovitch, by Remark~\ref{uwaga31}~\eqref{union_Besicovitch}, $\mathscr{B}$ is Besicovitch too. We claim that
\[\mathscr{B}'=\mathscr{C}\cup\{2\}\text{ and }\mathscr{B}^*=\mathscr{C}^*\cup\{2\}.\]Since $\mathscr{C}$ and $\{2\}$ are  Besicovitch and taut and $\mathscr{C}\subset2\Z+1$, 
also $\mathscr{C}\cup\{2\}$ is Besicovitch and taut by Remark~\ref{uwaga31}~\eqref{union_Besicovitch} and~\eqref{Rk:g}. Hence by Proposition~\ref{pr:9}, we obtain $\mathscr{B}'=\mathscr{C}\cup\{2\}$. Similarly, by Remark~\ref{uwaga31}~\eqref{Rk:i} and Proposition~\ref{wersja2}, we have $\mathscr{B}^*=\mathscr{C}^*\cup\{2\}$.  
Notice that in Example~\ref{EX2} it is proved that $\mathscr{B}_{\text{odd}}\cup\{2\}$ is non-Besicovitch. The assertion follows.
\end{example}
Finally, we will show how to modify the Keller's example to complete our examples.
\begin{example}[$ijk=000$ with $\mathscr{B}\neq\mathscr{B}'\neq\mathscr{B}^*$]\label{000}
Let $P_p\subset\mathcal{P}$, $p\in\mathcal{P}$, be disjoint sets such that $\sum_{q\in P_p}\frac{1}{q}$ is divergent for any $p\in\mathcal{P}$ and $\mathscr{B}_{\text{odd}}=\mathscr{B}_{\text{Kel}}^{prim}\cap(2\Z+1)$ as in the previous examples. Let us consider \[\mathscr{B}=\mathscr{B}_{\text{odd}}\cup\bigcup_{p\in\mathcal{P}}2p^2P_p.\] Then $\mathscr{B}$ is obviously not taut as any $P_p$ is Behrend, see Remark~\ref{basic_examples}~\eqref{Re:B} and Remark~\ref{uwaga31}~\eqref{Rk:e}. We claim that \[\mathscr{B}'=(\mathscr{B}_{\text{odd}}\cup\{2p^2 : \ p\in\mathcal{P}\})^{prim} \]
and \[\mathscr{B}^*=\mathscr{B}_{\text{odd}}\cup\{2\}.\]
Indeed, by Remark~\ref{uwaga31}~\eqref{Rk:g}, $(\mathscr{B}_{\text{odd}}\cup\{2p^2 : \ p\in\mathcal{P}\})^{prim}$ is taut because any subset of $\{2p^2 : \ p\in\mathcal{P}\}$ and $\mathscr{B}_{\text{odd}}$ are taut. 
So by Proposition~\ref{pr:9}, we get $\mathscr{B}'=(\mathscr{B}_{\text{odd}}\cup\{2p^2 : \ p\in\mathcal{P}\})^{prim}$. Since $\mathscr{B}_{\text{odd}}\subset2\Z+1$ and $\{2\}$ are minimal and $\mathscr{B}_{\text{odd}}\cup\{2\}$ is primitive, by Remark~\ref{uwaga31}~\eqref{Rk:i}, $\mathscr{B}_{\text{odd}}\cup\{2\}$ is minimal too. 
Moreover,
\[
\mathscr{B}^*=(\mathscr{B}')^*=((\mathscr{B}_{\text{odd}}\cup\{2p^2 : \ p\in\mathcal{P}\})^{prim})^*=(\mathscr{B}_{\text{odd}}\cup\{2p^2 : \ p\in\mathcal{P}\})^*
\]
By Proposition~\ref{wersja2}, we have
\[
\mathscr{B}^*=\mathscr{B}_{\text{odd}}\cup\{2\}.
\]
In Example~\ref{EX2} it was showed that $\mathscr{B}^*$ is non-Besicovitch. Now, we need to prove that $\mathscr{B}$ and $\mathscr{B}'$ are non-Besicovitch. We start with $\mathscr{B}$. Notice that
\[\overline{d}(\mathcal{M}_\mathscr{B})\geq\overline{d}(\mathcal{M}_{\mathscr{B}_{\text{odd}}})\geq\frac{1}{4}-2\varepsilon\]
and
\[\underline{d}(\mathcal{M}_{\mathscr{B}})\leq\underline{d}(\mathcal{M}_{\mathscr{B}^*})<2\varepsilon.\]
So it is enough to consider $\varepsilon<1/16$ to get $\mathscr{B}$ is non-Besicovitch. So $\mathscr{B}'$ is non-Besicovitch too, see \eqref{onlyobs}. This completes the proof. 
\end{example}

\bibliographystyle{acm}
\bibliography{densities}
\end{document}